\DeclareMathOperator{\Aut}{Aut}
\let\Im\relax
\DeclareMathOperator{\Im}{Image}
\newcommand{\homeo}{\textnormal{Homeo}^{+}}
\newcommand{\SuspT}{\Sigma_{T}X}
\renewcommand{\phi}{\varphi}
\newcommand{\Inf}{\textnormal{Inf} \hspace{.03in}}
\newcommand{\A}{\mathcal{A}}
\newcommand{\susxi}{\tilde{\xi}}
\newcommand{\widesim}[2][2]{
  \mathrel{\underset{#2}{\scalebox{#1}[1]{$\sim$}}}
}
\newcommand{\asy}{\textnormal{as}(T)}
\theoremstyle{plain} 
\newtheorem{theorem}[equation]{Theorem}
\newtheorem{lemma}[equation]{Lemma}
\newtheorem{proposition}[equation]{Proposition}
\newtheorem{corollary}[equation]{Corollary}
\theoremstyle{remark}
\newtheorem{remark}[equation]{Remark}
\newtheorem{definition}[equation]{Definition}
\newtheorem{example}[equation]{Example}
\numberwithin{equation}{section}
\title{The Mapping Class Group of a Minimal Subshift}
\author{Scott Schmieding \and
Kitty Yang}
\begin{document}
\keywords{mapping class group, minimal subshift, suspension, flow equivalence.}
\subjclass[2010]{Primary 37B10; Secondary 54H20}
\begin{abstract}
For a homeomorphism $T \colon X \to X$ of a Cantor set $X$, the mapping class group $\mathcal{M}(T)$ is the group of isotopy classes of orientation-preserving self-homeomorphisms of the suspension $\Sigma_{T}X$. The group $\mathcal{M}(T)$ can be interpreted as the symmetry group of the system $(X,T)$ with respect to the flow equivalence relation.
We study $\mathcal{M}(T)$, focusing on the case when $(X,T)$ is a minimal subshift. We show that when $(X,T)$ is a subshift associated to a substitution, the group $\mathcal{M}(T)$ is an extension of $\mathbb{Z}$ by a finite group; for a large class of substitutions including Pisot type, this finite group is a quotient of the automorphism group of $(X,T)$. When $(X,T)$ is a minimal subshift of linear complexity satisfying a no-infinitesimals condition, we show that $\mathcal{M}(T)$ is virtually abelian. We also show that when $(X,T)$ is minimal, $\mathcal{M}(T)$ embeds into the Picard group of the crossed product algebra $C(X) \rtimes_{T} \mathbb{Z}$.
\end{abstract}
\maketitle
\tableofcontents
\section{Introduction}
By a Cantor system $(X,T)$ we mean a homeomorphism ${T \colon X \to X}$ where $X$ is a Cantor set; when $T$ is minimal we call $(X,T)$ a minimal Cantor system. The suspension $\Sigma_{T}X$ (sometimes called the mapping torus) comes with a natural $\mathbb{R}$-action whose orbits correspond to the $T$-orbits in $X$, and we let $\homeo \SuspT$ denote the topological group of homeomorphisms of $\Sigma_{T}X$ which respect the orientation of the flow. Maps $f,g \in \homeo \SuspT$ are isotopic if they are connected by a path in $\homeo \SuspT$, and we define the mapping class group $\mathcal{M}(T)$ to be the group of isotopy classes of maps in $\homeo \SuspT$.\\
\indent The purpose of this paper is to study the group $\mathcal{M}(T)$ when $(X,T)$ is a minimal Cantor system. Apart from some general analysis, we focus especially on the case when $(X,T)$ is conjugate to a subshift of low complexity. \\
\indent Cantor systems $(X,T), (Y,S)$ are said to be \emph{flow equivalent} if there is a homeomorphism $h \colon \Sigma_{T}X \to \Sigma_{S}Y$ such that $h$ preserves the orientation of the respective $\mathbb{R}$-orbits. While conjugate systems are flow equivalent, flow equivalence is typically a weaker relation than conjugacy; for example, any system $(X,T)$ is flow equivalent to the return action on a discrete cross section of $(X,T)$. For systems presented as the Vershik map on a simple ordered Bratteli diagram, flow equivalence corresponds to the relation generated by conjugacy and making a finite number of changes to the Bratteli diagram (see \cite[Theorem 3.8]{GPS1}). In general, flow equivalence is an important relation for studying Cantor systems and has applications to the study of various $C^*$-algebras associated to Cantor systems. In this framework, the group $\mathcal{M}(T)$ plays the role of a symmetry group of $(X,T)$ with respect to the flow equivalence relation.\\
\indent A fundamental study of $\mathcal{M}(T)$ when $(X,T)$ is an irreducible shift of finite type was undertaken in \cite{BCMCG}, which itself built on results from \cite{Boyleposfac} and \cite{ChuysurichayThesis}. Similar to \cite{BCMCG}, we are motivated in part by the connection between $\mathcal{M}(T)$ and the group of automorphisms $\Aut(T)$, which consists of all homeomorphisms of $X$ that commute with $T$. The group $\Aut(T)$ has been the subject of significant interest; when $(X,T)$ is a shift of finite type, $\Aut(T)$ is known to possess a rich structure (see \cite{BLR88}, \cite{KR90}, \cite{Yang2018}), while recent results (e.g. \cite{CyrKraStretchedExponential}, \cite{CyrKraSubquadratic}, \cite{CyrKraBeyondTransitivity}, \cite{DDMPlowcomp}, \cite{CFKPDistortion}) indicate $\Aut(T)$ is more tame when $(X,T)$ is a subshift of lower complexity. Toward understanding the relation between the groups $\Aut(T)$ and $\mathcal{M}(T)$, in Proposition \ref{prop:autembedding} we show that when $(X,T)$ is minimal, $\Aut(T) / \langle T \rangle$ always embeds into $\mathcal{M}(T)$. Essential to our investigation is then examining how much $\mathcal{M}(T)$ differs from the image of this embedding.\\
\indent Results from \cite{BCMCG} indicate the group $\mathcal{M}(T)$ is quite complicated when $(X,T)$ is an irreducible shift of finite type; in particular, it is a countable group (see \cite[Corollary 2.3]{BCMCG}) which is not residually finite (see \cite[Theorem 6.2]{BCMCG}), and properly contains $\Aut(T) / \langle T \rangle$ (see \cite[Theorem 5.5]{BCMCG}), which is itself a large group. In \cite[Question 3.9]{BCMCG} it was asked whether, like the automorphism groups, the structure of the mapping class group of a low complexity subshift was necessarily more constrained. Two of our main results address this question directly. First we prove:
\begin{theorem}[Theorem \ref{thm:substhm1} in the main text.]\label{thm:leavesfalling}
Suppose $(X_{\xi},\sigma_{\xi})$ is a minimal subshift associated to a primitive substitution $\xi$. Then $\mathcal{M}(\sigma_{\xi})$ fits into an exact sequence
\begin{equation}
1 \to \mathcal{F} \to \mathcal{M}(\sigma_{\xi}) \to \mathbb{Z} \to 1
\end{equation}
where $\mathcal{F}$ is a finite group. If $\xi$ is of type CR (see Definition \ref{def:typeCR}), then $\mathcal{F}$ is isomorphic to $\Aut(\sigma_{\xi}) / \langle \sigma_{\xi} \rangle$.
\end{theorem}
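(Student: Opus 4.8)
The plan is to analyze $\mathcal{M}(\sigma_{\xi})$ through its action on the first cohomology $\check{H}^{1}$ of the suspension, together with a distinguished element coming from recognizability of $\xi$. Identify $\check{H}^{1}(\Sigma_{\sigma_{\xi}}X_{\xi};\mathbb{Z})$ with the dimension group $K^{0}(X_{\xi},\sigma_{\xi}) = C(X_{\xi},\mathbb{Z})/\{g - g\circ\sigma_{\xi}\}$, with the height class of the circle map $\Sigma_{\sigma_{\xi}}X_{\xi}\to\mathbb{R}/\mathbb{Z}$ corresponding to the class $[1]$ of the roof function. A flow-orientation-preserving homeomorphism carries roof functions of cross sections to roof functions of cross sections, so it induces an order automorphism of $K^{0}(X_{\xi},\sigma_{\xi})$, giving $\rho\colon\mathcal{M}(\sigma_{\xi})\to\Aut_{+}(K^{0}(X_{\xi},\sigma_{\xi}))$. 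Primitivity of $\xi$ makes $(X_{\xi},\sigma_{\xi})$ uniquely ergodic; writing $\tau\colon K^{0}(X_{\xi},\sigma_{\xi})\to\mathbb{R}$ for the trace given by the unique invariant measure, $\Inf(K^{0})=\ker\tau$ and $K^{0}/\Inf\cong\tau(K^{0})=:\Lambda_{\xi}$, a dense subgroup of $\mathbb{R}$. As $\Aut_{+}$ preserves $\Inf$, Hölder's theorem yields a scaling homomorphism $\operatorname{sc}\colon\mathcal{M}(\sigma_{\xi})\to\mathbb{R}_{>0}$ recording the factor by which $\rho(\Phi)$ dilates $\Lambda_{\xi}$.

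For surjectivity onto $\mathbb{Z}$, I would first produce a canonical class $\Psi\in\mathcal{M}(\sigma_{\xi})$: by Mossé recognizability there is a clopen $Y\subseteq X_{\xi}$ whose first-return map $\sigma_{Y}$ is conjugate to $\sigma_{\xi}$ with return time $r(y)=|\xi(y_{0})|$, and the tower over $(Y,\sigma_{Y})$ of heights $r$ reconstructs $(X_{\xi},\sigma_{\xi})$. The suspension $\Sigma^{(r)}_{\sigma_{Y}}Y$ is flow-orientation-preservingly identified with $\Sigma_{\sigma_{\xi}}X_{\xi}$ in two ways — tautologically, since the orbit segments over $Y$ exhaust $\Sigma_{\sigma_{\xi}}X_{\xi}$, and via the conjugacy $(X_{\xi},\sigma_{\xi})\cong(Y^{(r)},\sigma_{Y}^{(r)})$ — and composing them gives $\Psi$. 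A computation identifies $\rho(\Psi)$ with the canonical automorphism $\widehat{M}$ of the stationary dimension group $K^{0}(X_{\xi},\sigma_{\xi})\cong\varinjlim(\mathbb{Z}^{\A},M_{\xi}^{\top})$, whence $\operatorname{sc}(\Psi)=\lambda$, the Perron eigenvalue of $M_{\xi}$; since $(X_{\xi},\sigma_{\xi})$ is infinite, $\lambda>1$ and $\langle\Psi\rangle\cong\mathbb{Z}$. One then checks $\operatorname{Im}(\operatorname{sc})$ is infinite cyclic: the scaling factor of any $\Phi$ is realized by an order automorphism of the integral direct limit $\varinjlim(\mathbb{Z}^{\A},M_{\xi}^{\top})$, and the linear recurrence of substitution subshifts forces such an automorphism to agree with a power of $\widehat{M}$ up to an automorphism bounded on the finite stages, confining $\operatorname{sc}(\Phi)$ to $\langle\lambda\rangle$. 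This gives $\mathcal{M}(\sigma_{\xi})\twoheadrightarrow\mathbb{Z}$ and identifies $\mathcal{F}$ with the kernel, the scaling-$1$ classes.

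The crux — and the step I expect to be the main obstacle — is finiteness of $\mathcal{F}$. A scaling-$1$ class $\Phi$ has $\rho(\Phi)$ fixing $\Lambda_{\xi}$, and one must upgrade this cohomological near-triviality to an honest normalization: using recognizability and linear recurrence, I would isotope a representative of $\Phi$ so that it sends the section $X_{\xi}\times\{0\}$ to a section whose return system is conjugate to $(X_{\xi},\sigma_{\xi})$ compatibly with $\Phi$, reducing $\Phi$ to the isotopy class of a bounded self-equivalence of $(X_{\xi},\sigma_{\xi})$. Such classes are governed by $\Aut(\sigma_{\xi})$ together with a residual infinitesimal/rotational contribution measured in $\Inf(K^{0})$ and $\check{H}^{0}$; since $(X_{\xi},\sigma_{\xi})$ is linearly recurrent, $\Aut(\sigma_{\xi})/\langle\sigma_{\xi}\rangle$ is finite (cf. \cite{DDMPlowcomp}) and the residual part is finite too, so $\mathcal{F}$ is finite. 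The delicate point is exactly this passage from a cohomologically trivial mapping class to an actual self-equivalence on a transversal, where Mossé recognizability and the rigidity of linearly recurrent systems are essential.

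Finally, for the type CR statement: Proposition \ref{prop:autembedding} embeds $\Aut(\sigma_{\xi})/\langle\sigma_{\xi}\rangle$ into $\mathcal{M}(\sigma_{\xi})$ (recall the homeomorphism of $\Sigma_{\sigma_{\xi}}X_{\xi}$ induced by $\sigma_{\xi}$ is the time-one flow map, isotopic to the identity), and its image lies in $\mathcal{F}$ since a conjugacy dilates $\Lambda_{\xi}$ trivially. Equality reduces to showing every scaling-$1$ class is isotopic to one induced by an automorphism, i.e.\ that the residual infinitesimal/rotational ambiguity above is realized inside $\Aut(\sigma_{\xi})$ — and this is precisely what type CR provides, the coincidence structure forcing the relevant $\Inf(K^{0})$- and $\check{H}^{0}$-contributions to vanish or be absorbed by automorphisms. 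Hence $\mathcal{F}\cong\Aut(\sigma_{\xi})/\langle\sigma_{\xi}\rangle$.
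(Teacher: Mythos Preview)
Your framework is right --- the scaling homomorphism $\operatorname{sc}$ (the paper's $R_{\mu}$) is the correct surjection, and $\mathcal{F}=\ker\operatorname{sc}$. But two steps are genuine gaps rather than sketches.

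\textbf{The image of $\operatorname{sc}$.} Your argument that $\operatorname{sc}(\Phi)\in\langle\lambda\rangle$ (``linear recurrence forces such an automorphism to agree with a power of $\widehat{M}$ up to an automorphism bounded on the finite stages'') is not a proof, and in fact the conclusion as stated is too strong: the paper only shows $\Im R_{\mu}$ has rank one, not that it equals $\langle\lambda\rangle$ (see Remark~\ref{thm:substhm1}+1 in the text). The actual argument is substantially heavier: one uses Lemma~\ref{lemma:imagetrace} to locate $\Im\tau_{\mu}$ in an $S$-integer ring and invokes the $S$-unit theorem to get finite generation; then, given $\alpha\in\Im R_{\mu}$, one realizes $\alpha$ as the Perron eigenvalue of a \emph{new} primitive substitution $\theta$ on a return system (via \cite{DOPselfinduced}), and appeals to rigidity of tiling spaces \cite{BSrigidity} to force $\lambda^{m}=\alpha^{n}$. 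Nothing like this follows from linear recurrence alone.

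\textbf{Finiteness of $\mathcal{F}$.} This is where your proposal breaks down. Scaling-$1$ only gives $f^{*}[1]-[1]\in\Inf$, not $f^{*}[1]=[1]$, so you cannot directly reduce $\Phi$ to an automorphism when $\Inf\ne 0$ (e.g.\ Thue--Morse). You acknowledge a ``residual infinitesimal contribution'' but assert it is finite without argument; this is precisely the hard point. The paper's mechanism is entirely different and does not pass through $\Aut(\sigma_{\xi})$ at all: it shows (Lemma~\ref{lemma:amongferns}) that any $[f]\in\ker R_{\mu}$ fixing an \emph{asymptotic leaf} is isotopic to the identity, whence $\ker R_{\mu}$ injects into the permutation group of the (finitely many) asymptotic leaves. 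The proof of that lemma is the technical heart: one transports $f$ to the self-similar tiling space $\Omega_{\xi}$ and ``irons out'' the cocycle by conjugating with high powers of $\xi_{\Omega}$, using Kwapisz's equicontinuity result \cite{KwapiszRigidity} and Lemma~\ref{lemma:thegracefulmouse} to force the limit to have cocycle $\alpha_{f_{\infty}'}(x,t)=t$. This ironing-out procedure is what defeats the infinitesimal obstruction, and it is absent from your sketch. The same ironing procedure, combined with the CR conjugacy $h_{CR}$ to a constant-roof suspension, is what makes the type~CR identification $\mathcal{F}\cong\Aut(\sigma_{\xi})/\langle\sigma_{\xi}\rangle$ go through; your description of CR as ``forcing the $\Inf$-contributions to vanish'' is in the right spirit but does not engage with how this is actually accomplished.
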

The class of substitutions of type CR include those of Pisot type, as well as those dual to a constant length substitution.\\
\indent Key to our investigation is a representation of $\mathcal{M}(T)$ arising from its action on the ordered abelian group of coinvariants defined by
\begin{equation*}
\begin{gathered}
\mathcal{G}_{T} = C(X,\mathbb{Z}) / \langle \gamma - \gamma \circ T^{-1} \rangle\\
\mathcal{G}_{T}^{+} = \{ [\gamma] \in \mathcal{G}_{T} \mid \textnormal{ there exists non-negative } \gamma^{\prime} \in C(X,\mathbb{Z}) \textnormal{ such that } [\gamma] = [\gamma^{\prime}]\},
\end{gathered}
\end{equation*}
where $C(X,\mathbb{Z})$ denotes the group of continuous functions from $X$ to $\mathbb{Z}$. States on $\mathcal{G}_{T}$ correspond bijectively to $T$-invariant Borel probability measures on $X$, and the subgroup $\Inf \mathcal{G}_{T}$ of infinitesimals in $\mathcal{G}_{T}$ consists of the elements which vanish under every state. The (unital) ordered group $\mathcal{G}_{T}$, together with the infinitesimals, plays an important role in the classification of minimal Cantor system up to orbit equivalence (see \cite{GPS1}). Letting $P_{X}(n)$ denote the number of admissible words of length $n$ in $X$, we prove:
\begin{theorem}[Theorem \ref{thm:virtuallyabelian} in the main text.]
Let $(X,T)$ be a minimal subshift for which $\Inf \mathcal{G}_{T} = 0$ and
\begin{equation*}
\liminf\limits_{n} \frac{P_{X}(n)}{n} < \infty.
\end{equation*}
Then $\mathcal{M}(T)$ is virtually abelian.
\end{theorem}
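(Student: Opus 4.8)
\emph{Proof proposal.} The plan is to study $\mathcal{M}(T)$ through the representation $\rho\colon\mathcal{M}(T)\to\Aut(\mathcal{G}_{T},\mathcal{G}_{T}^{+})$ coming from the action on the ordered group of coinvariants (equivalently, the natural action on $\check{H}^{1}(\Sigma_{T}X)\cong\mathcal{G}_{T}$). I would prove (a) that the image $\rho(\mathcal{M}(T))$ is virtually abelian, using the complexity hypothesis; (b) that $\ker\rho$ is finite, using both hypotheses; and then deduce that $\mathcal{M}(T)$ is virtually abelian from the resulting extension.

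For (a): since $(X,T)$ is minimal, $\mathcal{G}_{T}$ is a simple dimension group with order unit $[1_{X}]$ (see \cite{GPS1}), and its pure states are integration against the ergodic $T$-invariant measures. By a theorem of Boshernitzan, $\liminf_{n}P_{X}(n)/n<\infty$ forces $(X,T)$ to have only finitely many ergodic measures $\mu_{1},\dots,\mu_{k}$. Because $[1_{X}]$ is an order unit, every nonzero positive homomorphism $\mathcal{G}_{T}\to\mathbb{R}$ is a positive multiple of a state, so the extreme rays of the cone of positive homomorphisms are precisely those through $\mu_{1},\dots,\mu_{k}$; and since $\Inf\mathcal{G}_{T}=0$, the evaluation map $\iota\colon\mathcal{G}_{T}\hookrightarrow\mathbb{R}^{k}$, $[\gamma]\mapsto\bigl(\int\gamma\,d\mu_{i}\bigr)_{i}$, is injective. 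Any $\phi\in\Aut(\mathcal{G}_{T},\mathcal{G}_{T}^{+})$ dualizes to a linear automorphism of that cone and hence permutes its extreme rays: $\mu_{i}\circ\phi=c_{i}\,\mu_{\pi(i)}$ for some $c_{i}>0$ and a permutation $\pi\in S_{k}$. Carried through $\iota$, this says that $\phi$ acts on $\iota(\mathcal{G}_{T})\subseteq\mathbb{R}^{k}$ as the positive monomial matrix determined by $(c_{i},\pi)$, and the assignment $\phi\mapsto(c_{i},\pi)$ is injective because $\iota$ is. So $\Aut(\mathcal{G}_{T},\mathcal{G}_{T}^{+})$ embeds into the group $(\mathbb{R}_{>0})^{k}\rtimes S_{k}$ of positive monomial matrices, which contains the abelian subgroup $(\mathbb{R}_{>0})^{k}$ of index $k!$. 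Being a subgroup of a virtually abelian group, $\rho(\mathcal{M}(T))$ is virtually abelian.

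For (b): an element of $\ker\rho$ is the class of a flow equivalence of $\Sigma_{T}X$ acting trivially on $\mathcal{G}_{T}$, and this is the heart of the matter; the linear complexity hypothesis must be used in an essential way, since for an irreducible shift of finite type $\ker\rho$ is enormous, which is exactly what makes $\mathcal{M}(\sigma_{A})$ fail to be residually finite in \cite{BCMCG}. Using the description of flow equivalences developed earlier in the paper --- via return-time cocycles over a Kakutani--Rokhlin / Bratteli--Vershik model --- I would argue that a cohomologically trivial flow equivalence, after an isotopy, splits into a ``flow shear'' recorded by a real-valued cocycle whose class is controlled by $\Inf\mathcal{G}_{T}$, together with an automorphism of $(X,T)$ lying in the kernel of $\Aut(T)\to\Aut(\mathcal{G}_{T})$: the hypothesis $\Inf\mathcal{G}_{T}=0$ kills the former, while $\liminf_{n}P_{X}(n)/n<\infty$ makes $\Aut(T)/\langle T\rangle$ finite (see \cite{DDMPlowcomp,CyrKraSubquadratic}), so the latter is finite; hence $\ker\rho$ is finite. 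Isolating that cocycle invariant and pinning it precisely to $\Inf\mathcal{G}_{T}$ is the step I expect to be the main obstacle. Granting it, $\mathcal{M}(T)$ is an extension of a virtually abelian group by a finite one, hence virtually abelian; alternatively, one can reach the same conclusion through the embedding $\mathcal{M}(T)\hookrightarrow\mathrm{Pic}\bigl(C(X)\rtimes_{T}\mathbb{Z}\bigr)$ together with the classification of simple AT-algebras of real rank zero, which under the present hypotheses already exhibits the Picard group as virtually abelian.
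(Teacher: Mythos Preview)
Your outline is correct and lands on the same two ingredients the paper uses: Boshernitzan's bound on the number of ergodic measures, which sends the induced action on states into the positive monomial matrices $(\mathbb{R}_{>0})^{k}\rtimes S_{k}$; and the fact (Theorem~\ref{thm:coinvariantskernel} in the paper) that any $[f]$ acting trivially on $\mathcal{G}_{T}$ lies in $\Im\Psi\cong\Aut(T)/\langle T\rangle$, which is finite by \cite{DDMPlowcomp}. So your extension ``finite kernel / virtually abelian image'' works.

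Two small points of comparison. First, the paper organizes the argument slightly differently: rather than bounding $\ker\rho$, it passes to the finite-index subgroup $F(T)$ fixing every asymptotic equivalence class (finite by Lemma~\ref{lemma:sleepyfern}), and shows via Lemma~\ref{lemma:thewiseturtle} that the state-space representation is \emph{faithful} on $F(T)$; your route through $\ker\rho$ is actually a bit more direct and avoids asymptotic leaves altogether. Second, you slightly misplace the role of the hypothesis $\Inf\mathcal{G}_{T}=0$: it is not needed for part~(b) at all --- the result $\ker\pi_{T}\subset\Im\Psi$ holds for any minimal Cantor system, with no infinitesimal assumption, via a Gottschalk--Hedlund argument --- whereas it is essential in part~(a) to make the embedding $\iota\colon\mathcal{G}_{T}\hookrightarrow\mathbb{R}^{k}$ injective and hence the monomial-matrix representation faithful on $\Aut(\mathcal{G}_{T},\mathcal{G}_{T}^{+})$. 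The Picard-group alternative you mention at the end is not the paper's route and would require substantially more input.
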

For $(X,T)$ uniquely ergodic the condition $\Inf \mathcal{G}_{T} = 0$ is satisfied when the group of coinvariants $(\mathcal{G}_{T}, \mathcal{G}_{T}^{+})$ is totally ordered. This class includes Sturmians and certain IETs (see Remark \ref{rmk:trivialinf} for more details). We also give a complete description of the mapping class group of a Sturmian subshift in Example \ref{example:denjoy}: either the Sturmian system is conjugate to a substitution, and Theorem \ref{thm:leavesfalling} applies, or the mapping class group is trivial.\\
\indent Let us say briefly how we approach both Theorem 1.1 and 1.2. We can associate a cocycle to any self-flow equivalence $f$ of $\Sigma_{T}X$, which determines how $f$ scales leaves (i.e. path components of $\Sigma_{T}X$). There are only finitely many asymptotic leaves in the low complexity setting, so some power of $f$ must fix a leaf. The goal is to show that if $f$ acts by the identity on this leaf on average, then it is isotopic to the identity. In general, the presence of non-trivial infinitesimals provides meaningful obstructions to carrying this out. For substitution systems however, even though non-trivial infinitesimals may exist (for example, in the Thue-Morse system), the substitution map allows for an ironing out procedure (as employed in \cite{KwapiszRigidity}) that lets us overcome these obstructions.

\indent The layout of the paper is as follows. We give general definitions and background in Section 2, and then specialize to minimal systems in Section 3. Section 4 contains background on the representation of $\mathcal{M}(T)$ arising from the action on the coinvariants $\mathcal{G}_{T}$. 
In Section 5 we study $\mathcal{M}(\sigma)$ for $(X,\sigma)$ a minimal subshift associated to an aperiodic substitution and prove Theorem 1.1.
Section 6 concerns $\mathcal{M}(\sigma)$ when $(X,\sigma)$ is a subshift of linear complexity, and we prove Theorem 1.2.\\
\indent In Section 7 we construct, for any Cantor system $(X,T)$, a homomorphism from $\mathcal{M}(T)$ to the Picard group of the crossed-product $C^{*}$-algebra $C(X) \rtimes_{T} \mathbb{Z}$ associated to $(X,T)$. If $(X,T)$ is minimal, we show this homomorphism is injective, and propose studying its image in the Picard group.

\subsection{Acknowledgements}
The authors wish to thank Mike Boyle, John Franks, and Bryna Kra for several helpful suggestions and comments. This work was supported in part by the National Science Foundation grant `RTG: Analysis on manifolds' at Northwestern University.

\section{The mapping class group of a Cantor system}
By a Cantor system $(X,T)$ we will always mean a homeomorphism $T \colon X \to X$ where $X$ is a Cantor set. Given a Cantor system $(X,T)$, the \emph{suspension} $\SuspT$ is the space $X \times I / \sim$, where $(x,t) \sim (T^{n}(x),t-n)$. Elements in $\SuspT$ are equivalence classes $[(x,t)]$, where $x \in X$ and $t \in \mathbb{R}$; since every class in $\SuspT$ has a unique representative of the form $[(x,t)]$ where $0 \le t < 1$, we will often refer to points in $\SuspT$ as simply $(x,t)$ where $0 \le t < 1$. We often suppress the ordered pair notation and simply denote points by $z \in \Sigma_{T}X$, with the understanding that such a point is given by $z = (x,t)$ for some $x \in X, 0 \le t < 1$.\\
\indent The space $\SuspT$ is compact, is locally the product of a totally disconnected set with an arc, and comes with an $\mathbb{R}$-action defined by
\begin{equation}\label{eqn:thenameofawave}
\Upsilon \colon \mathbb{R} \times \SuspT \to \SuspT, \hspace{.17in} \Upsilon(s,[(x,t)]) = [(x,t+s)].
\end{equation}
We will almost exclusively use the more simple notation
\begin{equation*}
\Upsilon(s,z) = z+s.
\end{equation*}
In particular, for $k \in \mathbb{Z}$ we have $\Upsilon(k,(x,t)) = (x,t) + k = (T^{k}x,t)$.
We refer to the $\Upsilon$-orbit of a point $z$ as the \emph{leaf} containing $z$, and observe that the leaf containing $z$ coincides with the path component in $\Sigma_{T}X$ which contains $z$.\\
\indent There is a distinguished cross section to the flow that we denote by
\begin{equation*}
\Gamma = \big\{[(x,0)]  \mid x \in X \big\} \subset \Sigma_{T}X.
\end{equation*}
\indent We will at times make use of suspensions of $(X,T)$ using more general roof functions. For a positive locally constant function $r \colon X \to \mathbb{R}$ (called a roof function) we define the suspension (with roof function $r$) by
\begin{gather*}
\Sigma_{T}^{r}X  = X \times \mathbb{R} \Big/ \sim \\
(x,t) \sim (T(x),t-r(x)).
\end{gather*}
The space $\Sigma_{T}^{r}X$ also carries a flow defined analogously to that of \eqref{eqn:thenameofawave}.\\
\indent A \emph{flow equivalence} between Cantor systems $(X,T)$ and $(X^{\prime},T^{\prime})$ is a homeomorphism $f \colon \SuspT \to \Sigma_{T^{\prime}}X^{\prime}$ such that $f$ takes $\Upsilon$-orbits onto $\Upsilon$-orbits in an orientation preserving way. Two systems $(X,T), (X^{\prime},T^{\prime})$ are \emph{flow equivalent} if there exists a flow equivalence between them.\\
\indent The group of self-flow equivalences $\homeo \SuspT$ is a subgroup of the topological group of homeomorphisms of $\SuspT$, and we say $f \in \homeo \SuspT$ is isotopic to the identity if $f$ is in the path component of the identity map in $\homeo \SuspT$. Equivalently, $f \in \homeo \Sigma_{T}X$ is isotopic to the identity if and only if there exists a continuous $\eta \colon \Sigma_{T}X \to \mathbb{R}$ such that $f(z) = z+\eta(x)$ for all $z \in \Sigma_{T}X$ (see \cite[Theorem 3.1]{BCEisotopy}, together with \cite{BCEisotopycorrigendum}, for a proof of this). The collection
\begin{equation*}
\homeo _{0} \SuspT = \{ f \in \homeo \SuspT \mid f \textnormal{ is isotopic to the identity}\}
\end{equation*}
forms a normal subgroup, and we define the mapping class group of $(X,T)$ to be the quotient group
\begin{equation*}
\mathcal{M}(T) = \homeo \SuspT \Big/ \homeo_{0} \SuspT.
\end{equation*}
Thus $\mathcal{M}(T)$ consists of the group of isotopy classes of self-flow equivalences of the system $(X,T)$.
\subsection{Flow Codes}
As a consequence of the work of Parry and Sullivan in \cite{PSflowcodes}, any flow equivalence between Cantor systems is isotopic to one given by a conjugacy of return systems. We briefly outline here the parts from this theory relevant for our purposes; our presentation follows closely that of \cite{BCEisotopy}, which consolidates, and greatly expands upon, results from \cite{PSflowcodes}, and we refer the reader to \cite{BCEisotopy} for details and more background on flow equivalence and isotopy for Cantor systems.   \\
\indent A clopen subset $C \subset X$ is called a \emph{discrete cross section} if there is a continuous function $r \colon C \to \mathbb{N}$ defined by $r(x) = \min \{i \in \mathbb{N} \mid T^{i}(x) \in C\}$ for which $X = \{T^{i}(x) \mid x \in C, i \in \mathbb{N}\}$. Any discrete cross section $C \subset X$ determines a return system $(C,T_{C})$ by defining
\begin{equation*}
T_{C} \colon C \to C, \hspace{.17in} T_{C} \colon x \mapsto T^{r_{C}(x)}x.
\end{equation*}
The systems $(X,T) , (C,T_{C})$ are always flow equivalent.
Note that if $(X,T)$ is minimal, then any clopen subset is a discrete cross section, and any return system $(C,T_{C})$ is also minimal. \\
\indent For Cantor systems $(X,T), (X^{\prime},T^{\prime})$, an $(X,T),(X^{\prime},T^{\prime})$-\emph{flow code} is a triple $(\phi,C,D)$ where $C \subset X, D \subset X^{\prime}$ are discrete cross sections and $\phi \colon (C,T_{C}) \to (D,T_{D})$ is a conjugacy\footnote{When $\phi$ is a map between subshifts, $\phi$ is induced by a word block code (an analogue of a sliding block code), motivating our use of the phrase flow code; see \cite[Appendix A]{BCMCG}.}. Any $(X,T),(X^{\prime},T^{\prime})$-flow code $(\phi,C,D)$ induces a flow equivalence $S_{\phi} \colon \Sigma_{T}X \to \Sigma_{T^{\prime}}X^{\prime}$ as follows: since $C$ is a cross section, we can represent any point $z \in \Sigma_{T} X$ by $z=(x, t)$ with $x \in C$ (with $t$ possibly larger than 1). The flow code maps the segment with endpoints $(x, t)$ and $(x, t+r_C(x)) \sim (T_Cx, t)$ linearly to the segment in $\Sigma_{T'}X'$ with endpoints $(\varphi x, t)$ and $(\varphi x, t+r_D(\varphi x)) \sim (T_D (\varphi x), t)$. Since $\varphi$ is a conjugacy, the map is well-defined at endpoints. As $C$ is a discrete cross section, any leaf is a union of such segments, and the flow code defines a flow equivalence from $\Sigma_T X$ to $\Sigma_{T'}X'$. \\
\indent The following is a fundamental result in the study of flow equivalence of zero-dimensional systems. While originally due to Parry and Sullivan in \cite{PSflowcodes}, the version below is stated differently, and we refer the reader to \cite{BCEisotopy} for details.
\begin{theorem}\cite[Theorem 4.1]{BCEisotopy}\label{thm:parrysullivan}
Let $(X,T), (X^{\prime},T^{\prime})$ be Cantor systems. For any flow equivalence $f \colon \Sigma_{T}X \to \Sigma_{T^{\prime}}X^{\prime}$ there exists a flow code $(\phi,C,D)$ such that $f$ is isotopic to ${S_{\phi} \colon \Sigma_{T}X \to \Sigma_{T^{\prime}}X^{\prime}}$.
\end{theorem}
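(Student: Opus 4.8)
The plan is to reduce, by composing $f$ with isotopies, to the situation in which $f$ carries one ``flat'' discrete cross section onto another, and then to recognize such an $f$ as isotopic to the flow code map it visibly determines.

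First I would pull back the distinguished cross section. Writing $\Gamma^{\prime}=\{(x^{\prime},0):x^{\prime}\in X^{\prime}\}\subset\Sigma_{T^{\prime}}X^{\prime}$ and $\Sigma=f^{-1}(\Gamma^{\prime})\subset\SuspT$, since $f$ is a homeomorphism carrying flow lines to flow lines in an orientation-preserving way and $\Gamma^{\prime}$ is transverse to the flow, the set $\Sigma$ is a compact cross section of $\SuspT$ transverse to the flow, meeting every leaf syndetically (being the $f$-preimage of the cross section $\Gamma^{\prime}$). Because $\SuspT$ is locally the product of a Cantor set with an arc and $\Sigma$ is transverse, near each of its points $\Sigma$ is the graph $\{(x,0)+g(x):x\in V\}$ of a continuous real-valued function over some clopen $V\subseteq X$. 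I would cover $\Sigma$ by finitely many such graphs (compactness), refine to a clopen partition of $\Sigma$, and then for each point $x$ of the clopen projection $C:=\{x\in X:(x,t)\in\Sigma\text{ for some }t\}$ select, in a locally constant fashion, a single point of $\Sigma$ lying over $x$. Since all points of $\Sigma$ over a fixed $x$ lie on the leaf of $x$, the resulting clopen set $\Sigma_{0}\subseteq\Sigma$ — which is the graph of a continuous $c\colon C\to\mathbb{R}$ — still meets every leaf; a standard argument with $\omega$-limit sets of the flow, applied to a flow-box neighborhood $\Sigma_{0}\times(-\delta,\delta)$, shows moreover that every forward $T$-orbit meets $C$, so $C$ is a discrete cross section. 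Pushing $\Sigma_{0}$ onto $\Gamma_{C}:=\{(x,0):x\in C\}$ along the flow by $-c$ yields an isotopy $\{k_{s}\}$ of $\SuspT$ with $k_{0}=\id$ and $k_{1}(\Gamma_{C})=\Sigma_{0}$.

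Replacing $f$ by $f\circ k_{1}$ (which is isotopic to $f$ via $\{f\circ k_{s}\}$), we now have $f(\Gamma_{C})=f(\Sigma_{0})\subseteq\Gamma^{\prime}$; this is a clopen subset of $\Gamma^{\prime}\cong X^{\prime}$, hence equals $\Gamma_{D}^{\prime}:=\{(x^{\prime},0):x^{\prime}\in D\}$ for a clopen $D\subseteq X^{\prime}$ which, as $f$ is a leaf-preserving bijection, is a discrete cross section. Defining $\phi\colon C\to D$ by $f(x,0)=(\phi x,0)$: since $f$ carries the leaf of $x$ forward to the leaf of $\phi x$ preserving orientation, it carries the first return of the leaf to $\Gamma_{C}$ to the first return of the image leaf to $\Gamma_{D}^{\prime}$, i.e.\ $f(T_{C}x,0)=(T_{D}\phi x,0)$; thus $\phi$ is a conjugacy $(C,T_{C})\to(D,T_{D})$ and $(\phi,C,D)$ is a flow code. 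Its induced map also satisfies $S_{\phi}(x,0)=(\phi x,0)$, so $g:=S_{\phi}^{-1}\circ f$ is a flow equivalence $\SuspT\to\SuspT$ fixing $\Gamma_{C}$ pointwise and mapping each leaf onto itself preserving orientation. Consequently $g$ preserves each return segment $\{(x,0)+t:0\le t\le r_{C}(x)\}$, acting on it by an increasing self-homeomorphism fixing the two endpoints; writing $g(x,t)=(x,\gamma_{x}(t))$ for such $(x,t)$, the convex combinations $g_{u}(x,t)=(x,(1-u)\gamma_{x}(t)+ut)$ define an isotopy from $g$ to $\id$ in $\homeo\SuspT$. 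Then $\{S_{\phi}\circ g_{u}\}$ is an isotopy from $S_{\phi}$ to $S_{\phi}\circ g=f$, and unwinding the reductions shows the original $f$ is isotopic to $S_{\phi}$.

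The heart of the argument — and the step I expect to be the main obstacle — is the construction in the second paragraph: realizing the pulled-back cross section $f^{-1}(\Gamma^{\prime})$, which can sit inside $\SuspT$ in a complicated, non-flat way, as an honest discrete cross section $\Gamma_{C}$ up to isotopy, and checking that the flattened section $\Sigma_{0}$ still meets every leaf and every forward orbit. This is exactly where the zero-dimensionality of $X$ and the local product structure of the suspension are indispensable. Everything else is bookkeeping, including — when $(X,T)$ is a subshift — translating the statement into the combinatorial ``symbol expansion'' picture of \cite{PSflowcodes}.
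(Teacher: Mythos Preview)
The paper does not give its own proof of this theorem: it is quoted from \cite[Theorem 4.1]{BCEisotopy} (building on \cite{PSflowcodes}), and the surrounding text explicitly refers the reader there for details. So there is nothing in the paper to compare your argument against.

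Your outline is the standard route and is essentially correct: pull back $\Gamma^{\prime}$ to a compact transverse section $\Sigma$, flatten a clopen piece $\Sigma_{0}\subset\Sigma$ to a flat section $\Gamma_{C}$ by an ambient flow-isotopy, read off the conjugacy $\phi$ from the resulting map of flat sections, and finish by convexly straightening $S_{\phi}^{-1}\circ f$ on each return segment. The endgame (checking $\phi T_{C}=T_{D}\phi$, and the segmentwise isotopy of $g$ to the identity) is fine as written.

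The one place where your write-up is genuinely loose is the definition of $C$ as ``the clopen projection $\{x\in X:(x,t)\in\Sigma\text{ for some }t\}$''. With the canonical coordinates $0\le t<1$, the coordinate map $(x,t)\mapsto x$ is not continuous on $\Sigma_{T}X$ (it jumps across $\Gamma$), so this set need not be clopen as stated, and the ``select one point over each $x$'' step is correspondingly ill-posed near $t=0$. The fix is exactly what you gesture at but do not carry out: use the \emph{first-return-to-$\Gamma$} map $z\mapsto z+\tau(z)$, $\tau(z)=\min\{t>0:z+t\in\Gamma\}$, which \emph{is} continuous; its image $C\subset X$ is compact, and a short flow-box argument shows it is also open, hence clopen, and a discrete cross section (bounded return time to $\Sigma$ transfers to bounded return time to $C$). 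Since this map can be non-injective on $\Sigma$, choose a locally constant section $s\colon C\to\Sigma$ and set $\Sigma_{0}=s(C)$; then $\Sigma_{0}$ is clopen in $\Sigma$, flow-isotopic to $\Gamma_{C}$, and $f(\Sigma_{0})$ is clopen in $\Gamma^{\prime}$, giving your $D$. With this repair your argument goes through, and it is the same argument one finds in \cite{BCEisotopy}.
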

We will often refer to an $(X,X)$-flow code simply as a flow code. The following is a consequence of Theorem \ref{thm:parrysullivan}.
\begin{proposition}\cite[Corollary 2.3]{BCMCG}\label{prop:countable}
If $(X,T)$ is a subshift then $\mathcal{M}(T)$ is countable.
\end{proposition}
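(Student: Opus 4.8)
The plan is to deduce this from Theorem~\ref{thm:parrysullivan}. By that theorem, every element of $\mathcal{M}(T)$ is the isotopy class $[S_{\phi}]$ of the flow equivalence associated to some $(X,X)$-flow code $(\phi,C,D)$; hence the map $(\phi,C,D)\mapsto[S_{\phi}]$ from the set of $(X,X)$-flow codes onto $\mathcal{M}(T)$ is surjective, and it suffices to prove that there are only countably many $(X,X)$-flow codes. I would do this in two steps: (i) there are only countably many discrete cross sections $C\subseteq X$, and (ii) for each ordered pair of discrete cross sections $(C,D)$ there are only countably many conjugacies $\phi\colon(C,T_{C})\to(D,T_{D})$.

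Step (i) is immediate from the fact that $(X,T)$ is a subshift over a finite alphabet: the cylinder sets form a countable basis of clopen sets for $X$, so any clopen $C\subseteq X$, being open and compact, is a finite union of cylinders. There are only countably many such finite unions, so there are only countably many clopen subsets of $X$, and in particular only countably many discrete cross sections.

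For Step (ii), fix discrete cross sections $C,D\subseteq X$. The return time $r_{C}\colon C\to\mathbb{N}$ is continuous on the compact set $C$, hence bounded, so only finitely many words of $X$ arise as return words of $C$ (the blocks $x_{0}\cdots x_{r_{C}(x)-1}$ read between consecutive visits of the $T$-orbit of a point to $C$). Coding each $x\in C$ by its bi-infinite itinerary of return words realizes $(C,T_{C})$ as topologically conjugate to a subshift over this finite return-word alphabet, and likewise for $(D,T_{D})$. Any conjugacy $\phi\colon(C,T_{C})\to(D,T_{D})$ then corresponds to a continuous shift-commuting map between two subshifts over finite alphabets, which by the Curtis--Hedlund--Lyndon theorem is a sliding block code; since such a code is specified by a finite window and a finite lookup table, there are only countably many of them. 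Combining (i) and (ii), the set of $(X,X)$-flow codes is countable, hence so is $\mathcal{M}(T)$.

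The only step requiring genuine care is the identification of $(C,T_{C})$ with a subshift. This uses two things: the boundedness of $r_{C}$, so that the return-word alphabet is finite; and the fact that the return-word itinerary map is injective and bicontinuous, which follows because membership in the clopen set $C$ depends on only finitely many coordinates and a point $x\in X$ is recovered from the concatenation of its return words once the starting coordinate of the zeroth block is pinned down. Granting this, the remainder is just counting cylinder sets and sliding block codes.
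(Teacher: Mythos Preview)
Your proof is correct and follows exactly the route the paper indicates: the paper simply records this as a consequence of Theorem~\ref{thm:parrysullivan} (with the detailed argument deferred to \cite[Corollary~2.3]{BCMCG}), and your two-step count of flow codes---countably many clopen sets in a subshift, and countably many sliding block codes between the induced return subshifts via Curtis--Hedlund--Lyndon---is precisely how one unpacks that consequence.
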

\begin{remark}
The subshift hypothesis in Proposition \ref{prop:countable} cannot be dropped. For odometers (see Example \ref{ex:odometers}) the group $\mathcal{M}(T)$ is uncountable.
\end{remark}

\subsection{Asymptotic leaves}
Points $z_1, z_2 \in \Sigma_{T}X$ are said to be \emph{asymptotic} (under the flow) if $\lim\limits_{t \to \infty}d(z_1+t,z_2+t) = 0$ (even though this is really \emph{forward} asymptotic, we will not be concerned with backward asymptotic points, so we suppress the term forward). We say two leaves $\ell_{1},\ell_{2}$ are asymptotic if there are points $z_{1} \in \ell_{1}, z_{2} \in \ell_{2}$ such that $z_{1}$ and $z_{2}$ are asymptotic. Note that two leaves $\ell_{1},\ell_{2}$ are asymptotic if and only if there exists points $x_{1},x_{2} \in X$ such that $(x_1,0) \in \ell_1 \cap \Gamma, (x_2,0) \in \ell_2 \cap \Gamma$, and $x_{1}, x_{2}$ are asymptotic under $T$ (so $\lim\limits_{n \to \infty}d(T^{n}x_{1},T^{n}x_{2}) = 0$). The relation defined by
\begin{equation*}
\ell_{1} \widesim{as} \ell_{2} \hspace{.11in} \textnormal{ if } \ell_{1} \textnormal{ and } \ell_{2} \textnormal{ are asymptotic}
\end{equation*}
defines an equivalence relation on leaves. We call a leaf $\ell$ asymptotic if its equivalence class $[\ell]_{as}$ under $\widesim{as}$ contains more than one element, and denote the set of equivalence classes of asymptotic leaves by $\asy$.\\
\indent Any flow equivalence must take asymptotic leaves to asymptotic leaves. As a result, there is a homomorphism
\begin{equation}\label{eqn:asymrep}
\mathcal{M}(T) \xrightarrow{\pi_{as}} P(\asy)
\end{equation}
where $P(\asy)$ denotes the permutation group on the set $\asy$.
\section{Minimal Cantor systems}
For a Cantor system $(X,T)$, any $f \in \homeo \SuspT$ permutes the set of leaves. The permutation induced by $f$ depends only on the isotopy class of $f$, giving an action of $\mathcal{M}(T)$ on the set of leaves. It can happen that for a topologically transitive subshift $(X,\sigma)$, a map $f \in \homeo \Sigma_{\sigma}X$ fixes every leaf but is not isotopic to the identity (see \cite[Example 3.3]{BCEisotopy}). The following result (which can also be obtained from \cite[Theorem 2.5]{APPsimplicity}) shows that when $(X,T)$ is minimal, this representation of $\mathcal{M}(T)$ is faithful.
\begin{proposition}\label{prop:leafaction}
Let $(X,T)$ be a minimal Cantor system and $f \in \homeo \SuspT$. The following are equivalent:
\begin{enumerate}
\item
$f$ is isotopic to the identity.
\item
$f$ is homotopic to the identity.
\item
$f$ maps every leaf to itself.
\end{enumerate}
\end{proposition}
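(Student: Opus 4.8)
The plan is to prove the cycle of implications $(1) \Rightarrow (2) \Rightarrow (3) \Rightarrow (1)$, with the last implication being the substantive one that uses minimality. The implication $(1) \Rightarrow (2)$ is immediate: an isotopy is in particular a homotopy. For $(2) \Rightarrow (3)$, suppose $f$ is homotopic to the identity via $H \colon \Sigma_{T}X \times [0,1] \to \Sigma_{T}X$. Since leaves are exactly the path components of $\Sigma_{T}X$, the path $s \mapsto H(z,s)$ stays inside the leaf containing $z$; at $s=1$ we have $H(z,1) = f(z)$, so $f(z)$ lies in the same leaf as $z$, i.e. $f$ maps every leaf to itself.

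The heart of the matter is $(3) \Rightarrow (1)$. Assume $f$ fixes every leaf setwise. By the isotopy criterion quoted in the excerpt (from \cite[Theorem 3.1]{BCEisotopy}, with \cite{BCEisotopycorrigendum}), it suffices to produce a continuous function $\eta \colon \Sigma_{T}X \to \mathbb{R}$ with $f(z) = z + \eta(z)$ for all $z$. Because $f$ preserves each leaf and the flow orientation, and each leaf is a copy of $\mathbb{R}$ (the leaves are lines, not circles, since $(X,T)$ is minimal and hence aperiodic — a periodic point would give a finite orbit), for every $z$ there is a unique real number $\eta(z)$ with $f(z) = z + \eta(z)$; monotonicity along the leaf forces this value to be unique and makes $\eta$ a well-defined function. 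The entire problem is therefore to show $\eta$ is continuous. The natural approach is to pass, via Theorem \ref{thm:parrysullivan}, to a flow code representative: $f$ is isotopic to some $S_{\phi}$ where $(\phi, C, D)$ is a flow code, and one checks that if $f$ (hence $S_\phi$, since isotopic maps induce the same leaf permutation) fixes every leaf, then on the cross section the conjugacy $\phi$ must send each point to another point on the same $T$-orbit; by minimality this orbit is infinite, and a compactness/clopen-partition argument on $C$ shows $\phi$ agrees locally with a fixed power of the return map, which is enough to build the continuous displacement $\eta$ explicitly and conclude $S_\phi$, and hence $f$, is isotopic to the identity.

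The main obstacle is precisely establishing the continuity of $\eta$, or equivalently controlling the flow-code representative: a priori $\eta$ could be unbounded or wildly discontinuous, and one must rule this out using minimality. The key point to exploit is that in a minimal system the orbit-closure of every point is all of $X$, so the "index" recording which iterate of the return map $\phi$ locally equals cannot jump arbitrarily; a clopen decomposition of $C$ on which this index is locally constant, combined with the fact that only finitely many such pieces are needed by compactness, gives the bound and continuity. This is exactly the step that fails for a merely transitive subshift, matching the cited counterexample \cite[Example 3.3]{BCEisotopy}, so the proof must visibly invoke minimality here and nowhere else.
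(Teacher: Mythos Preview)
Your overall strategy for $(3)\Rightarrow(1)$ is reasonable, but it diverges from the paper's argument and the key step is not justified. The paper does not pass through a flow code at all: it simply observes that if $f$ fixes every leaf there is a (unique) real-valued displacement $\beta$ with $f(z)=z+\beta(z)$, invokes the main theorem of Kwapisz's \emph{friction} paper to conclude that for a minimal base system this $\beta$ is automatically continuous, and then writes down the straight-line isotopy $f_t(z)=z+t\beta(z)$. The whole weight of minimality is carried by that citation.

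Your route, by contrast, replaces $f$ by an isotopic $S_\phi$ and tries to show directly that the integer cocycle $\alpha$ defined by $\phi(x)=T^{\alpha(x)}x$ is locally constant. The sentence ``a clopen decomposition of $C$ on which this index is locally constant, combined with the fact that only finitely many such pieces are needed by compactness, gives the bound and continuity'' is circular: you are assuming $\alpha$ is locally constant in order to produce the clopen cover whose finiteness is then supposed to prove that $\alpha$ is locally constant. Compactness alone does nothing here---the level sets $C_n=\alpha^{-1}(n)$ are closed and partition $C$, but a priori there are infinitely many of them and none need be open. This continuity statement is exactly the nontrivial content (it is the discrete analogue of the Kwapisz friction result, equivalently the fact that for a minimal Cantor system every orbit-preserving homeomorphism lies in the \emph{topological} full group), and it is precisely what fails in the merely transitive counterexample you cite.

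Your approach can be completed, but not by compactness: one argues via Baire category that some $C_{n_0}$ has nonempty interior, then uses the cocycle identity $\alpha(T_C x)-\alpha(x)=r_D(\phi x)-r_C(x)$ (with locally constant right-hand side) to see that the set where $\alpha$ is locally constant is open, nonempty, and $T_C$-invariant, hence all of $C$ by minimality. That is a genuine argument, not a compactness bookkeeping step, and it should replace the hand-wave in your last paragraph. Alternatively, and more in the spirit of the paper, you can simply cite the Kwapisz result and skip the flow-code detour entirely.
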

\begin{proof}
That $(1)$ implies $(2)$ implies $(3)$ is straightforward. For $(3)$ implies $(1)$, suppose $f \in \homeo \SuspT$ maps every leaf to itself. Then there exists some $\beta \colon \SuspT \to \mathbb{R}$ satisfying
$$f(z) = z + \beta(z)$$
for all $z \in \SuspT$, and the main Theorem of \cite{KwapiszFriction} shows that, when $(X,T)$ is minimal, the map $\beta$ is continuous. One can verify that for all $t$, the map
$$f_{t}(z) = z + t \cdot \beta(z)$$
lies in $\homeo \SuspT$ (see for example \cite[Prop. 3.1]{BCEisotopy}, together with \cite{BCEisotopycorrigendum}) and hence the family $f_{t}$ provides an isotopy from $f$ to the identity.
\end{proof}

Given a discrete cross section $C \subset X$, any automorphism $\phi \in \Aut(C,T_{C})$ defines a flow code for $(X,T)$, which in turn induces a self-flow equivalence $S_{\phi}$. This gives a well-defined homomorphism
\begin{gather*}
\Psi_{C} \colon \Aut(T_{C}) \to \mathcal{M}(T)\\
\Psi_{C} \colon \varphi \mapsto [S_{\phi}].
\end{gather*}
The most fundamental case is when $C  = X$, which we denote by
$$\Psi \colon \Aut(T) \to \mathcal{M}(T).$$
The map $\Psi$ can also be defined explicitly by
$$\Psi(\varphi)([x,t]) = [\varphi(x),t].$$
\indent We call a Cantor system $(X,T)$ topologically transitive if it has a dense orbit.
\begin{proposition}\label{prop:autembedding}
Let $(X,T)$ be an aperiodic topologically transitive Cantor system and $C \subset X$ be a discrete cross section. Then $\ker \Psi_{C} = \langle T_{C} \rangle$. In particular, there is an embedding
$$\Psi \colon \Aut(T) / \langle T \rangle \hookrightarrow \mathcal{M}(T).$$
\end{proposition}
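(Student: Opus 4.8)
The plan is to analyze the kernel of $\Psi_C \colon \Aut(T_C) \to \mathcal{M}(T)$ directly, reducing everything to the characterization of maps isotopic to the identity. By Proposition \ref{prop:leafaction} (applied to the minimal — or here merely topologically transitive and aperiodic — return system $(C,T_C)$, or more precisely by the criterion that $S_\phi \in \homeo_0\SuspT$ iff $S_\phi(z)=z+\eta(z)$ for continuous $\eta$), the class $[S_\phi]$ is trivial precisely when $S_\phi$ fixes every leaf of $\Sigma_T X$. Since $S_\phi$ is built from the flow code associated to $\phi \in \Aut(T_C)$, and on the cross section $\Gamma_C = \{[(x,0)] : x \in C\}$ it sends the leaf through $(x,0)$ to the leaf through $(\phi(x),0)$, the map $S_\phi$ fixes every leaf exactly when $\phi(x)$ lies on the same $T$-orbit as $x$ for every $x \in C$ — equivalently, $\phi(x) \in \{T^n x : n \in \mathbb{Z}\}$ for all $x$. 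So the first step is to show: $\phi \in \ker \Psi_C$ iff $\phi$ agrees pointwise with some (a priori non-continuous, point-dependent) power of $T$.

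The second and main step is to upgrade "$\phi$ is a pointwise power of $T$" to "$\phi$ is a fixed power of $T_C$." Define $m \colon C \to \mathbb{Z}$ by $\phi(x) = T^{m(x)}(x)$; aperiodicity makes $m$ well-defined. Since $\phi$ commutes with $T_C$ and $T_C$ is a power of $T$, one checks that $m$ is constant along $T_C$-orbits; alternatively, from $\phi(T_C x) = T_C \phi(x)$ one gets a cocycle-type identity forcing $m(T_C x) = m(x)$. The function $m$ need not be continuous a priori, but here is where topological transitivity enters: on a dense orbit $m$ is constant (being orbit-invariant and the orbit being a single $T_C$-orbit), so $m$ is locally constant on a dense set, and a short argument — using that $\phi$ and $T$ are homeomorphisms and $X$ (hence $C$) is zero-dimensional, so $\{x : m(x) = k\}$ is the clopen set $\phi^{-1}(T^k$-graph$)$ intersected appropriately — shows $m$ is actually locally constant, then globally constant by density of the orbit. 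Hence $m \equiv k$ for some fixed $k$, i.e. $\phi = T^k$ as a map on $C$. Re-expressing $T^k$ restricted to $C$ in terms of the return map: one shows this forces $\phi = T_C^{j}$ for some $j \in \mathbb{Z}$ (the power $k$ must be a "return-compatible" power, because $\phi$ maps $C$ into $C$), giving $\ker \Psi_C \subseteq \langle T_C \rangle$. The reverse inclusion $\langle T_C \rangle \subseteq \ker \Psi_C$ is immediate since $S_{T_C}$ is the time-$r_C$ flow map, which is visibly isotopic to the identity (it fixes every leaf).

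For the second sentence, specialize to $C = X$, so $T_C = T$: the computation above gives $\ker \Psi = \langle T \rangle$, and therefore $\Psi$ descends to an injection $\Aut(T)/\langle T \rangle \hookrightarrow \mathcal{M}(T)$.

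I expect the main obstacle to be the regularity upgrade in the second step — promoting the pointwise identity $\phi(x) = T^{m(x)}(x)$ to a statement with $m$ globally constant. The subtlety is that without minimality we only have a dense orbit rather than every orbit dense, so one must argue carefully that the level sets of $m$ are clopen (using the homeomorphism property of $\phi$ and the zero-dimensionality of $X$ to see each $\{m = k\}$ is open, and that there are "locally finitely many" such levels since $\phi$ is continuous on compact $C$), after which density of the transitive orbit pins $m$ down. The final bookkeeping translating "$\phi = T^k$ on $C$" into "$\phi = T_C^j$" is routine but should be stated, since it is what makes the kernel exactly $\langle T_C \rangle$ rather than something larger.
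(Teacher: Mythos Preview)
Your overall strategy is sound and close to the paper's, but there is a concrete error in the second step. You define $m \colon C \to \mathbb{Z}$ by $\phi(x) = T^{m(x)}(x)$ and assert that $m$ is constant along $T_C$-orbits, justifying this by ``$T_C$ is a power of $T$'' and a ``cocycle-type identity forcing $m(T_C x) = m(x)$''. Both justifications fail: $T_C$ is not a fixed power of $T$ but $T_C(x) = T^{r_C(x)}(x)$ with $r_C$ varying, and the computation from $\phi T_C = T_C \phi$ actually gives
\[
m(T_C x) \;=\; m(x) + r_C(\phi(x)) - r_C(x),
\]
which need not vanish. So the $T$-power function $m$ is \emph{not} $T_C$-invariant, and the density argument built on it does not go through as written.

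The fix is to track $T_C$-powers rather than $T$-powers. Since $\phi(x)$ lies on the $T$-orbit of $x$ and both $x,\phi(x)\in C$, in fact $\phi(x)$ lies on the $T_C$-orbit of $x$, so one may write $\phi(x) = T_C^{n(x)}(x)$ for a unique $n(x)\in\mathbb{Z}$; for \emph{this} function the same calculation yields $n(T_C x) = n(x)$. The paper carries this out even more directly: choose a single $x\in C$ whose $T_C$-orbit is dense, write $\phi(x)=T_C^{j}(x)$, and observe that the automorphism $T_C^{-j}\phi$ fixes $x$ and commutes with $T_C$, hence fixes the entire dense orbit and is the identity. This bypasses your detour through local constancy of $m$, which was both unnecessary (constancy on a dense set together with continuity of $\phi$ and $T_C^{j}$ already forces equality everywhere) and, as stated, resting on a false premise.
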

\begin{proof}
To see that $T_{C} \in \ker \Psi_{C}$, first note the map $S_{T_{C}}$ sends each leaf to itself. Moreover, the function $\beta \colon \Sigma_{T}X \to \mathbb{R}$ satisfying
\begin{equation*}
S_{T_{C}}(z) = z + \beta(z)
\end{equation*}
is continuous, from which it follows (see \cite[Prop. 3.1]{BCEisotopy}, together with \cite{BCEisotopycorrigendum}) that $S_{T_{C}}$ is isotopic to the identity.\\
\indent Conversely, suppose $\Psi_{C}(\phi)$ is isotopic to the identity. Since $(X,T)$ is topologically transitive, we may choose $x \in C$ such that the $T_{C}$-orbit of $x$ is dense in $C$. Then $\phi(x) = T^{k}(x)$ for some $k$, and since $\phi(x) \in C$, this means $\phi(x) = T_{C}^{j}(x)$ for some $j$. Thus the automorphism $T_{C}^{-j}\phi \in \Aut(T_{C})$ fixes $x$. Since $x$ has a dense $T_{C}$-orbit in $C$, this implies $\phi = T_{C}^{j}$.
\end{proof}
The following proposition, while unsurprising, is useful.
\begin{proposition}\label{prop:conjisoauto}
Let $(X,T)$ be a minimal Cantor system and let $f \in \homeo \SuspT$. If $f(z+t) = f(z)+t$ for all $z \in \Sigma_{T}X, t \in \mathbb{R}$, then $[f] \in \Im \Psi$.
\end{proposition}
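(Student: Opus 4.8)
The plan is to show that $f$, after flowing by a suitable constant, literally equals $S_{\phi}$ for some $\phi \in \Aut(T)$; since $\Im \Psi$ is closed under isotopy, this suffices. First I would introduce the continuous ``time mod $1$'' map $p \colon \SuspT \to \mathbb{R}/\mathbb{Z}$ sending $(x,t) \mapsto t \pmod 1$, which is well defined precisely because of the relation $(x,t) \sim (T^{n}x, t-n)$. The hypothesis $f(z+t) = f(z)+t$ gives $p(f(z+t)) - p(z+t) = p(f(z)) - p(z)$, so the function $z \mapsto p(f(z)) - p(z)$ is a continuous map $\SuspT \to \mathbb{R}/\mathbb{Z}$ that is constant along every leaf. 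Since $(X,T)$ is minimal, some leaf is dense in $\SuspT$, and a continuous function constant on a dense set is constant; hence there is a fixed $c \in \mathbb{R}/\mathbb{Z}$ with $p(f(z)) = p(z) + c$ for all $z \in \SuspT$.

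Next, lift $c$ to a representative in $[0,1)$ and set $f' = \Upsilon(-c) \circ f$, i.e.\ $f'(z) = f(z) - c$. Then $f_{s}(z) = f(z) - sc$ is a path in $\homeo \SuspT$ from $f$ to $f'$ (flowing is an isotopy, and each $f_{s}$, being a composition of flow maps with $f$, still lies in $\homeo \SuspT$), so $[f] = [f']$ in $\mathcal{M}(T)$. Moreover $f'$ again commutes with the flow, and by construction $p \circ f' = p$, so for every $s$ one has $z \in p^{-1}(s) \iff f'(z) \in p^{-1}(s)$; thus $f'$ restricts to a continuous bijection of the compact Hausdorff space $\Gamma = p^{-1}(0)$, hence to a homeomorphism of $\Gamma$. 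Identifying $\Gamma$ with $X$ via $x \leftrightarrow (x,0)$, this yields a homeomorphism $\phi \colon X \to X$, and comparing $f'(Tx,0) = f'\big((x,0)+1\big) = f'(x,0)+1$ shows $(\phi(Tx),0) = (T\phi(x),0)$, so $\phi \in \Aut(T)$.

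Finally, both $f'$ and $S_{\phi}$ commute with the flow, and they agree on $\Gamma$: by the explicit formula $S_{\phi}([x,t]) = [\phi(x),t]$ one has $S_{\phi}(x,0) = (\phi(x),0) = f'(x,0)$. Since every point of $\SuspT$ is of the form $(x,0)+t$ for some $x \in X$, $t \in \mathbb{R}$, equivariance under the flow forces $f' = S_{\phi}$ everywhere. Therefore $[f] = [f'] = [S_{\phi}] = \Psi(\phi) \in \Im \Psi$.

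The only real subtlety is the reduction in the first paragraph: one must be careful that $p$ is genuinely continuous (the second coordinate is only defined modulo $1$ on $\SuspT$, which is exactly why the target must be $\mathbb{R}/\mathbb{Z}$), and it is precisely minimality that forces the flow-invariant function $z \mapsto p(f(z)) - p(z)$ to be constant --- this is the single place where the dynamics of $(X,T)$ enters. Everything afterward is a point-set unwinding of the definitions of $\Gamma$, $S_{\phi}$, and $\Psi$.
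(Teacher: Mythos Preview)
Your proof is correct and follows essentially the same line as the paper's own argument: find a constant $s$ with $f(\Gamma)-s=\Gamma$, flow by $-s$, and read off the automorphism $\phi$ from the resulting action on $\Gamma\cong X$. The paper states the existence of such an $s$ in one sentence without justification, whereas you supply the mechanism via the continuous map $p\colon\SuspT\to\mathbb{R}/\mathbb{Z}$ and correctly isolate minimality as the ingredient forcing the leafwise-constant function $z\mapsto p(f(z))-p(z)$ to be globally constant; this is exactly the content hidden in the paper's phrase ``the hypotheses imply.''
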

\begin{proof}
The hypotheses imply there exists $0 \le s < 1$ such that $f(\Gamma) - s = \Gamma$. For any $x \in X$ there is then a unique point $y \in X$ such that $f(x,0) - s = (y,0)$, and the map $\phi_{f}(x) = y$ defines an automorphism of $(X,T)$ such that $\Psi(\phi_{f}) = [f]$ in $\mathcal{M}(T)$.
\end{proof}

\begin{example}
Example 3.9 in \cite{BLR88} constructs a minimal subshift $(Y,\sigma)$ for which $\Aut(\sigma) \cong \mathbb{Q}$ and the isomorphism takes $\sigma$ to $1 \in \mathbb{Q}$. Proposition \ref{prop:autembedding} then implies there is an embedding $\mathbb{Q} / \mathbb{Z} \longrightarrow \mathcal{M}(\sigma)$. Since any subgroup of a residually finite group must also be residually finite, and $\mathbb{Q}/\mathbb{Z}$ is not residually finite, the mapping class group $\mathcal{M}(\sigma)$ of $(Y,\sigma)$ is not residually finite.
\end{example}

Suppose $(X,T)$ is an aperiodic Cantor system. Given a self-flow equivalence $f \in \homeo \SuspT$, the map
$$\alpha_{f} \colon \SuspT \times \mathbb{R} \to \mathbb{R}$$
defined implicitly by
$$f(z+t) = f(z) + \alpha_{f}(z,t)$$
is well-defined, and satisfies the cocycle condition
\begin{equation*}
\alpha_{f}(z,t_{1} + t_{2}) = \alpha_{f}(z,t_{1}) + \alpha_{f}(z+t_{1},t_{2}).
\end{equation*}
For a flow code $(\varphi, C, D)$, the cocycle $\alpha_{S_\varphi}$ is piecewise linear, where the slopes are given by the ratios of the return times. Given $x_0 \in C$,  for $0 < t < r_{C}(x_{0})$ the slope of $\alpha_{S_\varphi}((x_0,0),t)$ is given by
\begin{equation*}
\frac{r_{D}(\phi x_{0})}{r_{C}(x_{0})}.
\end{equation*}
More generally, for
\begin{equation*}
\sum_{i=0}^{k-1}r_C(T_C^i x_0) < t < \sum_{i=0}^{k}r_C(T_C^i x_0), \hspace{.07in} k \in \mathbb{N}
\end{equation*}
the slope of $\alpha_{S_{\phi}}((x_{0},),t)$ is given by
\begin{equation}\label{eqn:linearcocycles}
\frac{r_D(T_D^k\varphi x_0)}{r_C(T_C^k x_0)}.
\end{equation}
while for $t<0$, for times
\begin{equation*}
\sum_{i=-k}^{-1} -r_C(T_C^i x_0) < t < \sum_{i=-k+1}^{-1} - r_C(T_C^i x_0), \hspace{.07in} k \in \mathbb{N}
\end{equation*}
the slope is given by
\begin{equation}
\frac{r_D(T_D^{-k}\varphi x_0)}{r_C(T_C^{-k} x_0)}.
\end{equation}


We end this section with a discussion of $\mathcal{M}(T)$ when $T$ is an odometer system. This is not a new result, and has appeared in some form or another in various works; we refer the reader to \cite{KwapiszSolenoids}, and the discussion at the end of Section 1 in \cite{KwapiszSolenoids}, for a list of some.
\begin{example}[Odometers]\label{ex:odometers}
Given an infinite sequence of prime numbers $P = \{p_{i}\}_{i=1}^{\infty}$, let $Q = \{q_{j} = \prod_{i=1}^{j} p_{i}\}_{j=1}^{\infty}$. The compact abelian group
\begin{gather*}
\mathcal{O}_{P}  = \varprojlim \{\mathbb{Z} / q_{k} \mathbb{Z},\pi_{k}\}\\
\pi_{k} \colon \mathbb{Z} /q_{k} \mathbb{Z} \to \mathbb{Z} / q_{k-1} \mathbb{Z}\\
\pi_{k} \colon a \textnormal{ mod } q_{k} \mapsto a \textnormal{ mod } q_{k-1}.
\end{gather*}
together with the translation map
\begin{gather*}
T_{P} \colon \mathcal{O}_{P} \to \mathcal{O}_{P}\\
T_{P} \colon x \mapsto x+(1,1,\ldots)
\end{gather*}
form a minimal equicontinuous Cantor system $(\mathcal{O}_{P},T_{P})$ which we refer to as the $P$-odometer. The suspension $\Sigma_{T_{P}}\mathcal{O}_{P}$ is homeomorphic to the $P$-adic solenoid
\begin{gather*}
\mathcal{S}_{P} = \varprojlim\{S^{1},w_{p_{k}}\}\\
w_{p_{k}} \colon S^{1} \to S^{1}, \hspace{.19in} w_{p_{k}} \colon z \mapsto z^{p_{k}}
\end{gather*}
We give here a presentation of $\mathcal{M}(T_{P})$ based on \cite[Theorem 1]{KwapiszSolenoids}.\\
\indent There is a short exact sequence
\begin{gather}\label{eqn:solenoidseq}
1 \to \Aut(T_{P}) / \langle T_{P} \rangle \to \mathcal{M}(T_{P}) \to \Aut(\mathcal{G}_{T_{P}},\mathcal{G}_{T_{P}}^{+}) \to 1
\end{gather}
where $\Aut(\mathcal{G}_{T_{P}},\mathcal{G}_{T_{P}}^{+})$ denotes the group of automorphisms of the abelian group
$$\mathcal{G}_{T_{P}} = \{v \in \mathbb{Q} \mid q_{k}\cdot v \in \mathbb{Z} \textnormal{ for some } k\}$$
preserving the non-negative cone
$$\mathcal{G}_{T_{P}}^{+} = \{v \in \mathbb{Q} \mid q_{k}\cdot v \in \mathbb{Z}_{+} \textnormal{ for some } k\}.$$
(The ordered group $(\mathcal{G}_{T_{P}},\mathcal{G}_{T_{P}}^{+})$ is isomorphic to the ordered group of coinvariants defined in Section \ref{sec:coinvariants}.)
When $p$ is a prime which appears infinitely often in $P$ the map
\begin{gather*}
\phi_{p} \colon \mathcal{O}_{P} \to \mathcal{O}_{P}\\
\phi_{p} \colon (x_{1},x_{2},\ldots) \longmapsto (p\cdot x_{1},p \cdot x_{2},\ldots)
\end{gather*}
is injective and $\phi_{p} \colon \mathcal{O}_{p} \to \Im \phi_{p}$ defines a flow code, giving a self-flow equivalence $S_{\phi_{p}} \in \homeo \SuspT$. (Our $S_{\phi_{p}}$ agrees with the Frobenius automorphisms defined in \cite{KwapiszSolenoids}.)

As a group $\Aut(\mathcal{G}_{T_{P}},\mathcal{G}_{T_{P}}^{+})$ is generated by the set of automorphisms
$$\{m_{p} \colon x \longmapsto p \cdot x \mid p \textnormal{ appears infinitely often in } P\}$$
and the map \begin{gather*}
\Aut(\mathcal{G}_{T_{P}},\mathcal{G}_{T_{P}}^{+}) \to \mathcal{M}(T_{P})\\
m_{p} \longmapsto S_{\phi_{p}}
\end{gather*}
defines a right-splitting map for the sequence \eqref{eqn:solenoidseq}. The group $\Aut(T_{P})$ is known to be isomorphic to $\mathcal{O}_{P}$ (see \cite[Lemma 5.9]{DDMPlowcomp}), so we have a semidirect product presentation for the mapping class group
\begin{equation*}
\mathcal{M}(T_{P}) \cong \mathcal{O}_{P} / \langle (1,1,\ldots) \rangle \rtimes \Aut(\mathcal{G}_{T_{P}},\mathcal{G}_{T_{P}}^{+}).
\end{equation*}
As an explicit example, for the set of primes $P_{2,3} = \{p_{i}\}$ defined by
$$
\begin{cases}
p_{i} = 2 & \textnormal{ if } i \textnormal{ is even}\\
p_{i} = 3 & \textnormal{ if } i \textnormal{ is odd}\\
\end{cases}
$$
we have $\Aut(\mathcal{G}_{T_{P}},\mathcal{G}_{T_{P}}^{+}) \cong \Aut(\mathbb{Z}[\frac{1}{6}],\mathbb{Z}_{+}[\frac{1}{6}]) \cong \mathbb{Z}^{2}$ and
\begin{equation*}
\mathcal{M}(T_{P_{2,3}}) \cong \mathcal{O}_{P_{2,3}} / \langle (1,1,\ldots) \rangle \rtimes \mathbb{Z}^{2}.
\end{equation*}
\end{example}

\section{The Coinvariants Representation}\label{sec:coinvariants}
Let $(X,T)$ be a Cantor system, and let $C(X,\mathbb{Z})$ denote the abelian
group of continuous integer valued functions on $X$. The map $\partial \colon
\gamma \mapsto \gamma - \gamma \circ T^{-1}$ defines a homomorphism $\partial \colon
C(X,\mathbb{Z}) \to C(X,\mathbb{Z})$, and we define the group of coinvariants
to be the abelian group
\begin{equation*}
\mathcal{G}_{T} = C(X,\mathbb{Z}) /
\textnormal{Image}(\partial).
\end{equation*}
The group $\mathcal{G}_{T}$ contains a positive cone
\begin{equation*}
\mathcal{G}_{T}^{+} = \{ [\gamma] \in \mathcal{G}_{T} \mid \textnormal{ there exists non-negative } \gamma^{\prime} \in C(X,\mathbb{Z}) \textnormal{ such that } [\gamma] = [\gamma^{\prime}]\},
\end{equation*}
and together with the distinguished order unit $[1]$ (the class of the constant function $1 \in C(X,\mathbb{Z})$), the triple $(\mathcal{G}_{T},\mathcal{G}_{T}^{+},[1])$ becomes a unital preordered group (in the sense of \cite[Section 1.2]{BHOrdered}). When $(X,T)$ is minimal, $\mathcal{G}_{T}$ is a simple dimension group (see \cite{GPS1}). By an isomorphism of ordered groups we mean an isomorphism taking the positive cone onto the positive cone; if the isomorphism in addition takes the distinguished order unit to the distinguished order unit, we say it is an isomorphism of unital ordered groups.   \\
\indent The pair $(\mathcal{G}_{T},\mathcal{G}_{T}^{+})$ is an invariant of flow equivalence: if $(X,T)$ and $(X^{\prime},T^{\prime})$ are flow equivalent, then $(\mathcal{G}_{T},\mathcal{G}_{T}^{+})$ and $(\mathcal{G}_{T^{\prime}},\mathcal{G}_{T^{\prime}}^{+})$ are isomorphic. A proof of this can be found in \cite[Theorem 1.5]{BHOrdered}. Note that the triple $(\mathcal{G}_{T},\mathcal{G}_{T}^{+},[1])$ is not in general an invariant of flow equivalence; in general, a flow equivalence may not preserve the distinguished order unit. \\
\indent Let $\pi^{1}(Y)$ denote the group $[Y,S^{1}]$ of homotopy classes of maps from $Y$ to $S^{1}$ (the group structure being given by pointwise multiplication $[f]+[g] = [fg]$). For a compact Hausdorff space $Y$, the group $\pi^{1}(Y)$ is isomorphic to the first integral \v{C}ech cohomology group $\check{H}^{1}(Y,\mathbb{Z})$ (see \cite{KrasinkiewiczMappings}).\\
\indent Given a roof function $r \colon X \to \mathbb{R}$ and map $\eta \colon \Sigma_{T}^{r}X \to S^{1}$, given $z \in \Sigma_{T}^{r}X$ there exists a continuous function $g_{z} \colon \mathbb{R} \to \mathbb{R}$ satisfying $\eta(z+t) = \eta(z)e^{2 \pi i g_{z}(t)}$ for all $t \in \mathbb{R}$. Let $C_{+}(\Sigma_{T}^{r}X,S^{1})$ denote the set of maps $\eta \colon \Sigma_{T}^{r}X \to S^{1}$ such that for all $z \in \Sigma_{T}^{r}X$, $g_{z}$ is non-decreasing. The group $\pi^{1}(\Sigma_{T}^{r}X)$ becomes a pre-ordered group by defining the positive cone associated to the `winding order'
\begin{equation*}
\pi^{1}_{+}(\Sigma_{T}^{r}X) = \{ [\eta] \mid \eta \in C_{+}(\Sigma_{T}^{r}X,S^{1})\} \subset \pi^{1}(\Sigma_{T}^{r}X).
\end{equation*}
\indent Given $\gamma \in C(X,\mathbb{Z})$, define $\eta_{\gamma} \in C(\Sigma_{T}X,S^{1})$ by
\begin{equation*}
\eta_{\gamma}(x,t) = e^{2 \pi i t \gamma(x)}, \hspace{.07in} 0 \le t < 1.
\end{equation*}
\indent The following result relating the groups $\mathcal{G}_{T}$ and $\pi^{1}(\Sigma_{T}X)$ is classical, though the claim regarding the order structures is less so. A proof may be found in \cite[Proposition 4.5]{BHOrdered}.
\begin{proposition}\label{prop:exponentialiso}
For a Cantor system $(X,T)$ the map
\begin{equation}
\begin{gathered}
\mathcal{G}_{T} \to \pi^{1}(\Sigma_{T}X)\\
[\gamma] \longmapsto [\eta_{\gamma}]
\end{gathered}
\end{equation}
is an isomorphism of groups. When $(X,T)$ is minimal, the isomorphism respects the order structures, giving an isomorphism of ordered groups
\begin{equation*}
(\mathcal{G}_{T},\mathcal{G}_{T}^{+}) \cong (\pi^{1}(\Sigma_{T}X),\pi^{1}_{+}(\Sigma_{T}X)).
\end{equation*}
\end{proposition}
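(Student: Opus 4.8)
\bigskip
The plan is to treat the two assertions separately: the group isomorphism (which is classical and I would only sketch) and the order compatibility in the minimal case (which is the substantive part).

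\emph{The group isomorphism.} I would first check that $\eta_\gamma$ is a well-defined continuous map $\Sigma_{T}X\to S^{1}$: integrality of $\gamma$ makes the formula agree across the identification $(x,1)\sim(Tx,0)$, and local constancy of $\gamma$ gives continuity. The assignment $\gamma\mapsto[\eta_\gamma]$ is visibly a homomorphism $C(X,\mathbb{Z})\to\pi^{1}(\Sigma_{T}X)$, and it kills $\textnormal{Image}(\partial)$: for $\gamma=\partial\delta$ the function $(x,t)\mapsto t\,\partial\delta(x)+\delta(T^{-1}x)$ (for $0\le t<1$) extends continuously over the identification and exhibits $\eta_{\partial\delta}$ as $e^{2\pi i(\cdot)}$, hence nullhomotopic. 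For injectivity: a nullhomotopic $\eta_\gamma$ lifts to a continuous $g\colon\Sigma_{T}X\to\mathbb{R}$; on each slice $\{x\}\times[0,1)$ the quantity $g(x,t)-t\gamma(x)$ is continuous and $\mathbb{Z}$-valued, hence equal to $\beta(x):=g(x,0)\in\mathbb{Z}$, and comparing the two continuous extensions at the identification forces $\gamma=\partial(\beta\circ T)$, so $[\gamma]=0$. For surjectivity: given $f\colon\Sigma_{T}X\to S^{1}$, I would use $\check{H}^{1}(X)=0$ together with the fact that $\Gamma$ has a neighborhood homeomorphic to $\Gamma\times(-\varepsilon,\varepsilon)$ (so $(\Sigma_{T}X,\Gamma)$ has the homotopy extension property) to homotope $f$ to $f_{1}$ with $f_{1}|_{\Gamma}\equiv 1$; then the winding number of the loop $t\mapsto f_{1}(x,t)$, $t\in[0,1]$, defines $\gamma_{f_{1}}\in C(X,\mathbb{Z})$, and interpolating chosen lifts slice-by-slice gives $f_{1}\simeq\eta_{\gamma_{f_{1}}}$. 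The identification $\pi^{1}(\Sigma_{T}X)\cong\check{H}^{1}(\Sigma_{T}X,\mathbb{Z})$ is quoted from \cite{KrasinkiewiczMappings}.

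\emph{Order compatibility (minimal case).} One inclusion is immediate: if $[\gamma]\in\mathcal{G}_{T}^{+}$, pick $\gamma'\ge 0$ with $[\gamma']=[\gamma]$; along each leaf the canonical lift of $\eta_{\gamma'}$ is piecewise linear with slopes $\gamma'(T^{k}x_{0})\ge 0$, so $\eta_{\gamma'}\in C_{+}(\Sigma_{T}X,S^{1})$ and $[\eta_\gamma]=[\eta_{\gamma'}]\in\pi^{1}_{+}(\Sigma_{T}X)$. For the converse, take $\eta\in C_{+}(\Sigma_{T}X,S^{1})$ with $[\eta]=[\eta_\gamma]$, and write $\eta=\eta_\gamma\cdot e^{2\pi i g}$ with $g\colon\Sigma_{T}X\to\mathbb{R}$ continuous, hence bounded. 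The winding of $\eta$ along the leaf segment from $(x,0)$ to $(T^{n}x,0)$ equals $S_{n}(x)+g(T^{n}x,0)-g(x,0)$, where $S_{n}(x)=\sum_{k=0}^{n-1}\gamma(T^{k}x)$; since $\eta\in C_{+}$ this winding is $\ge 0$, and evaluating at every $(T^{a}x,0)$ shows $S_{n}(x)\ge -2\|g\|_{\infty}$ for all $x\in X$ and all $n\ge 0$. Thus the claim reduces to the dynamical statement: if $(X,T)$ is minimal and the Birkhoff sums of $\gamma\in C(X,\mathbb{Z})$ are uniformly bounded below, then $[\gamma]\in\mathcal{G}_{T}^{+}$.

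\emph{The dynamical lemma and the main obstacle.} To prove this I would set $M:=-\inf_{x,\,n\ge 0}S_{n}(x)\in\mathbb{Z}_{\ge 0}$ and form the candidate transfer function $\beta(x):=\sup_{n\ge 0}(-S_{n}(x))\in\{0,1,\dots,M\}$; a short computation gives $\beta(x)=\max\bigl(0,\beta(Tx)-\gamma(x)\bigr)$, whence $\gamma-\partial(\beta\circ T)\ge 0$ while $[\gamma-\partial(\beta\circ T)]=[\gamma]$, so $[\gamma]\in\mathcal{G}_{T}^{+}$ --- \emph{provided $\beta$ is continuous}. This is the main obstacle, and essentially the only place minimality is genuinely used. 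I expect to handle it by a dichotomy: the functions $\beta_{N}:=\max_{0\le n\le N}(-S_{n})$ are continuous and increase to $\beta$; if they stabilize, $\beta$ is continuous and we are done. If they do not stabilize, a compactness argument produces a point $y$ all of whose backward Birkhoff sums $\sum_{i=1}^{\ell}\gamma(T^{-i}y)$ lie in the finite set $\{-M,\dots,-1\}$; since the backward orbit of $y$ is dense (minimality), this forces $S_{n}$ to be bounded \emph{on both sides}, uniformly over $X$, and then a Gottschalk--Hedlund type argument (using minimality to conclude the transfer function may be taken $\mathbb{Z}$-valued) shows $\gamma\in\textnormal{Image}(\partial)$, i.e. $[\gamma]=0\in\mathcal{G}_{T}^{+}$. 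In either case $[\eta_\gamma]\in\pi^{1}_{+}(\Sigma_{T}X)\Rightarrow[\gamma]\in\mathcal{G}_{T}^{+}$, completing the identification of ordered groups.
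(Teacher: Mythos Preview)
The paper does not supply its own proof of this proposition: it remarks that the group isomorphism is classical and defers entirely to \cite[Proposition~4.5]{BHOrdered} for both assertions, including the order statement. So there is no in-paper argument to compare against.

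Your sketch is correct and self-contained where the paper relies on a citation. The group-isomorphism portion is standard and your outline is fine. For the order claim, the reduction to the dynamical lemma ``Birkhoff sums of $\gamma$ uniformly bounded below $\Rightarrow [\gamma]\in\mathcal G_T^+$'' is the right move, and your dichotomy on stabilization of $\beta_N$ works. Two small points worth tightening. First, in the non-stabilizing branch, after passing to the limit point $y$ the reason the backward sums $B_\ell(y)=\sum_{i=1}^\ell\gamma(T^{-i}y)$ lie in $\{-M,\dots,-1\}$ and not merely in $(-\infty,-1]$ is the identity $B_\ell(y)=S_\ell(T^{-\ell}y)\ge -M$, which feeds the \emph{forward} hypothesis back in; this deserves to be said explicitly. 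Second, once Gottschalk--Hedlund produces a continuous real $\delta$ with $\gamma=\delta\circ T-\delta$, the upgrade to a $\mathbb Z$-valued transfer function is that $e^{2\pi i\delta}$ is $T$-invariant and hence constant by minimality; you allude to this, but it is the one spot where minimality reenters. (In fact that conclusion makes $\beta$ itself continuous, so the non-stabilizing branch is vacuous; your ``in either case'' phrasing still yields the result.)
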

In general, the isomorphism $\mathcal{G}_{T} \to \pi^{1}(\Sigma_{T}X)$ in Proposition 4.1 may not be an order isomorphism (see \cite[Example 4.7]{BHOrdered}).\\
\indent Two maps $f,g \in \homeo(\SuspT)$ which are isotopic induce the same map on
$\pi^{1}(\SuspT)$.
It follows from Proposition \ref{prop:exponentialiso} that any $[f] \in \mathcal{M}(T)$ induces an automorphism $f^{*} \in \Aut(\mathcal{G}_{T})$; the map $f^{*}$ will also preserve the positive cone $\mathcal{G}_{T}^{*}$, and there is a well-defined homomorphism
\begin{equation}\label{eqn:dimrep}
\begin{gathered}
\pi_{T} \colon \mathcal{M}(T) \to \Aut(\mathcal{G}_{T},\mathcal{G}_{T}^{+})\\
\pi_{T} \colon [f] \longmapsto f^{*}.
\end{gathered}
\end{equation}
Note that in general, $\pi_{T}$ does not take $\mathcal{M}(T)$ to $\Aut(\mathcal{G}_{T},\mathcal{G}_{T}^{+},[1])$; the induced map $f^{*}$ may not preserve the order unit $[1]$.\\
\indent Let us briefly describe how flow codes provide a convenient way to explicitly compute the map $f^{*}$ on $\mathcal{G}_{T}$ induced by some $[f] \in \mathcal{M}(T)$. Let $C \subset X$ be a discrete cross section. Given $[\gamma] \in C(X,\mathbb{Z})$, define $\gamma_{C}(x) \in C(C,\mathbb{Z})$ by
\begin{equation}
\gamma_{C}(x) =
\begin{cases}
\sum\limits_{i=0}^{r_{C}(x)-1}\gamma(T^{i}x) & \textnormal{ if } x \in C \\
     0 & \textnormal{ if } x \not \in C. \\
\end{cases}
\end{equation}
The map $res_{C} \colon [\gamma] \mapsto [\gamma_{C}]$ induces an isomorphism $res_{C} \colon \mathcal{G}_{T} \stackrel{\cong}\longrightarrow \mathcal{G}_{T_{C}}$ (a proof of this can be extracted from the proof of Theorem 1.5 in \cite{BHOrdered}). Now given $[f] \in \mathcal{M}(T)$, let $(\phi,C,D)$ be a flow code representing $[f]$. Then the map $f^{*} \colon \mathcal{G}_{T} \to \mathcal{G}_{T}$ coincides with the composition
\begin{equation}\label{eqn:moritaiso}
\mathcal{G}_{T} \xrightarrow{res_{D}} \mathcal{G}_{T_{D}} \xrightarrow{S_\phi^{*}} \mathcal{G}_{T_{C}} \xrightarrow{res_{C}^{-1}} \mathcal{G}_{T}.
\end{equation}
and the map $S_\phi^{*} \colon \mathcal{G}_{T_{D}} \to \mathcal{G}_{T_{C}}$ is given by $S_\phi^{*}([f]) = [f \circ S_\phi]_{\mathcal{G}_{T_C}}$.\\

\begin{remark}\label{remark:softlyintime}
There are minimal Cantor systems $(X,T)$ which admit automorphisms $\phi \in \Aut(T)$ such that $\pi_{T}(\phi)$ acts non-trivially on $(\mathcal{G}_{T},\mathcal{G}_{T}^{+})$. Many such examples may be found in \cite[Section 4]{MatuiFiniteOrder}. For a particular example (coming from \cite{MatuiFiniteOrder}), for the substitution defined on $\{0,1,2,3\}$ by
\begin{equation}
\xi \colon \hspace{.11in} 0 \mapsto 012230, \hspace{.11in} 1 \mapsto 123301, \hspace{.11in} 2 \mapsto 230012, \hspace{.11in} 3 \mapsto 301123
\end{equation}
the map $\phi \colon X_{\xi} \to X_{\xi}$ given by $\phi(x_{j}) = x_{j} + 1 \textnormal{ mod } 4$ defines an automorphism, and one check directly that it acts non-trivially on the dimension group (for details, we refer the reader to \cite[Section 4]{MatuiFiniteOrder}). We also note that, by a result of Matui \cite[Theorem 3.3]{MatuiFiniteOrder}, any finite subgroup in the kernel of the composition $\pi_{T} \circ \Psi \colon \Aut(T) \to \Aut(\mathcal{G}_{T},\mathcal{G}_{T}^{+})$ must be cyclic.
\end{remark}
\begin{theorem}\label{thm:coinvariantskernel}
Suppose $(X,T)$ is a minimal Cantor system. If $[f] \in \mathcal{M}(T)$ satisfies $f^{*}[1] = [1]$, then $[f] \in \Im \Psi$. In particular, $\ker \pi_{T} \subset \Im \Psi$.
\end{theorem}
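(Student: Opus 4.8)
My plan is to realize the order unit $[1]$ concretely inside $\pi^{1}(\SuspT)$, use the hypothesis $f^{*}[1]=[1]$ to see that the time–change cocycle $\alpha_{f}$ of $f$ is a coboundary (cohomologous to the trivial cocycle $t$), and then straighten $f$ by an isotopy into a homeomorphism commuting with the flow, so that Proposition \ref{prop:conjisoauto} finishes the argument.

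First I would transport the hypothesis across the isomorphism $\mathcal{G}_{T}\xrightarrow{\sim}\pi^{1}(\SuspT)$ of Proposition \ref{prop:exponentialiso}. Under it, $[1]$ corresponds to the class of $\eta_{1}(x,t)=e^{2\pi i t}$; checking the identification $(x,1)\sim(Tx,0)$ shows $\eta_{1}$ is continuous on $\SuspT$ and satisfies $\eta_{1}(z+s)=\eta_{1}(z)\,e^{2\pi i s}$ for all $z,s$. Since $f^{*}$ acts on $\pi^{1}$ by precomposition, $f^{*}[1]=[1]$ says precisely that $\eta_{1}\circ f$ is homotopic to $\eta_{1}$, so $(\eta_{1}\circ f)\,\eta_{1}^{-1}$ is null-homotopic and hence lifts to a continuous $\psi\colon\SuspT\to\mathbb{R}$ with $\eta_{1}(f(z))=\eta_{1}(z)\,e^{2\pi i\psi(z)}$.

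Next I would feed this into the defining relation $f(z+t)=f(z)+\alpha_{f}(z,t)$. Fixing $z$ and computing $\eta_{1}(f(z+t))$ two ways --- once as $\eta_{1}(z+t)\,e^{2\pi i\psi(z+t)}=\eta_{1}(z)\,e^{2\pi i(t+\psi(z+t))}$, once as $\eta_{1}(f(z))\,e^{2\pi i\alpha_{f}(z,t)}=\eta_{1}(z)\,e^{2\pi i(\psi(z)+\alpha_{f}(z,t))}$ --- and noting that $t\mapsto t+\psi(z+t)-\psi(z)-\alpha_{f}(z,t)$ is continuous, integer valued, and vanishes at $t=0$, one obtains the cocycle identity $\alpha_{f}(z,t)=t+\psi(z+t)-\psi(z)$ on all of $\SuspT\times\mathbb{R}$. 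I would then set $\tilde f(z)=\Upsilon(-\psi(z),f(z))=f(z)-\psi(z)$. Since $\tilde f=g\circ f$ with $g(w)=w-\psi(f^{-1}(w))$ of the form ``identity plus continuous real function'', $g\in\homeo_{0}\SuspT$ by the characterization of maps isotopic to the identity recalled in Section 2 (see \cite[Prop.~3.1]{BCEisotopy} together with \cite{BCEisotopycorrigendum}), so $[\tilde f]=[f]$ in $\mathcal{M}(T)$. Using the cocycle identity, $\tilde f(z+t)=f(z+t)-\psi(z+t)=f(z)+t-\psi(z)=\tilde f(z)+t$, i.e.\ $\tilde f$ commutes with the flow, so Proposition \ref{prop:conjisoauto} gives $[\tilde f]=[f]\in\Im\Psi$. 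The last sentence is then immediate: any element of $\ker\pi_{T}$ fixes all of $\mathcal{G}_{T}$, in particular fixes $[1]$, so it lies in $\Im\Psi$ by the first part.

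The one step that needs real care is the passage from the mere homotopy $\eta_{1}\circ f\simeq\eta_{1}$ to the exact identity $\alpha_{f}(z,t)=t+\psi(z+t)-\psi(z)$: one must keep the branches of the various $S^{1}$-valued maps and the chosen lift $\psi$ mutually consistent, and the reason it works is that $\eta_{1}$ winds along each leaf at unit speed, so that ``fixing $[1]$'' forces the time change of $f$ to differ from the trivial cocycle by a continuous coboundary --- which is exactly the flexibility needed to straighten $f$. Everything after that is routine bookkeeping plus the two quoted facts, Proposition \ref{prop:exponentialiso} and Proposition \ref{prop:conjisoauto}.
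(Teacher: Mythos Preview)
Your argument is correct and is genuinely different from the paper's proof. Both proofs end the same way --- showing that the time-change cocycle satisfies $\alpha_f(z,t)=t+\psi(z)-\psi(z+t)$ for some continuous $\psi$, then straightening $f$ to a flow-commuting map and invoking Proposition~\ref{prop:conjisoauto} --- but the routes to the coboundary equation differ. The paper represents $[f]$ by a flow code $(\phi,C,D)$, translates $f^*[1]=[1]$ via $\mathrm{res}_C$ into $r_D\circ\phi-r_C=\gamma-\gamma\circ T_C^{-1}$, uses this to bound $\alpha_f(z,t)-t$, and then appeals to the Gottschalk--Hedlund theorem (together with minimality) to upgrade boundedness to an honest coboundary. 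You instead read $f^*[1]=[1]$ directly inside $\pi^1(\SuspT)$ as a null-homotopy of $(\eta_1\circ f)\eta_1^{-1}$, lift to a continuous $\psi$, and extract the exact identity $\alpha_f(z,t)=t+\psi(z+t)-\psi(z)$ by comparing phases. Your route is shorter and bypasses both the flow-code bookkeeping and the ergodic-theoretic input of Gottschalk--Hedlund; the paper's route, on the other hand, keeps everything at the level of discrete cross sections and return times, which fits the flow-code framework used elsewhere.

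One small point to tighten: the characterization you cite says that a \emph{homeomorphism} of the form $w\mapsto w+\eta(w)$ is isotopic to the identity, not that every such map is a homeomorphism. So before invoking it for $g(w)=w-\psi(f^{-1}(w))$ you should note that $\tilde f$ is a homeomorphism (continuous; bijective on leaves since $\tilde f(z+t)=\tilde f(z)+t$ and $f$ sends distinct leaves to distinct leaves; hence a continuous bijection of a compact Hausdorff space), so $g=\tilde f\circ f^{-1}$ is as well. This is routine, but worth a sentence.
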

\begin{proof}
Choose a flow code $(\phi,C,D)$ representing $[f]$. We claim there exists some $\gamma \in C(X,\mathbb{Z})$ such that $r_{D}\circ \phi - r_{c} = \gamma - \gamma \circ T_{C}^{-1}$. By assumption we have $[f^{*}(1)]_{\mathcal{G}_{T}} = [1]_{\mathcal{G}_{T}}$ in $\mathcal{G}_{T}$. Using  \eqref{eqn:moritaiso}, this implies that in $\mathcal{G}_{T_{C}}$
\begin{gather*}
[r_{C}]_{\mathcal{G}_{T_{C}}} = \textnormal{res}_{C}([1]_{\mathcal{G}_{T}}) = \textnormal{res}_{C}(f^{*}([1]_{\mathcal{G}_{T}})) = [r_{D} \circ S_\phi]_{\mathcal{G}_{T_{C}}}
\end{gather*}
so there exists $\gamma \in C(C,\mathbb{Z})$ such that $r_{D} \circ S_\phi - r_{C} = \gamma - \gamma \circ T_{C}^{-1}$.
Let $x_{0} \in C$. For $t_{k} = \sum_{i=0}^{k-1}r_{C}(T_{C}^{i}x_{0})$ we have
\begin{gather*}
\alpha_{f}((x_{0},0),t_{k}) - t_{k} = \sum_{i=0}^{k-1}r_{D}(\phi T_{C}^{i}x_{0}) - t_{k} = \sum_{i=0}^{k-1}r_{D}(\phi T_{C}^{i}x_{0}) -\sum_{i=0}^{k-1}r_{C}(T_{C}^{i}x_{0})\\
= \sum_{i=0}^{k-1}\gamma(T_{C}^{i}x_{0}) = \gamma(T_{C}^{k-1}x_{0}) - \gamma(T_{C}^{-1}x_{0})
\end{gather*}
which is bounded for all $k$. Since $\alpha_{f}((x_{0},0),t)$ is piecewise linear with uniformly bounded slopes between $t_{k}$ and $t_{k+1}$, $\alpha_{f}((x_{0},0),t) - t$ is bounded for $t \ge 0$. Using minimality of the $\mathbb{R}$-action, it follows that for all $z \in \Sigma_{T}X$, $\alpha_{f}(z,t) - t$ is bounded. An application of Gottschalk-Hedlund (see e.g. \cite[Theorem C]{McCutcheon99}) then implies the cocycle $\alpha_{f}(z,t) - t$ is equal to a coboundary, and there exists $\eta \colon \Sigma_{T}X \to \mathbb{R}$ such that $\alpha_{f}(z,t) - t = \eta(z) - \eta(z+t)$ for all $z \in \Sigma_{T}X, t \in \mathbb{R}$.\\
\indent Given $z \in \SuspT$, define $\beta \colon \Sigma_{T}X \to \Sigma_{T}X$ by $\beta(z) = f(z) + \eta(z)$. The map $\beta$ is continuous, and satisfies
\begin{gather*}
\beta(z+t) = f(z+t) + \eta(z+t) = f(z) + \alpha_{f}(z,t) + t - \alpha_{f}(z,t) + \eta(z)\\
= f(z) + \eta(z) + t = \beta(z) + t
\end{gather*}
for all $z \in \Sigma_{T}X, t \in \mathbb{R}$. It is then straightforward to check that $\beta$ is both injective and surjective on leaves, and hence $\beta \in \homeo \Sigma_{T}X$. Thus $\beta$ is a conjugacy of $(\Sigma_{T}X,\mathbb{R})$ to itself, so $[\beta] \in \Im \Psi$ by Proposition \ref{prop:conjisoauto}. Since $f$ is isotopic to $\beta$, $[f] \in \Im \Psi$ as well.
\end{proof}
\begin{remark}
For any $\alpha \in \Aut(T)$ it is clear that $\Psi(\alpha)^{*}([1]) = [1]$. However, by Remark \ref{remark:softlyintime} it is not true in general that $\ker \pi_{T}$ and $\Im \Psi$ agree. Theorem \ref{thm:coinvariantskernel} shows that, instead, we have $\Im \Psi = \{[f] \in \mathcal{M}(T) \mid f^{*}[1] = [1]\}$.
\end{remark}
\begin{remark}
When $(X,\sigma)$ is a non-trivial irreducible shift of finite type, it follows from \cite[Corollary 3.3]{BCMCG} that the map $\mathcal{M}(\sigma) \to \Aut(\mathcal{G}_{\sigma},\mathcal{G}_{\sigma}^{+})$ is injective.
\end{remark}
As a consequence of Theorem \ref{thm:coinvariantskernel}, for a minimal Cantor system $(X,T)$ the group $\mathcal{M}(T)$ fits into an exact sequence
\begin{equation}\label{eqn:coinvseq1}
1 \to K \to \mathcal{M}(T) \to \Im \pi_{T} \to 1
\end{equation}
where $K = \ker \pi_{T}$ can be identified with a subgroup of $\Aut(T) / \langle T \rangle$, and $\Im \pi_{T}$ is a subgroup of $\Aut(\mathcal{G}_{T},\mathcal{G}_{T}^{+})$. We remark that from a group-theoretic point of view, \eqref{eqn:coinvseq1} does not provide much restriction on how `large' $\mathcal{M}(T)$ can be. When the rank of the abelian group $\mathcal{G}_{T}$ is finite, the group $\Aut(\mathcal{G}_{T},\mathcal{G}_{T}^{+})$ embeds into $GL(k,\mathbb{R})$, where $k=\textnormal{rank} \mathcal{G}_{T}$. While the order-preserving condition does provide restrictions on $\Aut(\mathcal{G}_{T},\mathcal{G}_{T}^{+})$, even in the finite rank case the group $\Aut(\mathcal{G}_{T},\mathcal{G}_{T}^{+})$ need not be finitely-generated, nor amenable (see Examples 6.7 and 6.9 in \cite{BLR88}).
\begin{corollary}\label{cor:imagepsic}
If $[f] \in \mathcal{M}(T)$ satisfies $f^{*}([1_{C}]) = [1_{C}]$ for some clopen $C \subset X$, then $[f] \in \Im \Psi_{C}$.
\end{corollary}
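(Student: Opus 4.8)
The plan is to reduce the statement to Theorem~\ref{thm:coinvariantskernel} applied to the return system $(C,T_{C})$. Since $(X,T)$ is minimal, $C$ is a discrete cross section and $(C,T_{C})$ is again a minimal Cantor system, and the canonical flow equivalence $h \colon \Sigma_{T}X \to \Sigma_{T_{C}}C$ determined by $C$ induces an isomorphism $\Theta \colon \mathcal{M}(T) \to \mathcal{M}(T_{C})$ via $[g] \mapsto [h g h^{-1}]$ (conjugation by $h$ preserves $\homeo$ and carries $\homeo_{0}$ to $\homeo_{0}$). I would first record two compatibility facts. (i) The isomorphism $res_{C} \colon \mathcal{G}_{T} \xrightarrow{\cong} \mathcal{G}_{T_{C}}$ is precisely the map on coinvariants induced by $h$; together with functoriality of the representation $\pi_{T}$ under flow equivalence this gives, for every $[g] \in \mathcal{M}(T)$, the identity $res_{C} \circ g^{*} = \Theta([g])^{*} \circ res_{C}$ on $\mathcal{G}_{T}$. (ii) Under $\Theta$, the homomorphism $\Psi_{C} \colon \Aut(T_{C}) \to \mathcal{M}(T)$ corresponds to the canonical embedding $\Psi \colon \Aut(T_{C}) \to \mathcal{M}(T_{C})$, i.e.\ $\Theta \circ \Psi_{C} = \Psi$; this is just unwinding the definitions, since the self-flow equivalence $S_{\phi}$ attached to the flow code $(\phi,C,C)$ is, after conjugation by $h$, isotopic to the map $[x,t] \mapsto [\phi(x),t]$ on $\Sigma_{T_{C}}C$.

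Next I would compute the effect of $res_{C}$ on the class $[1_{C}] \in \mathcal{G}_{T}$. Directly from the definition of $res_{C}$, $(1_{C})_{C}(x) = \sum_{i=0}^{r_{C}(x)-1} 1_{C}(T^{i}x) = 1$ for every $x \in C$, because $r_{C}$ is the first-return time; hence $res_{C}([1_{C}]) = [1]$, the class of the constant function $1$ in $\mathcal{G}_{T_{C}}$, which is the distinguished order unit of $\mathcal{G}_{T_{C}}$.

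Now suppose $f^{*}([1_{C}]) = [1_{C}]$. Applying $res_{C}$ and using (i) I get, in $\mathcal{G}_{T_{C}}$, that $\Theta([f])^{*}([1]) = \Theta([f])^{*}(res_{C}([1_{C}])) = res_{C}(f^{*}([1_{C}])) = res_{C}([1_{C}]) = [1]$. Thus the class $g := \Theta([f]) \in \mathcal{M}(T_{C})$ fixes the order unit of $\mathcal{G}_{T_{C}}$, so Theorem~\ref{thm:coinvariantskernel} applied to the minimal system $(C,T_{C})$ yields $[g] \in \Im \Psi$, say $g = \Psi(\phi)$ for some $\phi \in \Aut(T_{C})$. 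Finally, by (ii), $[f] = \Theta^{-1}(g) = \Theta^{-1}(\Psi(\phi)) = \Psi_{C}(\phi)$, so $[f] \in \Im \Psi_{C}$, as desired.

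The only substantive point is the verification of (i) and (ii), namely that the cross-section flow equivalence $h$ intertwines the coinvariant representations through $res_{C}$ and carries $\Psi_{C}$ to $\Psi$. I expect this to be the main (though routine) obstacle, the care being in tracking roof functions and arguing up to isotopy rather than on the nose; both statements are already implicit in the discussion around \eqref{eqn:moritaiso} and in the proof of Theorem~1.5 of \cite{BHOrdered}, which identifies $res_{C}$ with the flow-equivalence-induced map on $\mathcal{G}_{T}$.
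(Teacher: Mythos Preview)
Your proposal is correct and follows essentially the same route as the paper's proof: conjugate by the canonical flow equivalence $h \colon \Sigma_{T}X \to \Sigma_{T_{C}}C$, observe $res_{C}([1_{C}]) = [1]$, apply Theorem~\ref{thm:coinvariantskernel} to $(C,T_{C})$, and pull back. The paper's argument is terser and leaves your compatibility facts (i) and (ii) implicit, whereas you spell them out and correctly identify them as the only points needing care.
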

\begin{proof}
Let $h \colon \Sigma_{T}X \to \Sigma_{T_{C}}C$ be a flow equivalence, giving an induced isomorphism $h_{*} \colon \mathcal{M}(T) \to \mathcal{M}(T_{C})$. Since $res_{C}([1_{C}]) = [1] \in G_{T_{C}}$, the hypotheses imply the induced action of $h_{*}([f])$ on $\mathcal{G}_{T_{C}}$ sends $[1]$ to $[1]$. By Theorem \ref{thm:coinvariantskernel}, this implies $h_{*}([f])$ is isotopic to an automorphism of $(C,T_{C})$, and hence $[f] \in \Psi_{C}$.
\end{proof}

In what follows, by a ``box'' in $\Sigma_{T}X$ we mean a set of the form \begin{equation*}
\{C+t \mid t_{0} - \epsilon \le t \le t_{0} + \epsilon\}
\end{equation*}
for some $t_{0} \in \mathbb{R}$, $\epsilon > 0$, and $C \subset X$ clopen.
\begin{theorem}\label{thm:finiteorderfrends}
Let $(X,T)$ be a minimal Cantor system. If $[f] \in \mathcal{M}(T)$ is finite order then $[f] \in \Im \Psi_{C}$ for some clopen set $C \subset X$.
\end{theorem}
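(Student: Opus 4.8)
The plan is to reduce the statement, via Corollary~\ref{cor:imagepsic}, to finding a single nonempty clopen set $C\subset X$ with $f^{*}([1_{C}])=[1_{C}]$ in $\mathcal{G}_{T}$. Since $[f]$ has finite order, $f^{*}$ is an order automorphism of $(\mathcal{G}_{T},\mathcal{G}_{T}^{+})$ with $(f^{*})^{n}=\id$ for some $n\ge 1$. The natural fixed element to aim for is an average $\sum_{j=0}^{n-1}(f^{*})^{j}[1_{C'}]$ over the $f^{*}$-orbit of some clopen class $[1_{C'}]$, which is automatically fixed by $f^{*}$ (the sum telescopes modulo $n$), positive, and nonzero. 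The constraint is that a clopen class always satisfies $0\le[1_{C}]\le[1]$, so I must arrange this average to lie below $[1]$ — and it is here that the failure of $f^{*}$ to fix the order unit (Remark~\ref{remark:softlyintime}) has to be controlled quantitatively.

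The quantitative input is as follows. For each $0\le j<n$, the class $(f^{*})^{j}[1]$ is the image of the order unit under an order automorphism, hence is itself an order unit, so there is an integer $N_{j}$ with $(f^{*})^{j}[1]\le N_{j}[1]$, and therefore $0<\tau((f^{*})^{j}[1])\le N_{j}$ for every state $\tau$. Set $M=\max_{0\le j<n}N_{j}$. Using minimality of $(X,T)$ — for instance taking $C'$ to be the union of the bases of a Kakutani--Rokhlin partition all of whose towers have height larger than $nM$ — choose a nonempty clopen $C'\subset X$ such that $\nu(C')<1/(nM)$ for every $T$-invariant Borel probability measure $\nu$. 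Put $v=\sum_{j=0}^{n-1}(f^{*})^{j}[1_{C'}]$, so $f^{*}v=v$ and $v\ge[1_{C'}]>0$. For any state $\tau$ and any $0\le j<n$, the positive functional $\tau\circ(f^{*})^{j}$ satisfies $(\tau\circ(f^{*})^{j})([1])=\tau((f^{*})^{j}[1])=:c_{j}(\tau)\in(0,M]$, so $c_{j}(\tau)^{-1}\,\tau\circ(f^{*})^{j}$ is a state, corresponding to some invariant measure $\nu_{j}$, and hence
\[
\tau\big((f^{*})^{j}[1_{C'}]\big)=c_{j}(\tau)\,\nu_{j}(C')<M\cdot\frac{1}{nM}=\frac{1}{n}.
\]
Summing over $j$ gives $\tau(v)<1=\tau([1])$, i.e.\ $\tau([1]-v)>0$, for every state $\tau$.

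To finish I would invoke the structure of $\mathcal{G}_{T}$. Since $(X,T)$ is minimal, $(\mathcal{G}_{T},\mathcal{G}_{T}^{+},[1])$ is a simple dimension group, and in a simple dimension group any element that is strictly positive at every state belongs to the positive cone; applied to $[1]-v$ this gives $v\le[1]$, so $0<v\le[1]$. By the Glasner--Weiss realization theorem for minimal Cantor systems (every $w\in\mathcal{G}_{T}$ with $0\le w\le[1]$ equals $[1_{C}]$ for some clopen $C\subset X$; see \cite{GPS1}), there is a nonempty clopen $C\subset X$ with $[1_{C}]=v$. Then $f^{*}([1_{C}])=f^{*}v=v=[1_{C}]$, and Corollary~\ref{cor:imagepsic} yields $[f]\in\Im\Psi_{C}$, as desired.

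The main obstacle is the one flagged above: the naive average $\sum_{j}(f^{*})^{j}[1]$ is an order unit that is $\ge[1]$ rather than $\le[1]$, so it is never a clopen class, and one cannot remedy this by passing to a return subsystem of $(X,T)$, since the isomorphisms $res_{C}$ are order isomorphisms that merely move the order unit around. The fix is to average a sufficiently small clopen class and to quantify ``sufficiently small'' using the uniform bound $M$ on the stretching of $[1]$ by $f^{*}$ and its iterates, together with the availability of clopen sets of uniformly small invariant measure in a minimal Cantor system; everything else is routine bookkeeping with states and the cited realization results.
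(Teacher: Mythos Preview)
Your proof is correct and takes a genuinely different route from the paper's. Both arguments reduce to Corollary~\ref{cor:imagepsic} by producing a nonempty clopen $C$ with $f^{*}([1_{C}])=[1_{C}]$, but they manufacture this fixed class in opposite ways. The paper works geometrically in the suspension: it finds a leaf with $n$ distinct $f$-iterates, isotopes $f$ to a map $g$ carrying chosen cross-section points $y_{i}$ to $y_{i+1}$, and then builds pairwise disjoint clopens $F_{0},\dots,F_{n-1}\subset X$ with $g^{*}([1_{F_{i}}])=[1_{F_{i+1}}]$, so that $F=\bigsqcup_{i}F_{i}$ already is a clopen whose class is fixed. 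Your argument stays entirely inside the ordered group $(\mathcal{G}_{T},\mathcal{G}_{T}^{+})$: you average $[1_{C'}]$ over the $f^{*}$-orbit, use a uniform bound on the stretching of $[1]$ together with a uniformly small clopen (via Kakutani--Rokhlin) to force $\tau(v)<1$ at every state, invoke the simple-dimension-group fact that strict positivity at all states implies membership in the positive cone to get $0<v\le[1]$, and then realize $v$ as a clopen class. The paper's approach is more self-contained---it does not need the state characterization of positivity or the realization of order-interval elements by clopens---but pays for this with a somewhat delicate box/isotopy construction and a step (equation~\eqref{eqn:swiftsquirrelo}) that is only sketched. Your approach is cleaner and more conceptual, trading the geometry for two standard external facts about simple dimension groups of minimal Cantor systems; it also makes transparent why the naive choice $\sum_{j}(f^{*})^{j}[1]$ fails and exactly what quantitative control is needed to repair it.
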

\begin{proof}
Let $[f]$ be order $n > 1$. It follows from Proposition \ref{prop:leafaction} that for some leaf $\ell$, for $0 \le i \le n-1$ the leaves $f^{i}(\ell)$ are distinct. Choose $z_{0} \in l \cap \Gamma$, and for $0 \le i \le n-1$ let $z_{i} = f^{i}(z_{0})$. Define $t_{i} = \min \{ t \ge 0 \mid z_{i} + t_{i} \in \Gamma$\}, and let $y_{i} = z_{i} + t_{i}$. Note that since the leaves $f^{i}(\ell)$ are all distinct, the arcs $L_{i} = \{z_{i} + t \mid 0 \le t \le t_{i}\}$ are distinct (some of these arcs may be points, since it could be that $z_{i} \in \Gamma$). We claim there exist boxes $B_{i}$, for $1 \le i \le n-1$, which satisfy the following:
\begin{enumerate}
\item
for each $i$, $B_{i}$ contains $L_{i}$, and hence also $z_{i}$, in its interior
\item
$B_{i} \cap B_{j} = \emptyset$ if $ i \ne j$
\item
$B_{i} \cap \Gamma$ is a clopen containing $y_{i}$.
\end{enumerate}
Given these disjoint $B_{i}$, we may define $\eta \colon \Sigma_{T}X \to \mathbb{R}$ such that $\textnormal{supp}\eta \subset \cup_{i=1}^{n-1}B_{i}$ and $\eta(z_{i}) = t_{i}$. Define $S_{\eta} \in \homeo \Sigma_{T}X$ by $S_{\eta}(z) = z + \eta(z)$, so $S_{\eta}(z_{i}) = y_{i}$ for $0 \le i \le n-1$. Let $g = S_{\eta}f S_{\eta}^{-1}$ and note that, since $S_{\eta}$ is isotopic to the identity, $g$ is isotopic to $f$. Furthermore, $g$ satisfies $g(y_{i}) = y_{i+1}$ for $0 \le i \le n-2$. We now construct a finite sequence of small boxes $D_{i}$, for $0 \le i \le n-1$, which each satisfy the following properties:
\begin{enumerate}
\item
$D_{i} \cap \Gamma = E_{i}$ is clopen
\item
$y_{i} \in E_{i}$
\item
$g(D_{i}) \cap D_{i} = \emptyset$
\item
$D_{i+1} \subset g(D_{i})$ for $0 \le i \le n-2$.
\end{enumerate}
This can be done inductively: to begin, we can choose a box $D_{0}$ such that $D_{0} \cap \Gamma = E_{0}$ is clopen, $E_{0}$ contains $y_{0}$, and $g(D_{0}) \cap D_{0} = \emptyset$. Now given $D_{i}$, choose a box $D_{i+1}$ satisfying the first three properties such that $D_{i+1} \subset g(D_{i})$, using continuity, and the fact that the $y_{i}$'s are all distinct.\\
\indent Now let $D_{F} = (g^{-1})^{n-1}(D_{n-1}) = g^{1-n}(D_{n-1}) \subset D_{0}$. Then the sets $g^{i}(D_{F})$ are disjoint for all $0 \le i \le n-1$, and hence so are the clopen sets $F_{i} = g^{i}(D_{F}) \cap \Gamma$.\\
\indent We claim
\begin{equation}\label{eqn:swiftsquirrelo}
g_{*}([1_{F_{i}}]) = [1_{F_{i+1}}], \hspace{.11in} 0 \le i \le n-2.
\end{equation}
This can be seen either by examining the proof of Proposition \ref{prop:exponentialiso} in \cite[Section 4]{BHOrdered}, or by adjusting $g$ by a small isotopy so that $g$ actually takes $F_{i}$ to $F_{i+1}$.\\
\indent Now consider the element of $\mathcal{G}_{T}$
\begin{equation}
\omega = \sum_{i=0}^{n-1}(g^{i})^{*}([1_{F_{0}}]).
\end{equation}
Since $[f]$ is order $n$, $[g]$ is order $n$, so the order of $g^{*}$ divides $n$, and $g^{*}(\omega) = \omega$. On the other hand, by \eqref{eqn:swiftsquirrelo}, and using the fact that the $F_{i}$'s are disjoint, we have
\begin{equation*}
\omega = \sum_{i=0}^{n-1}[1_{F_{i}}] = [1_{F}], \hspace{.17in} F = \bigcup_{i=1}^{n-1}F_{i}.
\end{equation*}
Thus $g^{*}$, and hence $f^{*}$, fixes $[1_{F}]$, so by Corollary \ref{cor:imagepsic}, $[f] \in \Im \Psi_{F}$.
\end{proof}

\subsection{States}\label{section:states}
We call a homomorphism $\tau \colon \mathcal{G}_{T} \to \mathbb{R}$ a \emph{positive homomorphism} if it satisfies $\tau(\mathcal{G}_{T}^{+}) \subset \mathbb{R}_{+}$. A positive homomorphism $\tau$ is called a \emph{state} if $\tau([1]) = 1$. Let $p(T)$ denote the monoid of all positive homomorphisms $\tau \colon \mathcal{G}_{T} \to \mathbb{R}$, and let $\mathcal{S}(T) \subset p(T)$ denote the set of states. Any $T$-invariant Borel probability measure $\mu$ on $X$ gives rise to a state $\tau_{\mu}$ given by $\tau_{\mu}([\gamma]) = \int_{X}\gamma d \mu$, and there is a bijection between $\mathcal{S}(T)$ and the set $m(T)$ of all $T$-invariant Borel probability measures on $X$ given by
\begin{equation}\label{eqn:measurestostates}
\begin{gathered}
m(T) \to \mathcal{S}(T)\\
\mu \longmapsto \tau_{\mu}.
\end{gathered}
\end{equation}
(see \cite[Section 1.6]{BHOrdered} for details regarding this bijection). Under the correspondence in \eqref{eqn:measurestostates} an ergodic measure $\mu$ corresponds to an extremal state $\tau_{\mu}$. If $(X,T)$ has exactly $k < \infty$ $T$-invariant ergodic probability measures $\{\mu_{i}\}_{i=1}^{k}$, then $p(T)$ may be identified with the positive orthant in $\mathbb{R}^{k}$ given by $\{v = \sum_{i=1}^{k}c_{i}\tau_{\mu_{i}} \mid c_{i} \in \mathbb{R}_{+}\}$.\\
\indent The subgroup of infinitesimals of $\mathcal{G}_{T}$ is defined to be
\begin{equation}\label{eqn:infdef}
\Inf(\mathcal{G}_{T}) = \{[\gamma] \in \mathcal{G}_{T} \mid \tau([\gamma]) = 0 \textnormal{ for all } \tau \in \mathcal{S}(T)\}.
\end{equation}
Equivalently, we have
\begin{equation*}
\Inf(\mathcal{G}_{T}) = \{[\gamma] \in \mathcal{G}_{T} \mid n[\gamma] \le [1] \textnormal{ for all } n \in \mathbb{Z}\}.
\end{equation*}
\indent An automorphism $\alpha \in \Aut(\mathcal{G}_{T},\mathcal{G}_{T}^{+})$ induces a dual map $p(\alpha) \colon p(T) \to p(T)$ given by $p(\alpha)(\tau)([\gamma]) = \tau(\alpha([\gamma]))$, so there is an action of $\mathcal{M}(T)$ on the space of positive homomorphisms, giving a representation
\begin{equation}\label{eqn:staterep}
L_{T} \colon \mathcal{M}(T) \to \Aut(p(T)).
\end{equation}
For $[f] \in \mathcal{M}(T)$, we will denote $L_{T}([f])$ by $f_{*}$.
\subsection{Uniquely ergodic systems}
Recall a system $(X,T)$ is called uniquely ergodic if it has only one $T$-invariant probability measure $\mu$. In this case there is a unique state $\tau_{\mu}$ and $p(T)$ corresponds to a ray. It follows that for any $[f] \in \mathcal{M}(T)$ there exists $\lambda_{f} \in \mathbb{R}_{>0}^{*}$ such that $f_{*}\tau_{\mu} = \lambda_{f}\tau_{\mu}$
and we may identify the map $L_{T}$ in \eqref{eqn:staterep} with
\begin{equation}\label{eqn:Rmu}
\begin{gathered}
\mathcal{M}(T) \xrightarrow{R_{\mu}}\mathbb{R}_{>0}^{*}  \\
[f] \longmapsto \tau_{\mu}(f^{*}([1]))
\end{gathered}
\end{equation}
where $R_{>0}^{*}$ denotes the group of positive reals under multiplication.
\begin{lemma}\label{lemma:ratiomeasures}
Suppose $(X,T)$ is a minimal Cantor system with a unique invariant probability measure $\mu$. If $[f] \in \mathcal{M}(T)$ and $(\phi,C,D)$ is a flow code representing $[f]$, then $R_{\mu}([f]) = \mu(C)/\mu(D)$.
\end{lemma}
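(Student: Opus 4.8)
The plan is to unwind the definition $R_{\mu}([f]) = \tau_{\mu}(f^{*}([1]))$ from \eqref{eqn:Rmu} by pushing the unique state $\tau_{\mu}$ through the factorization $f^{*} = res_{C}^{-1}\circ S_{\phi}^{*}\circ res_{D}$ of \eqref{eqn:moritaiso}, tracking how a state transforms under each of the three maps $res_{D}$, $S_{\phi}^{*}$, $res_{C}^{-1}$.

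First I would record how $\mu$ behaves under the restriction isomorphism. For a clopen $C \subset X$ put $\mu_{C} = \mu|_{C}/\mu(C)$; since $(X,T)$ is uniquely ergodic and this property passes to the minimal return system $(C,T_{C})$ (any $T_{C}$-invariant probability measure induces, via the Rokhlin tower over $C$ with the bounded height function $r_{C}$, a $T$-invariant probability measure on $X$, necessarily $\mu$), $\mu_{C}$ is the unique $T_{C}$-invariant probability measure, so $\tau_{\mu_{C}}$ is the unique state on $\mathcal{G}_{T_{C}}$. That same tower decomposition of $X$ over $C$ gives, for every $\gamma \in C(X,\mathbb{Z})$, the Kac identity $\int_{X}\gamma\,d\mu = \int_{C}\gamma_{C}\,d\mu$, where $\gamma_{C}$ is exactly the function used to define $res_{C}$. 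Dividing by $\mu(C)$, this says $\tau_{\mu_{C}}\circ res_{C} = \mu(C)^{-1}\tau_{\mu}$ as functionals on $\mathcal{G}_{T}$; in particular, taking $\gamma = 1$ (so $\gamma_{C} = r_{C}$) gives $\tau_{\mu_{C}}(res_{C}([1])) = \tau_{\mu_{C}}([r_{C}]) = 1/\mu(C)$, and likewise $\tau_{\mu_{D}}(res_{D}([1])) = 1/\mu(D)$.

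Next I would use that $\phi \colon (C,T_{C}) \to (D,T_{D})$ is a conjugacy, so $\phi_{*}\mu_{C} = \mu_{D}$ by uniqueness of the invariant measure; since $S_{\phi}^{*}$ is the map $[g]\mapsto [g\circ\phi]$, a change of variables gives $\tau_{\mu_{C}}\circ S_{\phi}^{*} = \tau_{\mu_{D}}$ on $\mathcal{G}_{T_{D}}$. Chaining these, and writing $\tau_{\mu} = \mu(C)\cdot\tau_{\mu_{C}}\circ res_{C}$,
\[
R_{\mu}([f]) = \tau_{\mu}\big(res_{C}^{-1}(S_{\phi}^{*}(res_{D}[1]))\big) = \mu(C)\,\tau_{\mu_{C}}\big(S_{\phi}^{*}(res_{D}[1])\big) = \mu(C)\,\tau_{\mu_{D}}(res_{D}[1]) = \frac{\mu(C)}{\mu(D)}.
\]
The only substantive point is the identity $\tau_{\mu_{C}}\circ res_{C} = \mu(C)^{-1}\tau_{\mu}$, i.e.\ that $res_{C}$ intertwines the unique states up to the scalar $\mu(C)$, which is just Kac's formula; so I anticipate no genuine obstacle, the argument being bookkeeping with the tower decomposition and the definitions of $res_{C}$, $S_{\phi}^{*}$, and $\tau_{\mu}$. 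One could alternatively argue geometrically, using that the flow-invariant measure on $\Sigma_{T}X$ built from $\mu$ has total mass $1$ and that $S_{\phi}$ rescales it leafwise according to the cocycle slopes in \eqref{eqn:linearcocycles}, but the algebraic route above is cleaner.
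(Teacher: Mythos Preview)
Your argument is correct. It differs from the paper's proof, which is a one-line computation: since $\phi(C)=D$, one has $f^{*}([1_{D}])=[1_{C}]$ (this is immediate from the same factorization \eqref{eqn:moritaiso} you use, applied to the indicator $1_{D}$ rather than to $1$), and then unique ergodicity gives $\tau_{\mu}\circ f^{*}=\lambda_{f}\tau_{\mu}$, so $\mu(C)=\tau_{\mu}([1_{C}])=\tau_{\mu}(f^{*}[1_{D}])=\lambda_{f}\,\tau_{\mu}([1_{D}])=\lambda_{f}\,\mu(D)$. In other words, the paper evaluates the eigenvalue relation at $[1_{D}]$, whereas you evaluate $f^{*}$ at $[1]$ and track the state through each leg of \eqref{eqn:moritaiso} via Kac's formula. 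Your route is longer but makes explicit the general fact $\tau_{\mu_{C}}\circ res_{C}=\mu(C)^{-1}\tau_{\mu}$, which is of independent interest; the paper's route avoids introducing the induced measures $\mu_{C},\mu_{D}$ altogether by exploiting that $f^{*}$ sends the specific class $[1_{D}]$ to $[1_{C}]$.
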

\begin{proof}
Since $\phi(C) = D$, we have
\begin{equation*}
\mu(C) = \tau_{\mu}(1_{C}) = \tau_{\mu}(f^{*}(1_{D})) = \lambda_{f}\tau_{\mu}(1_{D}) = \lambda_{f}\mu(D).
\end{equation*}
\end{proof}

\indent Following \cite{DOPselfinduced}, we say a minimal Cantor system $(X,T)$ is \emph{self-induced} if there exists a non-empty proper clopen set $C \subset X$ such that $(X,T)$ and $(C,T_{C})$ are conjugate. The following result of \cite{DOPselfinduced}
classifies expansive self-induced minimal systems.
\begin{theorem}\cite[Theorem 14]{DOPselfinduced}\label{thm:selfinduced}
The system $(X, T)$ is a self-induced expansive minimal Cantor system if and only if $(X,T)$ is conjugate to a substitution subshift.
\end{theorem}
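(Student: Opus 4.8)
The plan is to prove both implications, with the forward direction (self-induced $\Rightarrow$ substitutive) being the substantial one. For the easy direction, suppose $(X,T)$ is conjugate to $(X_{\xi},\sigma_{\xi})$ for a primitive, aperiodic substitution $\xi$; since the four properties in the statement are conjugacy invariants it suffices to treat $(X_{\xi},\sigma_{\xi})$ itself. Primitivity gives minimality, and aperiodicity makes $X_{\xi}$ an infinite subshift, hence an expansive system on a Cantor set, so all that remains is to produce a proper clopen $C$ with $(C,(\sigma_{\xi})_{C})$ conjugate to $(X_{\xi},\sigma_{\xi})$. Here I would invoke Moss\'e's recognizability theorem: for a primitive aperiodic substitution the map $\xi \colon X_{\xi} \to X_{\xi}$ (defined on bi-infinite sequences by concatenating the blocks $\xi(x_{n})$ and re-centering) is a homeomorphism onto a clopen set $C := \xi(X_{\xi})$, and the first return map of $\sigma_{\xi}$ to $C$ satisfies $(\sigma_{\xi})_{C}(\xi(y)) = \sigma_{\xi}^{|\xi(y_{0})|}\xi(y) = \xi(\sigma_{\xi}y)$; thus $\xi$ is the required conjugacy and $(X_{\xi},\sigma_{\xi})$ is self-induced.

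For the forward direction, suppose $(X,T)$ is an expansive minimal Cantor system with a proper nonempty clopen $C$ and a conjugacy $h \colon (X,T) \to (C,T_{C})$. By expansiveness I may replace $(X,T)$ by a conjugate subshift $(X,\sigma)$ over a finite alphabet $\mathcal{A}$, and after passing to a higher block presentation I may assume $C$ is a union of length-one cylinders. The set $R$ of \emph{return words} of $C$ (the words $x_{[0,r_{C}(x))}$ for $x \in C$) is finite, since $r_{C}$ is continuous hence bounded by minimality and compactness; coding each point of $C$ by its bi-infinite sequence of return words gives a conjugacy $\mathbf{D}\colon (C,T_{C}) \to (X_{R},\sigma)$ onto the associated derived subshift over the alphabet $R$, with inverse the concatenation map. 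Composing, $\Phi := \mathbf{D}\circ h$ identifies $(X,\sigma)$ with a subshift built by concatenating its own return words — in other words $(X,\sigma)$ desubstitutes to itself. Iterating $h$ yields a nested sequence of clopen sets $X = C_{0} \supset C_{1} = C \supset C_{2} \supset \cdots$ with each $(C_{n},\sigma_{C_{n}})$ conjugate to $(X,\sigma)$ via $h^{n}$, and the sequence of Kakutani--Rokhlin partitions over the bases $C_{n}$ generates the topology while having the \emph{same} incidence data between consecutive levels, because $h$ carries the $C_{n}$-structure of $X$ onto the $C_{n+1}$-structure inside $C_{1}$. By the Herman--Putnam--Skau machinery this is a stationary Bratteli--Vershik model for $(X,\sigma)$.

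To finish I would invoke the Durand--Host--Skau correspondence between substitution subshifts and stationary, properly ordered Bratteli--Vershik systems: a minimal Cantor system carrying a stationary Bratteli--Vershik model is, as soon as it is expansive, conjugate to a substitution subshift (the non-expansive stationary models being essentially odometers), minimality forces the incidence matrix to be primitive so the substitution is primitive, and infiniteness of $X$ forces it to be aperiodic. Equivalently, one can bypass Bratteli diagrams and argue directly from the previous paragraph: along the nested sequence $(C_{n})$ the derived subshifts of $(X,\sigma)$ are all mutually conjugate, so $(X,\sigma)$ has only finitely many derived subshifts up to relabeling, and Durand's return-words characterization of primitive substitutive subshifts applies.

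I expect the main obstacle to be this last step: converting the self-similar Rokhlin/Bratteli data into an honest primitive aperiodic substitution $\xi$ \emph{for which $X_{\xi}$ is genuinely conjugate (not merely orbit- or flow-equivalent) to $X$}. The delicate points are (i) a stationary ordered Bratteli diagram may need to be telescoped so that it becomes proper (unique maximal and minimal path), which is exactly what makes the Vershik map a subshift rather than something only semiconjugate to one, and (ii) one must arrange the substitution to be one-sided recognizable, so that running the recognizability argument from the easy direction in reverse reconstructs $(X,\sigma)$ up to conjugacy. Pinning down that expansiveness of $(X,\sigma)$ is precisely the hypothesis excluding the degenerate (odometer-type) stationary models is the crux of the argument.
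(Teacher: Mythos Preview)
The paper does not contain a proof of this theorem; it is quoted verbatim from \cite[Theorem 14]{DOPselfinduced} and used as a black box (in Proposition~\ref{prop:selfinduced}, Corollary~\ref{cor:smalldeermouse}, and in the proof of Theorem~\ref{thm:substhm1}). There is therefore no ``paper's own proof'' to compare against.

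That said, your outline is essentially the strategy of the cited reference. The easy direction via Moss\'e recognizability is standard and correct. For the hard direction, the route through stationary Bratteli--Vershik models and the Durand--Host--Skau correspondence is indeed how \cite{DOPselfinduced} proceeds, and you have correctly isolated the genuine technical issue: ensuring that the stationary diagram can be made \emph{properly} ordered so that the Vershik map is defined everywhere, and that expansiveness rules out the odometer case. Your alternative suggestion (finitely many derived subshifts up to isomorphism, then Durand's return-word characterization) is also viable and arguably cleaner, though one must be careful that Durand's theorem yields a primitive \emph{substitutive} sequence, and an extra step is needed to upgrade ``substitutive'' to ``conjugate to a substitution subshift''---this is again where expansiveness and the stationary Bratteli machinery enter.

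One small gap in the easy direction: the image $\xi(X_{\xi})$ is indeed clopen by recognizability, but you should also note that it is \emph{proper}, i.e.\ $\xi(X_{\xi}) \ne X_{\xi}$; this holds because $|\xi(a)| \ge 2$ for some letter $a$ (otherwise $\xi$ would be a permutation and $X_{\xi}$ finite), so $\sigma_{\xi}\xi(X_{\xi})$ meets the complement of $\xi(X_{\xi})$.
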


\begin{proposition}\label{prop:selfinduced}
Suppose $(X,T)$ is a minimal uniquely ergodic Cantor system. If there exists $f \in \mathcal{M}(T)$ such that $R_{\mu}([f]) \ne 1$, then $(X,T)$ is flow equivalent to a self-induced system.
\end{proposition}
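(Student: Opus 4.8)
\emph{Setup and reduction.} By Lemma \ref{lemma:ratiomeasures}, choose a flow code $(\phi,C,D)$ representing $f$, so that $\mu(C)/\mu(D)=R_{\mu}([f])\neq 1$. Replacing $f$ by $f^{-1}$ if necessary (which replaces the flow code by $(\phi^{-1},D,C)$), we may assume $\mu(C)<\mu(D)$; thus $\phi$ is a conjugacy $(C,T_{C})\xrightarrow{\ \cong\ }(D,T_{D})$. The plan is to exhibit a proper, non-empty clopen subset of $D$ whose return system is conjugate to $(D,T_{D})$: since $(D,T_{D})$ is flow equivalent to $(X,T)$, this makes $(X,T)$ flow equivalent to a self-induced system. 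Write $\Gamma_{C}=C\times\{0\}$ and $\Gamma_{D}=D\times\{0\}$, clopen subsets of $\Gamma$; by construction $S_{\phi}$ carries $\Gamma_{C}$ onto $\Gamma_{D}$, and being a flow equivalence it conjugates the first-return map of the flow to $\Gamma_{C}$, namely $(C,T_{C})$, onto the first-return map of the flow to $\Gamma_{D}$, namely $(D,T_{D})$.

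\emph{Strategy.} I would first produce a map $g\in\homeo_{0}\SuspT$ with $g(\Gamma_{C})$ a clopen subset of $\Gamma_{D}$. Granting this, set $W:=g(\Gamma_{C})\subseteq\Gamma_{D}$ and $f':=g\circ S_{\phi}^{-1}$. Because $g$ is isotopic to the identity, $f'$ represents $[f]^{-1}$, and it is a flow equivalence carrying the cross section $\Gamma_{D}$ onto $W$; hence $f'$ conjugates $(D,T_{D})$ onto the return system of $W$, which — since $W\subseteq\Gamma_{D}$ — is $(W,T_{W})$ (here $(T_{D})_{W}=T_{W}$ as $W\subseteq D$). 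As $\mu(W)=\mu(C)<\mu(D)$ the set $W$ is a proper, non-empty clopen subset of $D$, so $W$ witnesses that $(D,T_{D})$ is self-induced, completing the proof. Note this reduction is formal; everything rests on constructing $g$.

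\emph{Constructing $g$, and the main obstacle.} Recall $g\in\homeo_{0}\SuspT$ exactly when $g(z)=z+\eta(z)$ for a continuous $\eta\colon\SuspT\to\mathbb{R}$ with $t\mapsto t+\eta(z+t)$ strictly increasing on each leaf. It therefore suffices to find a clopen partition $C=\bigsqcup_{j}C_{j}$ and integers $k_{j}$ with (i) $T^{k_{j}}(C_{j})\subseteq D$, (ii) the sets $T^{k_{j}}(C_{j})$ pairwise disjoint, and (iii) $0\le k_{j}<r_{C}|_{C_{j}}$: indeed, setting $\eta|_{C_{j}}=k_{j}$ and interpolating $\eta$ linearly along leaves between consecutive points of $\Gamma_{C}$, condition (iii) forces each interpolating segment to have positive slope (since $k_{i}-k_{i'}<r_{C}$ when $0\le k_{i},k_{i'}<r_{C}$), while (i)–(ii) give $g(\Gamma_{C})=\bigsqcup_{j}T^{k_{j}}(C_{j})\times\{0\}$, a clopen subset of $\Gamma_{D}$. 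For (i)–(ii): by minimality the return time $r_{D}$ is bounded, say by $M$, so the first-entrance function $e(x)=\min\{m\ge 0:T^{m}x\in D\}$ is a bounded, locally constant map $C\to\{0,\dots,M\}$ with $T^{e(x)}x\in D$; refining the partition so that $e\equiv k_{j}$ on $C_{j}$, the images $T^{k_{j}}(C_{j})$ lie in distinct columns of the Kakutani–Rokhlin tower over $C$ and hence are automatically disjoint. The delicate point is (iii), i.e.\ making every column of the tower over $C$ taller than $M$; this can be arranged by first restricting the flow code $(\phi,C,D)$ to a sufficiently small sub‑cross‑section (a move preserving the isotopy class), but one must then re-run the entrance-time argument against the correspondingly shrunken target and keep the two scales compatible. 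I expect this reconciliation — ensuring one can genuinely land inside the target with the required monotonicity — to be the hard part, and it is precisely where the inequality $\mu(C)<\mu(D)$ must be used, essentially as the statement that a clopen set injects into any clopen set of no smaller measure by an orbit-monotone map realized by an isotopy of the suspension (a Glasner–Weiss / tower argument).
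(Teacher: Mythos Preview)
Your reduction is sound and lands on exactly the same key step as the paper: one must produce a proper clopen subset of the larger cross section whose return system is conjugate to the whole. The paper does not construct this; with the convention $\mu(C)>\mu(D)$ it simply cites the proof of \cite[Proposition~7]{DOPselfinduced}, which under unique ergodicity supplies a clopen $C'\subset C$ and a conjugacy $\tilde\phi\colon(D,T_D)\to(C',T_{C'})$, and then $\tilde\phi\circ\phi$ exhibits $(C,T_C)$ as self-induced. Your construction of $g$ is an attempt to re-prove precisely that proposition by hand, and the gap you yourself flag is real, not cosmetic. The first-entrance scheme requires every column of the tower over $C$ to meet $D$ before returning to $C$, which can certainly fail; your proposed fix of restricting to a sub-cross-section shrinks source and target in tandem via $\phi$, so the return-time bound on the new target grows in step with the column heights over the new source, and there is no evident mechanism forcing the two scales to separate. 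The argument that actually works is a Kakutani--Rokhlin construction specific to the uniquely ergodic case (this is indeed where $\mu(C)<\mu(D)$ enters, as you correctly diagnose), and it constitutes the substance of the cited result rather than a routine detail.

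As written, then, you have a correct reduction to a lemma you do not prove. Two incidental corrections: the disjointness in (ii) holds because the images sit at distinct \emph{levels} of the tower over $C$ (not distinct columns), and this already presupposes (iii); and for $z\mapsto z+\eta(z)$ to be a homeomorphism you need the leafwise slope of $\eta$ to exceed $-1$, not to be positive---your inequality $k_j-k_{j'}<r_C$ delivers exactly that.
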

\begin{proof}
If $f^{*}(1) = \lambda \ne 1$, we can assume (by passing to $f^{-1}$ instead, if necessary) that $\lambda > 1$. Choosing a flow code $(\phi,C,D)$ representing $f$, by Lemma \ref{lemma:ratiomeasures} we have $f^{*}(1) = \mu(C) / \mu(D) > 1$, so $\mu(C) > \mu(D)$. Since $(X,T)$ is uniquely ergodic, we claim there exists $C^{\prime} \subset C$ and conjugacy $\tilde{\phi} \colon (D,T_{D}) \to (C^{\prime},T_{C^{\prime}})$. Indeed, a proof of this can be found in the proof of Proposition 7 in \cite{DOPselfinduced}. The composition $\tilde{\phi}\phi$ gives the self-induction map $C \to C'$. Since $(X,T)$ is flow equivalent to $(C,T_{C})$, the result follows.
\end{proof}

\begin{corollary}\label{cor:smalldeermouse}
Let $(X,T)$ be a uniquely ergodic minimal subshift which is not flow equivalent to a substitution. If $\Inf \mathcal{G}_{T} = 0$, then $\mathcal{M}(T)$ is isomorphic to $\Aut(T) / \langle T \rangle$.
\end{corollary}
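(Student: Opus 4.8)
The plan is to show that the embedding $\Psi \colon \Aut(T)/\langle T \rangle \hookrightarrow \mathcal{M}(T)$ from Proposition \ref{prop:autembedding} is in fact surjective. So fix an arbitrary $[f] \in \mathcal{M}(T)$; by Theorem \ref{thm:coinvariantskernel} it is enough to prove that $f^{*}([1]) = [1]$ in $\mathcal{G}_{T}$. First I would reduce this to a statement about the scaling factor $R_{\mu}$. Since $(X,T)$ is uniquely ergodic with invariant measure $\mu$, the unique state is $\tau_{\mu}$, and by \eqref{eqn:Rmu} we have $\tau_{\mu}\big(f^{*}([1])\big) = R_{\mu}([f])$. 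If $R_{\mu}([f]) = 1$, then the element $f^{*}([1]) - [1]$ is annihilated by $\tau_{\mu}$, hence (being annihilated by the only state) lies in $\Inf \mathcal{G}_{T}$, which is $0$ by hypothesis; thus $f^{*}([1]) = [1]$ and $[f] \in \Im \Psi$. So the whole corollary comes down to showing $R_{\mu}([f]) = 1$ for every $[f] \in \mathcal{M}(T)$.

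Next I would argue $R_{\mu}([f]) = 1$ by contradiction, invoking Proposition \ref{prop:selfinduced}: if some $[f]$ has $R_{\mu}([f]) \ne 1$, then $(X,T)$ is flow equivalent to a self-induced system $(C,T_{C})$ (namely the return system of a clopen $C \subset X$ that properly contains a conjugate copy of itself). The point I would need to make carefully is that $(C,T_{C})$ is expansive: since $(X,T)$ is a minimal subshift and $C$ is clopen, the return system $(C,T_{C})$ is conjugate to a minimal subshift, hence expansive. Then Theorem \ref{thm:selfinduced} applies to $(C,T_{C})$, so $(C,T_{C})$ is conjugate to a substitution subshift. Since conjugacy implies flow equivalence and flow equivalence is transitive, $(X,T)$ is flow equivalent to a substitution subshift, contradicting the hypothesis. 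Hence $R_{\mu}([f]) = 1$ for all $[f]$, so by the reduction above $\mathcal{M}(T) = \Im \Psi$, and combined with injectivity of $\Psi$ (Proposition \ref{prop:autembedding}) we get $\mathcal{M}(T) \cong \Aut(T)/\langle T \rangle$.

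The main obstacle is the expansiveness bookkeeping in the contradiction step: Theorem \ref{thm:selfinduced} characterizes \emph{expansive} self-induced minimal systems, so one must be sure that the self-induced system produced by Proposition \ref{prop:selfinduced} is expansive, which here follows from the standard fact that a return system of a (minimal) subshift is again conjugate to a (minimal) subshift. Everything else — the reduction to $R_{\mu}$, the use of $\Inf \mathcal{G}_{T} = 0$, and the appeal to Theorem \ref{thm:coinvariantskernel} — is a direct chaining of the already-established results.
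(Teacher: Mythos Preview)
Your proposal is correct and follows essentially the same route as the paper's proof: use Proposition~\ref{prop:selfinduced} (together with Theorem~\ref{thm:selfinduced}) to force $R_{\mu}\equiv 1$, then use $\Inf\mathcal{G}_{T}=0$ to upgrade this to $f^{*}([1])=[1]$ and apply Theorem~\ref{thm:coinvariantskernel}. You are in fact more explicit than the paper about the expansiveness bookkeeping needed to pass from ``self-induced'' to ``substitution'', which the paper leaves implicit.
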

\begin{proof}
Since we are assuming $(X,T)$ is not flow equivalent to a substitution, Proposition \ref{prop:selfinduced} implies $\mathcal{M}(T) = \ker R_{\mu}$. The condition $\Inf \mathcal{G}_{T} = 0$ then implies $\mathcal{M}(T) = \ker \pi_{T}$, so by Theorem \ref{thm:coinvariantskernel} we have $\mathcal{M}(T) \subset \Im \Psi$.
\end{proof}
\begin{example}\label{example:denjoy}
Let $(X_{\beta},\sigma_{\beta})$ be a minimal Sturmian subshift associated to an irrational $0 < \beta < 1$ (see \cite[Ch 6.1]{FoggSubstitutions}, \cite{MorseHedlund1}). Then $\Inf \mathcal{G}_{\sigma_{\beta}} = 0$ (this can deduced from \cite[Theorem 5.3]{PSdenjoy}), and $\Aut(\sigma_{\beta}) \cong \mathbb{Z} = \langle \sigma_{\beta} \rangle$ (see \cite[Example 4.1]{Ollisturmian}). A classical result (see for example \cite[Theorem 1]{BEIsturmian}) is that $(X_{\beta},\sigma_{\beta})$ is conjugate to a substitution system if and only if $\beta$ is a \emph{Sturm number}: that is, $\beta$ is a quadratic irrational whose algebraic conjugate lies outside the interval $[0,1]$. Thus we have the following dichotomy:
\begin{enumerate}
\item
Suppose $\beta$ is not a Sturm number. Then Corollary \ref{cor:smalldeermouse} and the above discussion implies $\mathcal{M}(\sigma_{\beta})$ is trivial.
\item
Suppose $\beta$ is a Sturm number, so $(X_{\beta},\sigma_{\beta})$ is conjugate to a substitution subshift. Then by Theorem \ref{thm:substhm1}, $\mathcal{M}(\sigma_{\beta})$ is isomorphic to $\mathcal{F} \rtimes \mathbb{Z}$ where $\mathcal{F}$ is a finite group. However $\mathcal{F}$ must be trivial: by Theorem \ref{thm:finiteorderfrends}, an element of $\mathcal{F}$ is induced by an automorphism of a return system. But this return system can have at most one pair of asymptotic points, and hence has no finite order automorphisms (by \cite{DDMPlowcomp}). Thus in this case, we have $\mathcal{M}(\sigma_{\beta}) \cong \mathbb{Z}$.
\end{enumerate}
\end{example}
\begin{remark} In Example \ref{example:denjoy}, there are non-trivial elements of the mapping class group, namely those associated to substitutions, which fix a leaf. The existence of such elements demonstrates a key difference between $\mathcal{M}(T)$ and $\Aut(T)$, since any automorphism of a minimal Cantor system $(X,T)$ which is not a power of the shift must act freely on the $T$-orbits of $X$.
\end{remark}
\begin{remark}
Flow equivalent systems must have isomorphic mapping class groups. However systems with isomorphic mapping class groups need not be flow equivalent. For example, by Fokkink's Theorem (\cite{BWDenjoy}), for irrationals $\beta, \gamma$, the Sturmian systems $(X_{\beta},\sigma_{\beta}), (X_{\gamma},\sigma_{\gamma})$ are flow equivalent if and only if $\beta$ and $\gamma$ are in the same orbit of the action of $SL(2,\mathbb{Z})$ given by $\begin{pmatrix}a & b \\ c & d \end{pmatrix}\alpha = \frac{a + b\alpha}{c + d \alpha}$. It follows then from Example \ref{example:denjoy} that there are uncountably many Sturmian systems, each of whose mapping class group is trivial, which are pairwise not flow equivalent.
\end{remark}

\section{The mapping class group of a substitution system}
Throughout this section, $\xi$ will denote a primitive aperiodic substitution on a finite alphabet $\{1, \cdots, d\}$, and we denote by $(X_{\xi},\sigma_{\xi})$  the minimal subshift associated to $\xi$. We let $\lambda_{PF}$ denote the Perron-Frobenius eigenvalue for the incidence matrix $M_{\xi}$ associated to $\xi$, and $v^{(\ell)}_{PF} = (v_{1},v_{2},\ldots,v_{d})$ a left Perron-Frobenius eigenvector.\\
\indent The first main goal of this section is to prove the following.
\begin{theorem}\label{thm:substhm1}
Suppose $(X_{\xi},\sigma_{\xi})$ is a minimal subshift associated to a primitive substitution $\xi$. Then $\mathcal{M}(\sigma_{\xi})$ fits into an exact sequence
\begin{equation}\label{eqn:subexact1}
1 \to \mathcal{F} \to \mathcal{M}(\sigma_{\xi}) \xrightarrow{R_{\mu}} \mathbb{Z} \to 1
\end{equation}
where $\mathcal{F}$ is a finite group.
\end{theorem}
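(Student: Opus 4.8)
The plan is to analyze the map $R_\mu \colon \mathcal{M}(\sigma_\xi) \to \mathbb{R}_{>0}^*$ from \eqref{eqn:Rmu} (valid since a primitive substitution subshift is uniquely ergodic) and show that its image is infinite cyclic and its kernel is finite. For the image: by Lemma \ref{lemma:ratiomeasures}, $R_\mu([f]) = \mu(C)/\mu(D)$ for any flow code $(\phi,C,D)$ representing $[f]$. I would first argue that every such ratio lies in $\mathbb{Z}[1/\lambda_{PF}]^* \cap \mathbb{R}_{>0}$, or more precisely in the multiplicative group generated by $\lambda_{PF}$: the measures of clopen sets in a substitution subshift are computable via the Perron--Frobenius data, and more robustly, the self-induced structure from Theorem \ref{thm:selfinduced} gives a flow equivalence $(X_\xi,\sigma_\xi) \to (C_0, (\sigma_\xi)_{C_0})$ where $\mu(C_0) = 1/\lambda_{PF}$, producing an element $[f_0]$ with $R_\mu([f_0]) = \lambda_{PF}$; thus $\lambda_{PF}$ (equivalently $1/\lambda_{PF}$ after inverting) is in the image. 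One then shows that $\operatorname{Im} R_\mu$ is a discrete subgroup of $\mathbb{R}_{>0}^*$ — every element is an algebraic number all of whose values of $\mu$ on clopen sets are bounded away from $1$ in a controlled way (the set of possible values $\mu(C)/\mu(D)$ accumulates only at $0$ and $\infty$) — hence cyclic, and generated by a power of $\lambda_{PF}$ (or a root thereof). Since $\mathbb{Z} \cong \langle \lambda_{PF} \rangle \subseteq \operatorname{Im} R_\mu \subseteq \mathbb{Z}$-like discrete group, the image is $\cong \mathbb{Z}$.

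For the kernel: $\ker R_\mu = \{[f] : f^*[1] = [1]\} = \operatorname{Im}\Psi$ by Theorem \ref{thm:coinvariantskernel} (using unique ergodicity, so $R_\mu([f])=1 \iff f^*[1]=[1]$), i.e. $\mathcal{F} \cong \Aut(\sigma_\xi)/\langle \sigma_\xi\rangle$ via Proposition \ref{prop:autembedding}. But for a primitive aperiodic substitution subshift, $\Aut(\sigma_\xi)/\langle\sigma_\xi\rangle$ is a finite group: this is a known consequence of the low (linear) word complexity of substitution subshifts together with the bounded number of asymptotic pairs — every automorphism permutes the finitely many asymptotic pairs and is essentially determined by this permutation plus a bounded shift, cf. the results of \cite{DDMPlowcomp} cited in the introduction. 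Hence $\mathcal{F} = \ker R_\mu$ is finite, completing the exact sequence \eqref{eqn:subexact1}. (Note: this already yields the first assertion of Theorem \ref{thm:leavesfalling}; the identification $\mathcal{F}\cong \Aut(\sigma_\xi)/\langle\sigma_\xi\rangle$ under the type CR hypothesis would need the separate later argument, not this one.)

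The main obstacle is pinning down $\operatorname{Im} R_\mu$ exactly as $\mathbb{Z}$ rather than merely showing it is discrete or finitely generated. The containment $\langle \lambda_{PF}\rangle \subseteq \operatorname{Im} R_\mu$ is the easy direction via self-induction. The reverse — that no element scales $\mu$ by an "unexpected" factor, so that $\operatorname{Im} R_\mu$ is no bigger than a cyclic group — requires controlling all possible ratios $\mu(C)/\mu(D)$ over all clopen cross sections $C,D$ with conjugate return systems. I expect this to follow from the structure of the invariant measure: values $\mu(C)$ for clopen $C$ form a subset of $\mathbb{Z}[\lambda_{PF}^{-1}] \cap [0,1]$ with the crucial property that conjugacy of return systems forces $\mu(C)/\mu(D)$ to be a unit in a ring controlled by the substitution, and discreteness of $\operatorname{Im} R_\mu$ then comes from the fact that the leaf-scaling cocycle $\alpha_f$ has slopes bounded in terms of the return-time function, so $R_\mu([f])$ cannot be arbitrarily close to $1$ without equalling $1$. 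Once discreteness is established, the subgroup of $\mathbb{R}_{>0}^*$ is cyclic, and comparison with the explicitly exhibited element of infinite order forces it to be $\cong \mathbb{Z}$.
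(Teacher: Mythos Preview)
Your proposal has a genuine gap in the kernel argument, and the image argument is too vague to be convincing.

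\textbf{The kernel.} You assert that unique ergodicity gives ``$R_\mu([f])=1 \iff f^*[1]=[1]$''. This is false. From $R_\mu([f]) = \tau_\mu(f^*[1]) = 1 = \tau_\mu([1])$ you only get $f^*[1]-[1]\in\Inf\mathcal{G}_{\sigma_\xi}$. Substitution subshifts can have nontrivial infinitesimals---the Thue--Morse system is the standard example, and the paper points this out explicitly. So you cannot conclude $f^*[1]=[1]$, and hence cannot invoke Theorem~\ref{thm:coinvariantskernel} to identify $\ker R_\mu$ with $\Im\Psi$. In fact, if your argument worked it would prove $\mathcal{F}\cong\Aut(\sigma_\xi)/\langle\sigma_\xi\rangle$ for \emph{every} primitive substitution, i.e.\ Theorem~\ref{thm:splitting} without the CR hypothesis; the paper needs that hypothesis precisely to handle infinitesimals. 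The paper's route is different: it shows (Lemma~\ref{lemma:amongferns}) that any $[f]\in\ker R_\mu$ fixing an asymptotic leaf is isotopic to the identity, by conjugating into the self-similar tiling space $\Omega_\xi$ and using Kwapisz's ironing-out procedure $\xi_\Omega^{-jm}f'\xi_\Omega^{jm}$ to kill the cocycle. Since there are only finitely many asymptotic leaves, $\ker R_\mu$ then embeds in a finite permutation group.

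\textbf{The image.} Your discreteness argument does not work: there is no reason the ratios $\mu(C)/\mu(D)$ over all flow codes should be bounded away from $1$, since one can take cross sections with arbitrarily long return times. The paper instead (i) uses Lemma~\ref{lemma:imagetrace} to place $\Im\tau_\mu$ inside a localization $S_\xi^{-1}\mathbb{Z}[\lambda_\xi]$, so $\Im R_\mu$ sits in its unit group, which is finitely generated by the $S$-unit theorem; and (ii) shows rank one by proving that for any $\alpha\in\Im R_\mu$ there exist $p,q$ with $\alpha^p=\lambda_\xi^q$. Step (ii) is the substantive one: given a flow code with ratio $\alpha>1$, self-induction produces a new primitive substitution $\theta$ with Perron eigenvalue $\alpha$ and $\Omega_\theta$ homeomorphic to $\Omega_\xi$; then the Barge--Swanson rigidity theorem forces $\xi_\Omega^m$ and $\theta_\Omega^n$ to be conjugate for some $m,n$, whence $\lambda_\xi^m=\alpha^n$ by comparing entropies. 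Your proposal does not supply any substitute for this rigidity input.
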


Since the group on the right in \eqref{eqn:subexact1} is $\mathbb{Z}$, the sequence splits. Choosing a splitting map $s \colon \mathbb{Z} \to \mathcal{M}(\sigma_{\xi})$ gives a presentation of $\mathcal{M}(\sigma_{\xi})$ as a semidirect product $\mathcal{M}(\sigma_{\xi}) \cong \mathcal{F} \rtimes \mathbb{Z}$, and the associated structure map $\mathbb{Z} \to \Aut(\mathcal{F})$ can be determined by examining the action of $s(1)$ on the asymptotic leaves of $\Sigma_{\sigma_{\xi}}X_{\xi}$. It also follows that $\mathcal{M}(\sigma_{\xi})$ is virtually $\mathbb{Z}$. \\
\indent Let us say a bit more about the group $\mathcal{F}$. The proof of Theorem \ref{thm:substhm1} shows that $\mathcal{F}$ always embeds as a subgroup of the permutation group on the set of asymptotic classes, which is finite. Moreover, it follows from Theorem \ref{thm:finiteorderfrends} that $\mathcal{F}$ consists of self-flow equivalences induced by automorphisms of return systems of $(X_{\xi},\sigma_{\xi})$.\\
\indent The second main goal of this section is to show that, for a large class of substitutions which we now define, we can identify $\mathcal{F}$ with the group $\Aut(\sigma_{\xi}) / \langle \sigma_{\xi} \rangle $. Before defining this class, we first briefly introduce the necessary notation. \\
\indent Define the roof function $r_{PF} \colon X_{\xi} \to \mathbb{R}$ by $r_{PF}(x) = v_{x_{0}}$, where $x_0$ denotes the symbol of $x$ at the $0^{\textnormal{th}}$ coordinate. We denote the suspension $\Sigma_{T_{\xi}}^{r_{PF}}X_{\xi}$ by $\Omega_{\xi}$; the space $\Omega_{\xi}$ is also known as the tiling space, or hull, associated to an aperiodic tiling  of the real line which can be constructed from the data $\xi, v_{PF}^{(l)}$. There is a homeomorphism (see \cite[Theorem 1.1]{ClarkSadunsize})
\begin{equation}\label{eqn:singsthewhale}
\begin{gathered}
h_{\Omega} \colon \Sigma_{T}X \to \Omega_{\xi}\\
h_{\Omega} \colon (x,t) \longmapsto (x,t \cdot r_{PF}(x_{0}))
\end{gathered}
\end{equation}
and the induced map $(h_{\Omega})_{*} \colon \homeo \SuspT \to \homeo \Omega_{\xi}$ takes $\susxi$ to a substitution map which we denote by $\xi_{\Omega}$. The roof function $r_{PF}$ is distinguished in that $\Omega_{\xi}$ makes $\xi_{\Omega}$ into a self-similarity, in the sense that
\begin{equation*}
\xi_{\Omega}(x+t) = \xi_{\Omega}(x) + \lambda_{PF}t
\end{equation*}
for all $x \in \Omega_{\xi}, t \in \mathbb{R}$ (see \cite{BDcomplete}). \\

For a constant $c>0$, let $r_{c}$ denote the constant roof function $r_{c}(x) = c$. For any $c >0$, there is a homeomorphism
\begin{equation}
\begin{gathered}
h_{c} \colon \Sigma_{T}X \to \Sigma_{T}^{r_{c}}X\\
h_{c} \colon (x,t) \mapsto (x,ct).
\end{gathered}
\end{equation}
\begin{definition}\label{def:typeCR}
We say a substitution $\xi$ is of type CR if there exists a constant $c>0$ (which may depend on $\xi$) and a conjugacy $h_{CR} \colon  (\Omega_{\xi},\mathbb{R}) \to (\Sigma_{T}^{r_{c}}X_{\xi},\mathbb{R})$ satisfying the following:
\begin{equation}\label{eqn:CRleaves}
\big(h_{CR}\circ h_{\Omega} \big)(l) = h_{c}(l) \textnormal{ for every leaf } l \in \Sigma_{T}X,.
\end{equation} We denote by $\mathcal{S}_{CR}$ denote the collection of substitutions of type CR.
\end{definition}

Any substitution whose left Perron-Frobenius eigenvector is a scalar multiple of $\mathbf{1}$ (by $\mathbf{1}$ we mean the vector of all 1's) is clearly of type CR (such substitutions are precisely those dual to a constant length substitution, i.e. for every letter $i \in \mathcal{A}$, \# $i$'s in $\xi(j)$ is independent of $j$). More generally, it follows from \cite[Theorem 3.1]{ClarkSadunsize} that if the incidence matrix $M_{\xi}$ satisfies
\begin{align*}
(\lambda_{PF}^{k}v_{PF}^{(l)} - c \cdot \textbf{1})M^{m} \xrightarrow{m \to \infty} 0 \hspace{.11in} \textnormal{ for some } k
\end{align*}
then $\xi$ is of type CR using the constant $c$. From this one can deduce (see \cite[Corollary 3.2]{ClarkSadunsize}) that substitutions of Pisot type (i.e. substitutions whose Perron-Frobenius eigenvalue is a Pisot number) are of type CR.
\begin{theorem}\label{thm:splitting}
Let $\xi$ be a primitive substitution and $(X_{\xi},\sigma_{\xi})$ the minimal subshift associated to $\xi$. If $\xi$ is of type CR then the sequence
\begin{equation}\label{eqn:subsplitting}
1 \to \Aut(\sigma_{\xi})/\langle \sigma_{\xi} \rangle \stackrel{\Psi}\longrightarrow \mathcal{M}(T) \xrightarrow{R_{\mu}} \mathbb{Z} \to 1
\end{equation}
is exact.
\end{theorem}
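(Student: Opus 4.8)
The plan is to establish exactness of \eqref{eqn:subsplitting} by checking three things: that $\Psi$ is injective, that $R_\mu$ is surjective, and that $\Im \Psi = \ker R_\mu$. The first is immediate from Proposition \ref{prop:autembedding}. For surjectivity of $R_\mu$, I would use the self-similarity $\xi_\Omega(x+t) = \xi_\Omega(x) + \lambda_{PF} t$ together with the homeomorphism $h_\Omega$ of \eqref{eqn:singsthewhale}: pulling $\xi_\Omega$ back to $\Sigma_T X$ gives a self-flow equivalence $\tilde\xi$ whose cocycle scales lengths (on average, against $\mu$) by $\lambda_{PF}$, so by Lemma \ref{lemma:ratiomeasures} $R_\mu([\tilde\xi])$ is the class corresponding to $\lambda_{PF}$, a generator of the target $\mathbb{Z}$ (here $R_\mu$ lands in the subgroup of $\mathbb{R}_{>0}^*$ generated by $\lambda_{PF}$, which is identified with $\mathbb{Z}$; the fact that this is all of $\Im R_\mu$ is part of Theorem \ref{thm:substhm1}). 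The inclusion $\Im \Psi \subset \ker R_\mu$ is trivial since any $\Psi(\alpha)$ comes from an automorphism, hence preserves $\mu$ and fixes $[1]$.

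The substantive direction is $\ker R_\mu \subset \Im \Psi$. Suppose $[f] \in \mathcal{M}(\sigma_\xi)$ with $R_\mu([f]) = 1$; I want to conclude $f^*[1] = [1]$ in $\mathcal{G}_{\sigma_\xi}$, because then Theorem \ref{thm:coinvariantskernel} immediately gives $[f] \in \Im \Psi$. Now $R_\mu([f]) = 1$ only says $\tau_\mu(f^*[1]) = \tau_\mu([1]) = 1$, i.e. $f^*[1] - [1] \in \Inf(\mathcal{G}_{\sigma_\xi})$ (using unique ergodicity of a primitive substitution subshift, so there is a single state). For the Thue--Morse-type examples this infinitesimal part can be nonzero, so the claim that $f^*[1] = [1]$ is genuinely where the type CR hypothesis must be used. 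The idea (the "ironing out" procedure alluded to in the introduction, following \cite{KwapiszRigidity}) is: represent $[f]$ by a flow code $(\phi, C, D)$; the cocycle discrepancy $r_D \circ \phi - r_C$ is, up to a coboundary, a fixed infinitesimal-valued function. One conjugates $f$ by the substitution self-flow equivalence $\tilde\xi$ (or its pullback via $h_{CR} \circ h_\Omega$, which by \eqref{eqn:CRleaves} behaves like the simple scaling $h_c$ on leaves) and exploits the self-similarity to show the cocycle discrepancy of $\tilde\xi^{-n} f \tilde\xi^{n}$ shrinks geometrically. In the type CR setting, passing to the constant roof function $r_c$ and using that $h_{CR}\circ h_\Omega$ takes leaves to $h_c$ of leaves, the discrepancy becomes uniformly bounded in time after finitely many such conjugations, whence Gottschalk--Hedlund (as in the proof of Theorem \ref{thm:coinvariantskernel}) makes it a coboundary; this forces $f^*[1] = [1]$ and we are done.

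Concretely, the order of operations I would follow is: (1) dispatch injectivity of $\Psi$ and the trivial inclusion $\Im\Psi \subset \ker R_\mu$; (2) construct $\tilde\xi$ from $\xi_\Omega$ and compute $R_\mu([\tilde\xi])$ via Lemma \ref{lemma:ratiomeasures} to get surjectivity of $R_\mu$ (and, combined with Theorem \ref{thm:substhm1}, identify the target with $\mathbb{Z}$); (3) take $[f] \in \ker R_\mu$, pick a flow code, and reduce the goal to showing the cocycle $\alpha_f(z,t) - t$ (suitably normalized) is bounded; (4) use the type CR conjugacy to transfer to the constant roof function, where the substitution map acts as a clean scaling on leaves by \eqref{eqn:CRleaves}, and run the Kwapisz-style ironing-out/renormalization to bound the discrepancy; (5) apply Gottschalk--Hedlund and then Theorem \ref{thm:coinvariantskernel} to conclude $[f] \in \Im\Psi$.

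The main obstacle is step (4): controlling the interaction between the arbitrary flow code representing $[f]$ and the rigid self-similar structure of $\xi_\Omega$. The subtlety is that $f$ and $\tilde\xi$ need not commute, and the return-time data of the cross sections $C, D$ has no a priori relation to the Perron--Frobenius roof function; the type CR hypothesis is exactly what lets one straighten everything out by moving to a constant roof and tracking how the leaf-scaling eigenvalue $\lambda_{PF}$ contracts the infinitesimal discrepancy under renormalization. Getting the uniform-in-$t$ bound — rather than merely a pointwise or measure-theoretic one — out of finitely many renormalization steps, and checking that the ironing-out does not itself introduce an obstruction, is the technical heart of the argument.
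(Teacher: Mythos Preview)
Your outline is right up through the easy parts (injectivity of $\Psi$, $\Im\Psi\subset\ker R_\mu$, surjectivity of $R_\mu$), but step~(4)--(5) has a genuine gap. The ironing-out procedure of \cite{KwapiszRigidity} does \emph{not} show that the cocycle discrepancy of $\tilde\xi^{-n}f\tilde\xi^{n}$ becomes uniformly bounded after finitely many conjugations. On $\Omega_\xi$ one has $\alpha_{g'_n}(x,t)-t=\lambda_{PF}^{-n}\bigl(\alpha_{f'}(\xi_\Omega^nx,\lambda_{PF}^nt)-\lambda_{PF}^nt\bigr)$, and since $\alpha_{f'}(\,\cdot\,,s)-s$ is \emph{not} bounded in $s$ a priori (that is exactly what you are trying to prove), no geometric shrinking follows. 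What Kwapisz actually gives is equicontinuity of the sequence $g'_n$ and an Arzel\`a--Ascoli limit $f'_\infty$ along a subsequence, satisfying $\alpha_{f'_\infty}(x,t)=\lambda_{f'}t$. There is no reason for $f'_\infty$ to be isotopic to $f'$, so you cannot simply read off $f^*[1]=[1]$ and invoke Theorem~\ref{thm:coinvariantskernel}.

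The paper's proof takes a different route at this point. From $[f]\in\ker R_\mu$ one gets (Lemma~\ref{lemma:thegracefulmouse}) that $\lambda_{f'}=1$, so the limit $f'_\infty$ is an $\mathbb{R}$-conjugacy of $\Omega_\xi$. The type CR hypothesis is used to push $f'_\infty$ through $h_{CR}$ to the constant-roof suspension, where Proposition~\ref{prop:conjisoauto} yields a genuine automorphism $\phi\in\Aut(\sigma_\xi)$. The remaining (and delicate) step is to show $[f]=[\Psi(\phi)]$: this is done by tracking an asymptotic leaf through the conjugacies $h_\Omega,h_{CR},h_c$ using condition~\eqref{eqn:CRleaves} (together with the fact that $\xi_\Omega^j$ fixes all asymptotic leaves) to see that $f^{-1}\Psi(\phi)$ fixes an asymptotic leaf, and then invoking Lemma~\ref{lemma:amongferns}. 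In short, the CR hypothesis is not used to kill an infinitesimal obstruction to $f^*[1]=[1]$ via Gottschalk--Hedlund; it is used to manufacture the candidate automorphism $\phi$ out of the ironed-out limit and to control its action on asymptotic leaves. Your proposal is missing both Lemma~\ref{lemma:amongferns} and the asymptotic-leaf bookkeeping, which are the actual mechanism linking $f$ to $\Psi(\phi)$.
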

\begin{remark}
Analogous to \eqref{eqn:subexact1}, the sequence \eqref{eqn:subsplitting} also splits, and hence in the CR case $\mathcal{M}(\sigma_{\xi})$ is isomorphic to the semidirect product $\Aut(\sigma_{\xi}) / \langle \sigma_{\xi} \rangle \rtimes \mathbb{Z}$.
\end{remark}

\indent We now prepare for the proofs of Theorems \ref{thm:substhm1} and \ref{thm:splitting}. Let $r$, be a roof function and suppose $(\Sigma_{T}^{r}X,\mathbb{R})$ has a unique $\mathbb{R}$-invariant probability measure $\mu$. Given $f \in \homeo \SuspT$ there exists $\lambda_{f} \in \mathbb{R}$ such that for any $x \in \SuspT$
\begin{equation*}
\frac{1}{t}\alpha_{f}(x,t) \xrightarrow{t \to \infty} \lambda_{f}.
\end{equation*}
This can be shown using the Ergodic Theorem (see \cite[Lemma 4.1]{KwapiszRigidity} for details).\\
\indent Recall for a space $Y$ we denote by $\pi^{1}(Y)$  the group $[Y,S^{1}]$. For any class $[g] \in \pi^{1}(\Sigma_{T}^{r}X)$, we may choose a leaf-wise smooth (with respect to the leaf-wise derivative $df(z) = \lim\limits_{t \to 0}\frac{f(z+t) - f(z)}{t}$) representative $g_{s} \colon \Sigma_{T}^{r}X \to S^{1}$ of $[g]$ and define a homomorphism
\begin{equation}
\begin{gathered}
C_{\mu} \colon \pi^{1}(\Sigma_{T}^{r}X) \longrightarrow \mathbb{R}\\
C_{\mu} \colon [g] \longmapsto \int_{\Sigma_{T}^{r}X}\frac{1}{2 \pi i}g_{s}^{-1}dg_{s} \hspace{.07in} d\mu_{r}.
\end{gathered}
\end{equation}
Upon identifying $\pi^{1}(\Sigma_{T}^{r}X)$ with $\check{H}^{1}(\Sigma_{T}^{r}X,\mathbb{Z})$, the map $C_{\mu}$ is the degree one part of the Ruelle-Sullivan map (see \cite{KPRuelleSullivan}).

The relevant cases for us we notate by
\begin{gather*}
C_{\Omega} \colon \pi^{1}(\Omega_{\xi}) \to \mathbb{R}\\
C_{\Sigma_{T}X} \colon \pi^{1}(\Sigma_{T}X) \to \mathbb{R}.
\end{gather*}
and denote the kernels by
\begin{gather*}
\Inf_{\Omega} = \ker C_{\Omega}\\
\Inf_{\Sigma_{T}X} = \ker C_{\Sigma_{T}X}.
\end{gather*}
Recall $h_{\Omega} \colon \Sigma_{T}X \to \Omega_{\xi}$ denotes the homeomorphism defined in \eqref{eqn:singsthewhale}.
\begin{lemma}\label{lemma:followtheinfs}
The isomorphism $(h_{\Omega}^{*})^{-1} \colon \pi^{1}(\Sigma_{T}X) \to \pi^{1}(\Omega_{\xi})$ induced by the homeomorphism $h_{\Omega}^{-1} \colon \Omega_{\xi} \to \Sigma_{T}X$ satisfies $(h_{\Omega}^{*})^{-1}(\Inf_{\Sigma_{T}X}) \subset \Inf_{\Omega}$.
\end{lemma}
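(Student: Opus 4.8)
The plan is to reduce the statement to a Birkhoff averaging computation by exploiting the standard interpretation of the degree-one Ruelle--Sullivan map $C_{\mu}$ as an \emph{average winding rate along the flow}. Let $\nu$ denote the unique $\sigma_{\xi}$-invariant probability measure on $X_{\xi}$. For any continuous $g \colon \Sigma_{T}X \to S^{1}$, write $g(z+t) = g(z)e^{2\pi i G_{z}(t)}$ for the continuous lift with $G_{z}(0)=0$. The cocycle identity $G_{z}(t_{1}+t_{2}) = G_{z}(t_{1}) + G_{z+t_{1}}(t_{2})$ together with unique ergodicity of the suspension flow gives $\tfrac{1}{t}G_{z}(t) \to C_{\Sigma_{T}X}([g])$ as $t \to \infty$, uniformly in $z$: apply Birkhoff to the continuous function $z \mapsto G_{z}(1)$ to handle integer $t$, and observe that $G_{z}$ varies by a uniformly bounded amount over one unit of flow-time to pass to all real $t$; the limit equals $C_{\Sigma_{T}X}([g])$ because for a leafwise-smooth representative $g_{s}$ one has $G_{z}(1) = \int_{0}^{1}\dot G_{z+u}(0)\,du$, which integrates against $\mu$ to the Ruelle--Sullivan value. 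The identical statement holds verbatim on $\Omega_{\xi}$, with limit $C_{\Omega}$.

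Next I would take $[\eta] \in \Inf_{\Sigma_{T}X}$, i.e.\ $C_{\Sigma_{T}X}([\eta]) = 0$, pick a continuous representative $\eta$, and set $\psi = \eta \circ h_{\Omega}^{-1} \colon \Omega_{\xi} \to S^{1}$, a continuous representative of $(h_{\Omega}^{*})^{-1}[\eta]$. Fix $z = (x,0) \in \Gamma$ and $w = h_{\Omega}(z)$. By \eqref{eqn:singsthewhale}, $h_{\Omega}$ restricts on the leaf through $z$ to an orientation-preserving, piecewise-linear time change carrying $z+n$ to $w+s_{n}$, where $s_{n} = \sum_{j=0}^{n-1}r_{PF}((\sigma_{\xi}^{j}x)_{0}) = \sum_{j=0}^{n-1}v_{x_{j}}$; since winding numbers are invariant under such reparametrizations, the lift $\Psi_{w}$ of $\psi$ satisfies $\Psi_{w}(s_{n}) = G_{z}(n)$. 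Hence
\begin{equation*}
\frac{\Psi_{w}(s_{n})}{s_{n}} \;=\; \frac{G_{z}(n)/n}{\frac{1}{n}\sum_{j=0}^{n-1}v_{x_{j}}} \;\xrightarrow{\; n \to \infty\;}\; \frac{C_{\Sigma_{T}X}([\eta])}{\int_{X_{\xi}}r_{PF}\,d\nu} \;=\; 0,
\end{equation*}
the denominator limit being positive and coming from Birkhoff applied to the continuous function $r_{PF}$ on $(X_{\xi},\sigma_{\xi},\nu)$. Since $s_{n} \to \infty$, consecutive $s_{n}$ differ by at most $\max_{i}v_{i}$, and $\Psi_{w}$ varies by a bounded amount over each such interval, this forces $\tfrac{1}{s}\Psi_{w}(s) \to 0$ as $s \to \infty$; by the first paragraph this limit is $C_{\Omega}\bigl((h_{\Omega}^{*})^{-1}[\eta]\bigr)$, so $(h_{\Omega}^{*})^{-1}[\eta] \in \Inf_{\Omega}$. (Dropping the assumption $C_{\Sigma_{T}X}([\eta])=0$, the same computation yields $C_{\Omega} \circ (h_{\Omega}^{*})^{-1} = \bigl(\int r_{PF}\,d\nu\bigr)^{-1}C_{\Sigma_{T}X}$ as homomorphisms $\pi^{1}(\Sigma_{T}X) \to \mathbb{R}$, so the inclusion is in fact an equality.)

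The one genuinely delicate point is that $C_{\mu}$ was defined using leafwise-\emph{smooth} representatives, whereas $\psi = \eta \circ h_{\Omega}^{-1}$ is only leafwise piecewise-smooth, with corners over $\Gamma$ where the roof function $r_{PF}$ jumps. This is precisely what the reformulation in the first paragraph handles: the quantity $\lim_{t}\tfrac{1}{t}G_{z}(t)$ is defined for any continuous representative, is a homotopy invariant, and agrees with $C_{\mu}$, so one may compute entirely with the continuous map $\psi$ and never differentiate it. A secondary, routine matter is the passage from convergence of $\Psi_{w}(s_{n})/s_{n}$ along the particular times $s_{n}$ to convergence of $\Psi_{w}(s)/s$ along all real $s$; this uses only the uniform positivity of the eigenvector entries $v_{i}$ (so that $s_{n}\to\infty$ with bounded gaps) and a uniform bound on the oscillation of $\Psi_{w}$ over one unit of flow-time.
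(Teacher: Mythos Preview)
Your argument is correct, but it takes a genuinely different route from the paper's. The paper's proof is a three-line order-theoretic argument: it uses the characterization $a \in \Inf_{\Sigma_{T}X}$ iff $[1] - na \ge 0$ for all $n \in \mathbb{Z}$, observes that $(h_{\Omega}^{-1})^{*}$ is order-preserving (because $h_{\Omega}$ is a flow equivalence and the winding order is defined purely in terms of monotonicity of lifts along leaves), and concludes from $(h_{\Omega}^{-1})^{*}[1] - n(h_{\Omega}^{-1})^{*}(a) \ge 0$ for all $n$ that $C_{\Omega}\bigl((h_{\Omega}^{-1})^{*}(a)\bigr) = 0$. You instead compute $C_{\Omega}$ directly as an asymptotic winding rate via Birkhoff, tracking how the piecewise-linear time change $h_{\Omega}$ rescales flow-time by the ergodic average of $r_{PF}$. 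What the paper's approach buys is brevity and the avoidance of any analytic subtleties (leafwise smoothness, passing from the subsequence $s_{n}$ to all $s$); what your approach buys is the explicit relation $C_{\Omega}\circ(h_{\Omega}^{*})^{-1} = \bigl(\int r_{PF}\,d\nu\bigr)^{-1}C_{\Sigma_{T}X}$, which immediately gives the inclusion as an \emph{equality} of kernels, a point the paper does not make.
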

\begin{proof}
Let $a \in \Inf_{\Sigma_{T}X}$.
By \cite[Sec. 1.9]{BHOrdered}, $a$ satisfies $na \le [1]$ for all $n \in \mathbb{Z}$, or equivalently, $[1]-na \ge 0$ for all $n$. Since $(h_{\Omega}^{-1})^{*}$ is order preserving, we have $(h_{\Omega}^{-1})^{*}([1]-na) \ge 0$ for all $n$, and hence $(h_{\Omega}^{-1})^{*}([1]) - n(h_{\Omega}^{-1})^{*}(a) \ge 0$. Note that $(h_{\Omega}^{-1})_{*}([1] \ge 0$. Then $(h_{\Omega}^{-1})^{*}([1]) \ge n (h_{\Omega}^{-1})^{*}(a)$ for all $n \in \mathbb{Z}$, which implies $C_{\Omega}(h_{\Omega}^{-1})^{*}(a) = 0$, so $(h_{\Omega}^{-1})^{*}(a) \in \Inf_{\Omega}$.
\end{proof}
Recall the isomorphism $\mathcal{G}_{T} \xrightarrow{\cong} \pi^{1}(\Sigma_{T}X)$ of Proposition \ref{prop:exponentialiso} given by
\begin{equation*}
[\gamma] \longmapsto [\eta_{\gamma}], \hspace{.07in} \eta_{\gamma}(x,t) = e^{2 \pi i t \gamma(x)}.
\end{equation*}
Since $\eta_{\gamma}^{-1}d \eta_{\gamma}(x,t) = 2 \pi i \gamma(x)$, the isomorphism $\mathcal{G}_{T} \xrightarrow{\cong} \pi^{1}(\Sigma_{T}X)$ takes $\Inf (\mathcal{G}_{T})$ to $\Inf_{\Sigma_{T}X}$.\\
\indent Define $1_{\Omega} \in C(\Omega_{\xi},S^{1})$ by $1_{\Omega}(x,t) = e^{2 \pi i t}$, and note that $C_{\Omega}(1_{\Omega}) = 1$. It follows from the Ergodic Theorem that $\lambda_{f} = C_{\Sigma_{T}^{r}X}(f^{*}(1_{\Omega}))$ (see for example \cite[Sec. 7]{JulienSadundeformations}).\\

\indent For any homeomorphism $g \colon \Sigma_{T}^{r_{1}}X \to \Sigma_{T}^{r_{2}}X$ there is an induced isomorphism of topological groups which we will denote by
\begin{gather*}
g_{*} \colon \homeo \Sigma_{T}^{r_{1}}X \to \homeo \Sigma_{T}^{r_{2}}X\\
g_{*}(f) = gfg^{-1}.
\end{gather*}
Thus for $h_{\Omega} \colon \Sigma_{T}X \to \Omega_{\xi}$ we have
\begin{gather*}
(h_{\Omega}){*} \colon \homeo \Sigma_{T}X \to \homeo \Omega_{\xi}\\
(h_{\Omega})_{*}(f) = h_{\Omega}fh_{\Omega}^{-1}.
\end{gather*}
\begin{lemma}\label{lemma:thegracefulmouse}
If $[f] \in \ker R_{\mu}$, then $\lambda_{(h_{\Omega})_{*}(f)} = 1$.
\end{lemma}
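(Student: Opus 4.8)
The plan is to transport the computation of $\lambda_{(h_\Omega)_*f}$ back to $\SuspT$ through the cohomology maps induced by $h_\Omega$, and then exploit the fact that on a uniquely ergodic system the relevant degree‑one Ruelle--Sullivan homomorphism is essentially unique. Note first that $(X_\xi,\sigma_\xi)$, being a primitive substitution subshift, is uniquely ergodic with invariant measure $\mu$, and consequently so is $(\Omega_\xi,\mathbb{R})$; thus $\lambda_{(h_\Omega)_*f}$ is defined, and by the Ergodic Theorem identity recalled above, $\lambda_{(h_\Omega)_*f} = C_\Omega\big(((h_\Omega)_*f)^*(1_\Omega)\big)$. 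Since $h_\Omega$ and $h_\Omega^{-1}$ are flow equivalences, the induced maps $h_\Omega^*\colon \pi^1(\Omega_\xi)\to\pi^1(\SuspT)$ and $(h_\Omega^{-1})^*\colon \pi^1(\SuspT)\to\pi^1(\Omega_\xi)$ are order isomorphisms, mutually inverse, and by functoriality $((h_\Omega)_*f)^* = (h_\Omega^{-1})^*\circ f^*\circ h_\Omega^*$ on $\pi^1(\Omega_\xi)$.

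The argument then proceeds in three steps. (1) Because $(X_\xi,\sigma_\xi)$ is uniquely ergodic, $p(\sigma_\xi)$ is a ray, so under the order isomorphism $\mathcal{G}_{\sigma_\xi}\cong\pi^1(\SuspT)$ of Proposition~\ref{prop:exponentialiso} every positive homomorphism $\pi^1(\SuspT)\to\mathbb{R}$ is a non-negative multiple of $C_{\SuspT}$ (which corresponds to $\tau_\mu$). As $(h_\Omega^{-1})^*$ is order-preserving and $C_\Omega$ is positive, $C_\Omega\circ(h_\Omega^{-1})^*$ is such a positive homomorphism, so $C_\Omega\circ(h_\Omega^{-1})^* = c\cdot C_{\SuspT}$ for some $c\ge 0$; evaluating at $h_\Omega^*(1_\Omega)$ and using $(h_\Omega^{-1})^*\circ h_\Omega^* = \id$ together with $C_\Omega(1_\Omega)=1$ gives
\[
c\cdot C_{\SuspT}\big(h_\Omega^*(1_\Omega)\big) = C_\Omega(1_\Omega) = 1
\]
(in particular $c>0$). (2) The hypothesis $[f]\in\ker R_\mu$ means $R_\mu([f])=1$, hence $\lambda_f=1$ and $f_*\tau_\mu=\tau_\mu$; translated through Proposition~\ref{prop:exponentialiso} this is the identity $C_{\SuspT}\circ f^* = C_{\SuspT}$. (3) Combining these,
\[
\lambda_{(h_\Omega)_*f} = C_\Omega\big((h_\Omega^{-1})^*\,f^*\,h_\Omega^*(1_\Omega)\big) = c\cdot C_{\SuspT}\big(f^*h_\Omega^*(1_\Omega)\big) = c\cdot C_{\SuspT}\big(h_\Omega^*(1_\Omega)\big) = 1,
\]
which is the claim.

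The chain of equalities in step~(3) is routine; the content lies in the two structural inputs. The first is step~(1): unique ergodicity of the substitution system forces the cone of positive homomorphisms on $\mathcal{G}_{\sigma_\xi}$ to be one-dimensional, which is exactly what allows us to write $C_\Omega\circ(h_\Omega^{-1})^*$ as a scalar multiple of $C_{\SuspT}$ and to pin down that scalar by a single evaluation at $h_\Omega^*(1_\Omega)$. The second is the cancellation in step~(2): it rests on the fact that in the uniquely ergodic setting $f^*$ dilates the unique state $\tau_\mu$ by precisely $\lambda_f = R_\mu([f])$, so that the hypothesis $[f]\in\ker R_\mu$ upgrades to the honest equality $C_{\SuspT}\circ f^* = C_{\SuspT}$. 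I expect the latter to be the main point to get right, together with the bookkeeping of which spaces the various $\pi^1$-elements live in; note in particular that since $h_\Omega$ does not intertwine the two flows, the argument must be run through the cohomology maps $h_\Omega^*$ rather than by comparing the cocycles $\alpha_f$ and $\alpha_{(h_\Omega)_*f}$ directly.
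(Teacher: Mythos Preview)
Your proof is correct and reaches the same conclusion through a closely related but slightly reorganized argument. The paper's version isolates as a separate lemma (Lemma~\ref{lemma:followtheinfs}) that $(h_\Omega^{-1})^*$ carries $\Inf_{\Sigma_T X}$ into $\Inf_\Omega$, proved via the order-theoretic characterization $na\le[1]$ of infinitesimals; it then writes $f^*h_\Omega^*(1_\Omega)=h_\Omega^*(1_\Omega)+b$ with $b$ infinitesimal and kills $(h_\Omega^{-1})^*(b)$ under $C_\Omega$. Your route bypasses this lemma: you use unique ergodicity directly to identify the positive homomorphism $C_\Omega\circ(h_\Omega^{-1})^*$ as a scalar multiple of $C_{\Sigma_T X}$, then apply $C_{\Sigma_T X}\circ f^*=C_{\Sigma_T X}$. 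The content is the same (both rest on the order-preservation of $(h_\Omega^{-1})^*$ and the one-dimensionality of the state space), but your formulation is a touch more streamlined in that the infinitesimal bookkeeping is absorbed into the single proportionality statement $C_\Omega\circ(h_\Omega^{-1})^*=c\cdot C_{\Sigma_T X}$.
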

\begin{proof}
Since $[f] \in \ker R_{\mu}$, for any $a \in \mathcal{G}_{T}$ we have $f^{*}(a) - a \in \Inf(\mathcal{G}_{T})$, and hence $f^{*}h_{\Omega}^{*}(1_{\Omega}) - h_{\Omega}^{*}(1_{\Omega}) \in \Inf_{\Sigma_{T}X}$. Write $f^{*}h_{\Omega}^{*}(1_{\Omega}) = h_{\Omega}^{*}(1_{\Omega}) + b$ for some $b \in \Inf_{\Sigma_{T}X}$. Then
\begin{gather*}
C_{\Omega}\Big(\big((h_{\Omega})_{*}(f)\big)^{*}(1_{\Omega})\Big) = C_{\Omega}\big((h_{\Omega}^{-1})^{*} f^{*} h_{\Omega}^{*} (1_{\Omega})\big)\\
= C_{\Omega}\big((h_{\Omega}^{-1})^{*} (h_{\Omega}^{*}(1_{\Omega}) + b)\big) = C_{\Omega}\big(1_{\Omega} + (h_{\Omega}^{-1})^{*}(b)\big)\\
= C_{\Omega}(1_{\Omega}) = 1
\end{gather*}
where we have used that $C_{\Omega}((h_{\Omega}^{-1})^{*}(b)) = 0$ by Lemma \ref{lemma:followtheinfs}. Thus \begin{equation*}
C_{\Omega}\Big(\big((h_{\Omega})_{*}(f)\big)^{*}(1_{\Omega})\Big) = 1,
\end{equation*}
and $\lambda_{(h_{\Omega})_{*}(f)} = 1$.
\end{proof}

Finally, we note that for a primitive substitution $\xi$, $\Sigma_{\sigma_{\xi}}X$ always has a finite and non-zero number of asymptotic leaves (see either \cite[V.21]{Queff} or \cite{BDHasymptotic}).\\

The following lemma will play an important role.
\begin{lemma}\label{lemma:amongferns}
Suppose $(X_{\xi},\sigma_{\xi})$ is a minimal substitution subshift and $[f] \in \mathcal{M}(\sigma_{\xi})$. If $[f] \in \ker R_{\mu}$ and $f$ fixes an asymptotic leaf, then $f$ is isotopic to the identity.
\end{lemma}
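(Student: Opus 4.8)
The plan is to use the CR-type machinery together with the self-similarity of $\xi_\Omega$ to ``iron out'' $f$ to the identity, following the strategy of \cite{KwapiszRigidity}. First I would transport the problem to the tiling space: via the homeomorphism $h_\Omega \colon \Sigma_T X \to \Omega_\xi$ of \eqref{eqn:singsthewhale}, replace $f$ by $g = (h_\Omega)_*(f) \in \homeo \Omega_\xi$, which fixes the corresponding asymptotic leaf of $\Omega_\xi$ and, by Lemma \ref{lemma:thegracefulmouse}, satisfies $\lambda_g = 1$. So there is a leaf-wise continuous $\beta \colon \Omega_\xi \to \mathbb{R}$ with $g(z) = z + \beta(z)$ on that asymptotic leaf, and the cocycle $\alpha_g(z,t) - t$ is sublinear (its Birkhoff-type averages tend to $0$ because $\lambda_g = 1$). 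By Proposition \ref{prop:leafaction}, it suffices to show $g$ maps every leaf to itself.

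The heart of the argument is the renormalization. Since $\xi_\Omega$ is a self-similarity, $\xi_\Omega(z + t) = \xi_\Omega(z) + \lambda_{PF} t$, so conjugating $g$ by $\xi_\Omega^n$ contracts the ``time displacement'' by $\lambda_{PF}^{-n}$ on the pieces where $\alpha_g - t$ is controlled, while the asymptotic leaf (being $\xi_\Omega$-invariant as a set, up to the permutation action on asymptotic classes, so after passing to a power we may assume it is fixed) is preserved. The key point: on the fixed asymptotic leaf, $g$ agrees with a translation by $\beta$, and the two asymptotic rays of that leaf force $\beta$ to be bounded; applying $\xi_\Omega^{-n} g \xi_\Omega^{n}$ and using that the geometry near the fixed leaf is reproduced at every scale (this is where type CR is used — it guarantees the leaf-wise geometry is exactly rescaled by $h_{CR} \circ h_\Omega = h_c$), one shows the displacement of $g$ along \emph{any} leaf is bounded by a quantity that can be made arbitrarily small, hence zero. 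Concretely, I would show $\alpha_g(z,t) - t$ is a bounded cocycle on all of $\Omega_\xi$ (not just the fixed leaf): it is bounded on the fixed asymptotic leaf, and any other leaf is, under high iterates of $\xi_\Omega^{-1}$, pushed arbitrarily close to that leaf on arbitrarily long time intervals, so sublinearity plus the self-similarity upgrades the bound to all leaves. Then Gottschalk--Hedlund (as in the proof of Theorem \ref{thm:coinvariantskernel}) gives $\alpha_g(z,t) - t = \eta(z) - \eta(z+t)$ for a continuous $\eta$, and $z \mapsto g(z) + \eta(z)$ is a flow conjugacy; but a flow conjugacy that fixes a leaf and lies in $\ker R_\mu$ must be an automorphism fixing a point of a dense-orbit return system, hence the identity there, so $g$ — and therefore $f$ — fixes every leaf and is isotopic to the identity.

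The main obstacle I expect is precisely the step of propagating boundedness of $\alpha_g(\cdot,\cdot) - t$ from the single fixed asymptotic leaf to all of $\Omega_\xi$. On the fixed leaf it is essentially automatic (two asymptotic rays pin down the translation amount), but an arbitrary leaf need not be asymptotic to it; one must use that $\xi_\Omega$-iterates of any patch eventually look like patches near the asymptotic leaf — i.e. the finiteness and structure of asymptotic classes for primitive substitutions (cited from \cite{Queff}, \cite{BDHasymptotic}) — together with the self-similarity to control the slopes of the piecewise-linear cocycle $\alpha_g$ uniformly. Care is also needed at the outset to pass to a power of $f$ so that the relevant asymptotic \emph{class} is genuinely fixed (not merely permuted) and to check this does not cost anything, since $f$ isotopic to the identity follows once some power of a representative is, given that $\ker R_\mu$ sits inside $\Aut$ of a return system modulo the shift. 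I would also need to verify that the hypothesis is used correctly: Lemma \ref{lemma:amongferns} is stated for a general primitive substitution, so the CR condition should \emph{not} be invoked here; instead the self-similarity of $\xi_\Omega$ on the canonical roof $r_{PF}$ alone suffices for the renormalization, and I would re-examine whether boundedness of $\alpha_g - t$ on all leaves really follows without CR — if not, the lemma as stated would need the displacement argument to be run directly on $\Omega_\xi$ using only the contraction $\lambda_{PF}^{-n}$ and finiteness of asymptotic classes, which is the delicate part.
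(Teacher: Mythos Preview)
Your setup is correct and matches the paper: transport to $\Omega_\xi$ via $h_\Omega$, use Lemma~\ref{lemma:thegracefulmouse} to get $\lambda_{(h_\Omega)_*(f)}=1$, and renormalize using the self-similarity $\xi_\Omega$. But the strategy you then pursue has a genuine gap, and the paper avoids it by a different mechanism.

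You try to show directly that $\alpha_g(z,t)-t$ is bounded on all of $\Omega_\xi$ (first on the fixed asymptotic leaf, then by propagation), after which Gottschalk--Hedlund would finish. Neither step is justified. On the fixed leaf $\ell$ you claim ``two asymptotic rays pin down the translation amount,'' but fixing $\ell$ only gives $g(z)=z+\beta(z)$ for $z\in\ell$, and $\lambda_g=1$ only yields that $\alpha_g(z,t)-t=\beta(z+t)-\beta(z)$ is $o(t)$, not bounded; the existence of an asymptotic partner $\ell'$ does not help, since you do not know $g$ fixes $\ell'$. The propagation step you yourself flag as the main obstacle, and your sketch appeals to the CR condition, which (as you note at the end) is not available here. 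So the argument as written does not close.

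The paper sidesteps this entirely. Instead of proving boundedness for $g=(h_\Omega)_*(f)$ itself, it considers the sequence $g'_m=\xi_\Omega^{-jm}\,g\,\xi_\Omega^{jm}$ (with $j$ chosen so that $\xi_\Omega^j$ fixes every asymptotic leaf), invokes \cite[Lemma~5.4]{KwapiszRigidity} to get equicontinuity, and extracts a subsequential limit $f'_\infty$ via Arzel\`a--Ascoli. By \cite[Theorem~5.3]{KwapiszRigidity} this limit already has \emph{linear} cocycle $\alpha_{f'_\infty}(x,t)=\lambda_g\,t=t$; no boundedness or Gottschalk--Hedlund argument is needed. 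Since each $g'_m$ fixes $h_\Omega(\ell)$, so does $f'_\infty$, and a flow-conjugacy fixing a dense leaf is isotopic to the identity. The final move is topological rather than cohomological: by \cite[Prop.~1.2]{APPsimplicity} the identity component in $\homeo\Sigma_T X$ is open, so some $g_{m_k}$ (hence $f$) already lies in it. The two ingredients you are missing are the passage to a \emph{limit} of renormalizations (which produces an exact flow-conjugacy for free) and the openness of the identity component (which transfers the conclusion back to $f$).
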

\begin{proof}
\indent Suppose $[f] \in \ker R_{\mu}$ and fixes an asymptotic leaf $\ell$. For notational reasons, let us use the notation
\begin{equation*}
(h_{\Omega})_{*}(f) = f^{\prime} \in \homeo \Omega_{\xi}.
\end{equation*}
Since the number of asymptotic leaves is finite, there exists $j \in \mathbb{N}$ such that $\xi_{\Omega}^{j}$ fixes every asymptotic leaf. Consider the sequence of homeomorphisms
\begin{equation*}
g_{m}^{\prime} = (\xi_{\Omega}^{-j})^{m}f^{\prime}(\xi_{\Omega}^{j})^{m} = \xi_{\Omega}^{-jm}f^{\prime}\xi_{\Omega}^{jm}.
\end{equation*}
By \cite[Lemma 5.4]{KwapiszRigidity}, this sequence is equicontinuous in $\homeo \Omega_{\xi}$, and by Arzela-Ascoli converges along some subsequence $m_{k}$ to some $f^{\prime}_{\infty} \in \homeo \Omega_{\xi}$. Moreover, the homeomorphism $f^{\prime}_{\infty}$ satisfies $\alpha_{f^{\prime}_{\infty}}(x,t) = \lambda_{f^{\prime}}t$ for all $x \in \Omega_{\xi}, t \in \mathbb{R}$ (see Theorem 5.3 and its proof in \cite{KwapiszRigidity}). Since $[f] \in \ker R_{\mu}$, Lemma \ref{lemma:thegracefulmouse} implies $\lambda_{f^{\prime}} = 1$ and we have
\begin{equation}\label{eqn:treesongs}
\alpha_{f_{\infty}^{\prime}}(x,t) = t \textnormal{ for all } x \in \Omega_{\xi}, t \in \mathbb{R}.
\end{equation}
Since $h_{\Omega}(\ell)$ is an asymptotic leaf in $\Omega_{\xi}$, $\xi^{j}$ fixes $h_{\Omega}(\ell)$. Then, recalling that $f$ fixes $\ell$, we have that $g_{m}^{\prime}$, and thus $f_{\infty}^{\prime}$, fixes $h_{\Omega}(\ell)$. Since the leaf $h_{\Omega}(\ell)$ is dense in $\Omega_{\xi}$, \eqref{eqn:treesongs} implies $f_{\infty}^{\prime}$ is isotopic to the identity.\\
\indent Finally, we will show $f$ is isotopic to the identity. Let $g_{k} = (h_{\Omega})_{*}^{-1}(g_{m_{k}}^{\prime})$ and define $f_{\infty} = (h_{\Omega})_{*}^{-1}((f^{\prime})_{\infty})$. Then $g_{k} \to f_{\infty}$ in $\homeo \SuspT$  and $f_{\infty}$ is isotopic to the identity. By \cite[Prop. 1.2]{APPsimplicity}, the path component of the identity in $\homeo \SuspT$ is open, so for some $k$ we have $g_{k}$ is isotopic to the identity. But this implies $\tilde{\xi}^{-jm_{k}}f\tilde{\xi}^{jm_{k}}$ is isotopic to the identity, and hence $f$ is as well.\\
\end{proof}
We now prove Theorem \ref{thm:substhm1}.
\begin{proof}[Proof of Theorem \ref{thm:substhm1}] By Lemma \ref{lemma:amongferns}, the restriction of the asymptotic leaf representation (defined in \eqref{eqn:asymrep}) to $\ker R_{\mu}$
\begin{equation*}
\pi_{as}|_{\ker R_{\mu}} \colon \ker R_{\mu} \to P(\asy)
\end{equation*}
is injective. Since $\asy$ is finite, $P(\asy)$ is finite, showing that $\mathcal{F}$ is finite.\\
\indent It remains to show that $\Im R_\mu \subset \mathbb{R}^*$ is isomorphic to $\mathbb{Z}$. Before doing this, we record the following folklore lemma. Recall $v_{pf}^{(l)} = (v_{1},v_{2},\ldots,v_{d})$ denotes a left Perron-Frobenius eigenvector for the incidence matrix $M_{\xi}$. Define $s_{pf} = \sum_{i=1}^{d}v_{i}$, let $u_{i} = \frac{v_{i}}{s_{pf}}$, and let $u_{pf}^{(\ell)} = (u_{1},u_{2},\ldots,u_{d})$ denote the unique normalized left Perron-Frobenius eigenvector.
\begin{lemma}\label{lemma:imagetrace}
Let $\xi$ be an aperiodic primitive substitution and let $\mu$ denote the unique invariant probability measure for $(X_{\xi},T_{\xi})$. Then
\begin{equation}
\Im \tau_{\mu} = \{\frac{1}{\lambda_{\xi}^{m}}\sum_{i=0}^{d}n_{i}u_{i} \mid m \ge 0, n_{i} \in \mathbb{Z}\}.
\end{equation}
\end{lemma}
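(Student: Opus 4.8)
The plan is to compute $\Im \tau_\mu$ by identifying the group of coinvariants $\mathcal{G}_{T_\xi}$ with the stationary dimension group of the incidence matrix $M_\xi$, and then pushing forward the unique invariant measure $\mu$ through the standard description of $\tau_\mu$ on such a dimension group. First I would recall that for a primitive substitution $\xi$, the image $\tau_\mu(\mathcal{G}_{T_\xi})$ is generated (as a subgroup of $\mathbb{R}$) by the $\mu$-measures of the cylinder sets $[i]$, $i \in \{1,\dots,d\}$, together with the $\mu$-measures of all cylinders $[w]$ of admissible words $w$, since $C(X_\xi,\mathbb{Z})$ is spanned by indicator functions of clopen sets and these in turn by cylinder indicators. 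By unique ergodicity and the standard computation of frequencies in a primitive substitution subshift, $\mu([i]) = u_i = v_i / s_{pf}$, so the $d$ generators $u_1, \dots, u_d$ already lie in $\Im \tau_\mu$; this gives the inclusion $\{\frac{1}{\lambda_\xi^m}\sum_i n_i u_i \mid m \ge 0, n_i \in \mathbb{Z}\} \supseteq \mathbb{Z}\text{-span}\{u_i\}$ trivially, and I must explain where the $\lambda_\xi^{-m}$ factors come from.

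The key structural input is the self-induced property (Theorem \ref{thm:selfinduced}): a substitution subshift is conjugate to its own return system on the clopen set $\bigcup_i \xi([i])$ of points whose orbit under the substitution structure begins a $\xi$-image, and this self-conjugacy scales $\mu$ by the factor $\lambda_\xi$. Concretely, the substitution map $\tilde\xi$ on $X_\xi$ sends $[i]$ into a union of cylinders whose total $\mu$-measure is $\lambda_\xi^{-1} u_i$ up to the combinatorics of $M_\xi$; more precisely $\mu(\tilde\xi([i])) $ and, iterating, $\mu$ of the level-$m$ cylinders coming from $\xi^m$ are of the form $\lambda_\xi^{-m} \cdot (\text{integer combination of the } u_i)$. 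Conversely, since $\xi$ is recognizable (Mossé), every admissible cylinder $[w]$ can be desubstituted: its $\mu$-measure is computed by partitioning $[w]$ into finitely many level-$m$ cylinders for $m$ large enough, so $\mu([w]) \in \lambda_\xi^{-m}\mathbb{Z}\text{-span}\{u_i\}$ for some $m$. This gives the reverse inclusion and hence equality. Equivalently, and perhaps more cleanly, one identifies $(\mathcal{G}_{T_\xi}, [1])$ with the stationary inductive limit $\varinjlim(\mathbb{Z}^d, M_\xi^T)$ and notes that $\tau_\mu$ on this limit is the map induced on each $\mathbb{Z}^d$ by pairing with the normalized Perron eigenvector $u_{pf}^{(\ell)}$, with the limit contributing exactly the denominators $\lambda_\xi^{-m}$ because $M_\xi^T u_{pf}^{(\ell)} = \lambda_\xi u_{pf}^{(\ell)}$; the image of the $m$-th copy of $\mathbb{Z}^d$ is $\lambda_\xi^{-m}\sum_i \mathbb{Z} u_i$ and the union over $m$ is the asserted set.

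The main obstacle I anticipate is the bookkeeping that shows a general admissible cylinder $[w]$ has $\mu$-measure lying in $\lambda_\xi^{-m}\mathbb{Z}\text{-span}\{u_i\}$ — i.e. that the generators $u_i$ together with the single scaling $\lambda_\xi$ suffice and no other denominators appear. This is really the assertion that $res_C$-type identifications (as in \eqref{eqn:moritaiso}) intertwine $\tau_\mu$ correctly under desubstitution, together with recognizability to guarantee that $m$ can be taken uniformly on a clopen partition refining $\{[w]\}$. Modulo citing Mossé's recognizability theorem and the dimension-group description of $\mathcal{G}_{T_\xi}$ for substitution systems (both standard, e.g. via \cite{DOPselfinduced} and \cite{GPS1}), the argument is routine; I would present it through the stationary dimension group picture to keep the computation transparent.
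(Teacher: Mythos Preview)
The paper does not actually prove this lemma: it is introduced with the phrase ``we record the following folklore lemma'' and no argument is given. So there is nothing to compare against.

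Your proposal is correct and is the standard way to establish this folklore fact. The clean version is exactly your second formulation: identify $(\mathcal{G}_{T_\xi},[1])$ with the stationary inductive limit $\varinjlim(\mathbb{Z}^{d},M_{\xi})$ (Durand--Host--Skau, Forrest), observe that the unique state is given on the $m$-th copy of $\mathbb{Z}^{d}$ by $n \mapsto \lambda_{\xi}^{-m}\langle n, u_{pf}^{(\ell)}\rangle$ (compatibility with the bonding maps forces this, since $u_{pf}^{(\ell)}$ is a left Perron eigenvector), and read off that the image of the $m$-th copy is $\lambda_{\xi}^{-m}\sum_{i}\mathbb{Z}u_{i}$. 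The union over $m$ is the asserted set, and both inclusions fall out at once.

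Your first formulation via recognizability and desubstitution of cylinders is also valid, but it is really the same computation unwound at the level of clopen sets; the dimension-group route you sketch at the end is preferable for exactly the reason you give --- it makes the bookkeeping transparent and avoids having to argue separately that no extraneous denominators appear. The one sentence fragment ``more precisely $\mu(\tilde\xi([i]))$ and, iterating, \ldots'' should be completed: the point is that $\mu(\tilde\xi(A)) = \lambda_{\xi}^{-1}\mu(A)$ for clopen $A$, which is what makes $\lambda_{\xi}^{-m}u_{i}$ lie in $\Im\tau_{\mu}$ and gives the inclusion $\supseteq$.
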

Continuing the proof of Theorem \ref{thm:substhm1}, let $S_{\xi} = \{(\frac{1}{\lambda_{\xi}})^{m}\cdot s_{pf}^{n} \mid m,n \ge 0\} \subset \mathbb{Z}[\lambda_{\xi}]$, and consider the localization $S_{\xi}^{-1}\mathbb{Z}[\lambda_{\xi}]$. Note that $\Im \tau_{\mu} \subset S_{\xi}^{-1}\mathbb{Z}[\lambda_{\xi}]$ by Lemma \ref{lemma:imagetrace}, so that $\Im R_{\mu}$ is contained in the group of units of $S_{\xi}^{-1}\mathbb{Z}[\lambda_{\xi}]$. By the $S$-unit Theorem (\cite[III.3.5]{Narkiewicz1}) (and a small argument, see for example \cite{Remymse}), the group of units in $S_{\xi}^{-1}\mathbb{Z}[\lambda_{\xi}]$ is finitely-generated. It follows that $\Im R_{\mu}$ is a finitely-generated free abelian group.\\
\indent All that remains then is to show that $\Im R_{\mu}$ is rank one. To do this, we will show the following: for any $\alpha \in \rm{Image}(R_\mu)$
there exists $p, q \in \mathbb{N}$ such that $\alpha^p = \lambda_{\xi}^q$. Given this, it follows that $\Im R_{\mu} / \langle \lambda_{\xi} \rangle$ is torsion and hence rank zero, so $\Im R_{\mu}$ is rank one as desired.

Suppose $\alpha \in \rm{Image}(R_\mu)$ and let $(\varphi,C,D)$ be a flow code with $R_\mu(\varphi) = \alpha$. By considering $\varphi^{-1}$ instead if necessary, we can assume that $\alpha>1$. Lemma \ref{lemma:ratiomeasures} implies $\mu(D) < \mu(C)$, so there exists (see the proof of \cite[Prop. 7]{DOPselfinduced}) a clopen $E \subset C$ with $\mu(E) = \mu(D)$ such that $(C, T_C)$ is conjugate to $(E, T_E)$. By Proposition \ref{prop:selfinduced}, $(C,T_{C})$ is thus a self-induced system. Note that $(C, T_C)$ is also uniquely ergodic, with unique $T_{C}$-invariant measure $\bar{\mu}$ given by $\bar{\mu}(A) = \mu(A)/\mu(C)$ for any $A \subset C$.

By \cite[Theorem 14]{DOPselfinduced}, $(C, T_{C})$ must be conjugate to an aperiodic, primitive substitution $(Y,S)$, given by $\theta:\A \to \A^*$ for some alphabet $\mathcal{A}$. Let $g \colon C \to Y$ denote such a conjugacy. The proof of \cite[Theorem 14]{DOPselfinduced} shows that that for any $a \in \A$, $|\theta(a)| = r_E(x)$, where $x \in E$ satisfies $g(x)_0 = a$. The value of the return time is well-defined, regardless of the choice of $x$. The induced transformation for the system $(\theta(Y), S_{\theta(Y)})$ is given by the lengths of substitution $\theta$ (see \cite[Cor. 5.11]{Queff})
$$S_{\theta(Y)}(z) = S^{|\theta(y_0)|}(y) \text{ where } z=\theta(y), y \in Y.$$
It follows that the return times of $x$ to $E$ are precisely the return times of $z=\theta(g(x))$ to $\theta(Y)$.

Computing the Birkhoff sums
$$\dfrac{1}{N} \sum_{i=0}^{N-1} r_U(T_U^ix)=\dfrac{1}{N} \sum_{i=0}^{N-1} r_{\theta(Y)}(S_{\theta(Y)}^iz),$$
we see that $\bar{\mu}(E) = \nu(\theta(Y))$, where $\nu$ is the unique invariant probability measure on $(Y,S)$. Thus, the Perron-Frobenius eigenvalue associated to $\theta$ is given by $1/\bar{\mu}(E) = \alpha$. By construction, $(X, T)$ and $(Y,S)$ are aperiodic primitive substitution systems which are flow equivalent. Since $\Sigma_{T}X$ is homeomorphic to $\Omega_{\xi}$ and $\Sigma_{S}Y$ is homeomorphic to $\Omega_{\theta}$, it follows that $\Omega_{\xi}$ and $\Omega_{\theta}$ are homeomorphic. Then by \cite[Theorem 2.1]{BSrigidity}, there must exist $m,n \in \mathbb{N}$ such that $\xi_{\Omega}^{m}$ and $\theta_{\Omega}^{n}$ are topologically conjugate. Since the entropy of $\xi_{\Omega}$ and $\theta_{\Omega}$ are given by $\lambda, \alpha$ respectively, we have $\lambda^{m} = \alpha^{n}$.
\end{proof}
\begin{remark}
For a substitution $\xi$, the element $[\susxi] \in \mathcal{M}(T)$ may or may not map to a generator of $\mathbb{Z}$ under the $R_{\mu}$. For example, for any substitution $\xi$ and $p \in \mathbb{N}$, let $\xi^{p}$ denote the substitution obtained by composing $\xi$ with itself $p$ times. Then $\xi$ itself gives a flow code defining a map $\susxi \in \homeo \Sigma_{T_{\xi^{p}}}X_{\xi^{P}}$, and the element $[\susxi] \in \mathcal{M}(T_{\xi^{p}})$ satisfies $(R_{\mu}([\susxi])^{p} = R_{\mu}([\tilde{\xi^{p}}])$.
\end{remark}

We now prove Theorem \ref{thm:splitting}.

\begin{proof}[Proof of Theorem \ref{thm:splitting}] Let $[f] \in \ker R_{\mu}$ and consider $f^{\prime} = (h_{\Omega})_{*}(f)$ in $\homeo \Omega_{\xi}$. As in the proof of Lemma \ref{lemma:amongferns}, the sequence $g_{m_{k}}^{\prime} = \xi_{\Omega}^{-jm_{k}}f^{\prime}\xi_{\Omega}^{jm_{k}}$ converges in $\homeo \Omega_{\xi}$ to a homeomorphism $f^{\prime}_{\infty}$ for which $\alpha_{f_{\infty}^{\prime}}(x,t) = t$ for all $x \in \Omega_{\xi}, t \in \mathbb{R}$.\\
\indent Since $\xi$ is of type CR, there exists $c > 0$ and a conjugacy $h_{CR} \colon \Omega_{\xi} \to \Sigma_{T}^{r_{c}}X $ inducing an isomorphism of topological groups $(h_{CR})_{*} \colon \homeo \Omega_{\xi} \to \homeo \Sigma_{T}^{r_{c}}X  $. Let $f_{a} = (h_{CR})_{*}(f_{\infty}^{\prime})$.\\
\indent Since $h_{CR}$ is a conjugacy, $h_{CR}(z+s) = h_{CR}(z)+s$ for all $z \in \Omega_{\xi}, s \in \mathbb{R}$. Thus $f_{a}$ is isotopic to a map preserving the cross section $X \times \{0\}$ in $\Sigma_{T}^{r_{c}}X$, and an argument analogous to the one given in Proposition \ref{prop:conjisoauto} shows there exists an automorphism $\phi \in \Aut(T)$ such that $f_{a}$ is isotopic to the map $\tilde{\phi} \in \homeo \Sigma_{T}^{r_{c}}X$ defined by
\begin{gather*}
\tilde{\phi} \colon (x,s) \longmapsto (\phi(x),s).
\end{gather*}
Let $\ell$ be an asymptotic leaf in $\Sigma_{T}X$. We claim $f^{-1}\Psi(\phi)$ in $\homeo \SuspT$ fixes $\ell$. To see this, note that since $\xi_{\Omega}^{j}$ fixes every asymptotic leaf in $\Omega_{\xi}$, we have $g_{m_{k}}^{\prime}(h_{\Omega}(\ell)) = h_{\Omega}(f(\ell))$ for all $m_{k}$, and hence $f_{\infty}^{\prime}(h_{\Omega}(\ell)) = h_{\Omega}(f(\ell))$. Likewise, $f_{a}\big(h_{CR}h_{\Omega}(\ell)\big) = \big(h_{CR}h_{\Omega}\big)(f(\ell))$. By condition \eqref{eqn:CRleaves}, this implies $f_{a}(h_{c}(\ell)) = h_{c}(f(\ell))$, and since $f_{a}$ and $\tilde{\phi}$ are isotopic, we have $\tilde{\phi}(h_{c}(\ell)) = h_{c}(f(\ell))$. One can now check that $\tilde{\phi}(h_{c}(\ell)) = h_{c}(\Psi(\phi)(\ell))$, giving $f(\ell) = \Psi(\phi)(\ell)$, so $f^{-1}\Psi(\phi)$ fixes $\ell$ as desired.\\
\indent Now to finish the proof, since both $f$ and $\Psi(\phi)$ are in $\ker R_{\mu}$, $f^{-1}\Psi(\phi) \in \ker R_{\mu}$. By the above, $f^{-1}\Psi(\phi)$ also fixes an asymptotic leaf, so by Lemma \ref{lemma:amongferns}, $f^{-1}\Psi(\phi)$ is isotopic to the identity, and $[f] = [\Psi(\phi)]$ in $\mathcal{M}(T)$.
\end{proof}


\section{The mapping class group of a subshift of linear complexity}
For a subshift $(X,T)$, let $P_{X}(n)$ denote the number of admissible words in $X$ of length $n$. We consider in this section the mapping class group associated to subshifts whose complexity function grows linearly. We note that this is a more general setting than the primitive substitution subshifts of the previous section; while substitution subshifts have linear growth of complexity, the collection of subshifts of linear complexity is a significantly larger class. On the other hand, to say something we must now assume a vanishing condition on the infinitesimals. The main result is the following.
\begin{theorem}\label{thm:virtuallyabelian}
Let $(X,T)$ be a subshift satisfying
\begin{equation*}
\liminf\limits_{n} \frac{P_{X}(n)}{n} < \infty.
\end{equation*}
If $\Inf \mathcal{G}_{T} = 0$, then $\mathcal{M}(T)$ is virtually abelian.
\end{theorem}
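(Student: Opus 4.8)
The strategy is to analyze $\mathcal{M}(T)$ through its action on positive homomorphisms. Since $\liminf_{n}P_{X}(n)/n<\infty$, a theorem of Boshernitzan gives that the (minimal) subshift $(X,T)$ has only finitely many, say $m$, ergodic invariant probability measures $\mu_{1},\dots,\mu_{m}$, so that, as in Section \ref{section:states}, $p(T)$ is identified with the positive orthant $\mathbb{R}_{\ge 0}^{m}$. Every $\alpha\in\Aut(\mathcal{G}_{T},\mathcal{G}_{T}^{+})$ induces a monoid automorphism $p(\alpha)$ of $p(T)$, which must permute the $m$ extreme rays of the orthant while scaling along them; hence $L_{T}(\mathcal{M}(T))$ is contained in the group $M_{m}^{+}$ of positive monomial $m\times m$ matrices, which is virtually abelian, the diagonal torus $(\mathbb{R}_{>0})^{m}$ being abelian of index at most $m!$. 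Moreover $\ker L_{T}$ is finite: if $f_{*}=\id$ on $p(T)$ then $\tau(f^{*}(v)-v)=0$ for every state $\tau$ and every $v\in\mathcal{G}_{T}$, so $f^{*}(v)-v\in\Inf\mathcal{G}_{T}=0$ and $f^{*}=\id$; thus $[f]\in\ker\pi_{T}$, which by Theorem \ref{thm:coinvariantskernel} lies in $\Im\Psi$, so $\ker L_{T}$ is isomorphic to a subgroup of $\Aut(T)/\langle T\rangle$. For a minimal subshift with $\liminf_{n}P_{X}(n)/n<\infty$ this last group is finite by \cite{DDMPlowcomp}, so $\ker L_{T}$ is finite.

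Next I pass to the finite-index subgroup $\mathcal{M}_{0}=L_{T}^{-1}\big(L_{T}(\mathcal{M}(T))\cap(\mathbb{R}_{>0})^{m}\big)$ of $\mathcal{M}(T)$, and write $K=\ker L_{T}$ (finite) and $A=L_{T}(\mathcal{M}_{0})$, a torsion-free abelian subgroup of $(\mathbb{R}_{>0})^{m}$, so that $1\to K\to\mathcal{M}_{0}\to A\to 1$. The proof then reduces to two points: (i) $A$ has finite rank; and (ii) any extension of a finite-rank torsion-free abelian group by a finite group is virtually abelian. Point (ii) is elementary: replacing $\mathcal{M}_{0}$ by the centralizer $C_{\mathcal{M}_{0}}(K)$ (still of finite index) makes $K$ act trivially, the commutator descends to an alternating bihomomorphism $c$ from a finite-index torsion-free subgroup $B\le A$ to (a subgroup of) $K$, and if $e=\exp K$ then $c$ vanishes on $eB\times B$ since $c(ex,y)=e\,c(x,y)=0$; because $B$ has finite rank $r$ the quotient $B/eB$ is finite (for each prime $p$ one has $\dim_{\mathbb{F}_{p}}(B/pB)\le r$, so each $B/p^{j}B$ is a finite $p$-group, and one applies the Chinese Remainder Theorem to $e$), whence the preimage of $eB$ is an abelian subgroup of finite index in $\mathcal{M}(T)$.

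The crux is (i), the finite rank of $A$. Via $\pi_{T}$, $A$ embeds into $\Aut(\mathcal{G}_{T},\mathcal{G}_{T}^{+})$; since $\Inf\mathcal{G}_{T}=0$ and there are only finitely many ergodic measures, the map $\iota\colon\mathcal{G}_{T}\hookrightarrow\mathbb{R}^{m}$, $[\gamma]\mapsto(\tau_{\mu_{1}}([\gamma]),\dots,\tau_{\mu_{m}}([\gamma]))$, is injective and identifies each element of $A$ with a positive diagonal matrix $D$ preserving the subgroup $\Lambda=\iota(\mathcal{G}_{T})$; since $\iota([1])=\mathbf{1}$ has all coordinates nonzero, the only such $D$ restricting to the identity on $\Lambda$ is $D=I$, so restriction to the $\mathbb{Q}$-span of $\Lambda$ gives a faithful embedding of $A$ into $GL_{k}(\mathbb{Q})$, where $k=\rank\mathcal{G}_{T}$. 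It thus suffices to know that $\mathcal{G}_{T}$ has finite rank and that the resulting abelian group of lattice-preserving positive diagonal matrices is finitely generated; both should follow from the structural fact that minimal subshifts with $\liminf_{n}P_{X}(n)/n<\infty$ have finite topological rank (bounding $\rank\mathcal{G}_{T}$), together with the rigidity supplied by $\Inf\mathcal{G}_{T}=0$ — in the presence of which $\Aut(\mathcal{G}_{T},\mathcal{G}_{T}^{+})$ is finitely generated by a Mal'cev-type argument for abelian subgroups of $GL_{k}$ fixing a lattice (or, when $\mathcal{G}_{T}$ is a localization $S^{-1}\mathbb{Z}^{k}$ at a finite set $S$ of primes, by the $S$-unit theorem exactly as in the proof of Theorem \ref{thm:substhm1}). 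I expect verifying (i) — pinning down precisely why linear complexity together with the vanishing of infinitesimals forces $\Im\pi_{T}$ to have finite rank — to be the main obstacle; granting it, (ii) completes the proof.
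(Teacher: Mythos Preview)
Your argument has a genuine gap at what you yourself flag as the crux, point (i). You reduce to an extension $1\to K\to \mathcal{M}_{0}\to A\to 1$ with $K$ finite and $A$ a subgroup of the diagonal torus $(\mathbb{R}_{>0})^{m}$, and then want $A$ to have finite rank. You do not prove this, and your sketched approaches are speculative: linear complexity does not obviously force $\mathcal{G}_{T}$ to be a localization $S^{-1}\mathbb{Z}^{k}$, and an abelian subgroup of $GL_{k}(\mathbb{Q})$ can have infinite rank (already $GL_{1}(\mathbb{Q})=\mathbb{Q}^{*}$ does). Without (i), step (ii) fails: finite-by-abelian does \emph{not} imply virtually abelian. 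A counterexample is the ``infinite Heisenberg'' group with central $\mathbb{Z}/2$, generated by $x_{i},y_{i}$ and a central $z$ of order two with $[x_{i},y_{j}]=z^{\delta_{ij}}$: it is a central extension of $\bigoplus_{i}\mathbb{Z}^{2}$ by $\mathbb{Z}/2$, but any finite-index subgroup contains some $m\cdot(\bigoplus\mathbb{Z}^{2})$ in its image, and a short symplectic-codimension count in $\mathbb{F}_{2}^{2n}$ shows no finite-codimension subspace can be isotropic, so no finite-index subgroup is abelian.

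The paper bypasses (i) entirely, and this is the missing idea. Instead of only using that $\Aut(T)/\langle T\rangle$ is finite, the paper uses the finiteness of asymptotic \emph{leaves} directly at the level of $\mathcal{M}(T)$: the kernel $F(T)$ of the permutation action $\pi_{as}\colon\mathcal{M}(T)\to P(\asy)$ has finite index (Lemma \ref{lemma:sleepyfern}). The point is that on $F(T)$ the state representation $\mathcal{L}_{T}$ is \emph{injective}, not merely of finite kernel. Indeed, if $[f]\in F(T)$ has $\mathcal{L}_{T}([f])=I$, then every state kills $f^{*}[1]-[1]$, so $f^{*}[1]=[1]$ by $\Inf\mathcal{G}_{T}=0$, hence $[f]\in\Im\Psi$ by Theorem \ref{thm:coinvariantskernel}; but $[f]$ also fixes an asymptotic leaf, so the corresponding automorphism fixes an orbit and is a power of $T$, giving $[f]=1$ (this is Lemma \ref{lemma:thewiseturtle}). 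Now $N=F(T)\cap\mathcal{L}_{T}^{-1}(D)$ is of finite index and embeds into the abelian group $D$, so $N$ is abelian. No finite-rank assertion about the image is needed.
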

\begin{remark}
In \cite[Theorem 3.1]{DDMPlowcomp} it was shown that when a subshift $(X,T)$ satisfies $\liminf\limits_{n} \frac{P_{X}(n)}{n} < \infty$, the group $\Aut(T) / \langle T \rangle$ is finite. The key to this result is Lemma \ref{lemma:sleepyfern} below (which is Lemma 3.2 in \cite{DDMPlowcomp}). Furthermore, the hypothesis is not trivial; see \cite[Ex 4.1]{DDMPlowcomp} for a subshift whose complexity function $P(n)$ satisfies $\liminf_n P(n)/n$ is finite but $\limsup_n P(n)/n$ is infinite.
\end{remark}
\begin{remark} \label{rmk:trivialinf}
Any uniquely ergodic Cantor minimal system $(X,T)$ for which $(\mathcal{G}_{T},\mathcal{G}_{T}^{+})$ is totally ordered satisfies $\Inf \mathcal{G}_{T} = 0$. In particular, Theorem \ref{thm:virtuallyabelian} applies to the collection of symbolic interval exchange transformations whose coinvariants are totally ordered. In \cite[Theorem 1]{Nikolaev1} it is shown that if $(H,H^{+},[u])$ is any simple and totally ordered dimension group which is free abelian of finite rank, then there exists an interval exchange transformation $(Y_{H},S_{H})$ whose natural symbolic cover has coinvariants which are isomorphic to $(H,H^{+},[u])$.
\end{remark}

Before beginning the proof of Theorem \ref{thm:virtuallyabelian}, let us make some comments about our approach. To summarize, there are two key finiteness results for subshifts with linear complexity which we make use of: namely, finitely many asymptotic leaves (Lemma \ref{lemma:sleepyfern} below) , and finitely many ergodic measures (Theorem \ref{thm:finergodic} below). Any $[f] \in \mathcal{M}(T)$ then induces some permutation on the finite set of asymptotic leaves and the finite set of rays in the state space corresponding to extremal traces (at the end of the section, we give an example of an automorphism of a minimal system permuting two ergodic measures, showing this action can be non-trivial). What we then seek is a lemma analogous to Lemma \ref{lemma:amongferns}.\\
\indent However, the presence of non-trivial infinitesimals provides meaningful obstructions to a result like Lemma \ref{lemma:amongferns} in general, and we must settle for Lemma \ref{lemma:thewiseturtle}. While substitution systems can have non-trivial infinitesimal subgroups (for example the Thue-Morse system), it is the substitution map (in particular, the self-similar version on the space $\Omega_{\xi}$) and the ironing out procedure (as employed in \cite{KwapiszRigidity}), that lets Lemma \ref{lemma:amongferns} avoid dealing with infinitesimals. (We remark that, while it is still possibly to manually smooth out the cocycle by repeatedly refining the domain of the flow code, this approach does not work for us; in particular, we do not invoke Lemma 6.1 of \cite{KwapiszRigidity}, as the author has confirmed with us in a personal communication that the proof of the Lemma there is incomplete.)\\

\indent We first establish some preliminaries.
\begin{theorem}[\cite{Boshernitzan1}]\label{thm:finergodic}
Suppose $(X,T)$ is a subshift and there exists $k \in \mathbb{N}$ satisfying
$$\liminf\limits_{n} \frac{P_{X}(n)}{n} < k.$$
Then $(X,T)$ has at most $k-1$ ergodic non-atomic probability measures.
\end{theorem}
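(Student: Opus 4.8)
This is Boshernitzan's theorem, so the plan is to recall a route to his argument. It suffices to prove the reformulation: \emph{if $(X,T)$ admits $k$ distinct ergodic non-atomic probability measures $\mu_1,\dots,\mu_k$, then $\liminf_n P_X(n)/n\ge k$} (the statement as given follows, since if $\liminf_n P_X(n)/n<k$ there cannot be $k$ such measures, and applying this for every finite $k$ not exceeding the number of ergodic non-atomic measures bounds that number by $\liminf_n P_X(n)/n$). Moreover it is enough to establish that $P_X(n+1)-P_X(n)\ge k$ for all sufficiently large $n$, since telescoping then forces $P_X(n)\ge kn-O(1)$.

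The natural framework is the sequence of Rauzy graphs $G_n$: the vertices are the admissible words of length $n$, and each admissible word of length $n+1$ contributes one directed edge from its length-$n$ prefix to its length-$n$ suffix, so $G_n$ has $P_X(n)$ vertices and $P_X(n+1)$ edges. An invariant probability measure is exactly a Kirchhoff-balanced assignment of nonnegative weights $e\mapsto\mu([e])$ to the edges with $\sum_e\mu([e])=1$, and because the $\mu_i$ are distinct and ergodic they determine $k$ affinely independent such flows. On a strongly connected directed graph the space of balanced flows has dimension $(\#\text{edges})-(\#\text{vertices})+1$; passing to the recurrent part of $G_n$ (which carries all non-atomic invariant mass), one obtains a bound of the shape $k-1\le\bigl(P_X(n+1)-P_X(n)\bigr)+c_n-1$, i.e. $P_X(n+1)-P_X(n)\ge k-c_n$ with $c_n$ essentially the number of recurrent components. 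A parallel, equivalent combinatorial setup works directly with words: mutual singularity of distinct ergodic measures lets one attach to each $\mu_i$ a family $W_i(n)$ of length-$n$ words carrying more than $1-\epsilon$ of $\mu_i$ and less than $\epsilon$ of every $\mu_j$ with $j\ne i$ (via the martingale theorem for the cylinder filtration), so the $W_i(n)$ are disjoint and $P_X(n)\ge\sum_i\#W_i(n)$.

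The difficulty — and the main obstacle — is that both naive counts lose an additive (respectively multiplicative) constant: the Rauzy estimate only yields $\liminf_n P_X(n)/n\ge k-O(1)$ because of the term $c_n$ and the vertex/edge discrepancy, and the word count only yields $\#W_i(n)\gtrsim n/C$ with $C$ an a priori uncontrolled recurrence constant. Closing this gap is the heart of Boshernitzan's argument: one uses non-atomicity of the $\mu_i$ to discard flows (resp. mass) concentrated on short cycles, and exploits the compatibility between consecutive levels — the level-$n$ measure is the truncation of the level-$(n+1)$ measure, which imposes extra linear constraints beyond bare Kirchhoff balance — together with a careful accounting of right-special words, to sharpen the dimension estimate to $P_X(n+1)-P_X(n)\ge k$ for all large $n$. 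This quantitative, uniform-in-$n$ improvement is exactly what is imported from \cite{Boshernitzan1}, and is the step I would expect to require the most care; everything else is bookkeeping around it.
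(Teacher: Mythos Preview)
The paper does not prove this statement at all: Theorem~\ref{thm:finergodic} is quoted from \cite{Boshernitzan1} and used as a black box, with no argument given. So there is no ``paper's own proof'' to compare your proposal against.

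As for your sketch itself: it is a reasonable outline of one route toward Boshernitzan's bound, but it is not a proof. You correctly identify the contrapositive reformulation and the Rauzy-graph/flow heuristic, and you are honest that the crucial sharpening --- getting from the naive estimate (which loses an additive constant from the number of recurrent components, or a multiplicative constant from the recurrence time) to the exact inequality $P_X(n+1)-P_X(n)\ge k$ for all large $n$ --- is ``exactly what is imported from \cite{Boshernitzan1}''. That step is the entire content of the theorem, and you have not supplied it; what remains in your write-up is scaffolding around a citation. In particular, the assertion that it suffices to show $P_X(n+1)-P_X(n)\ge k$ eventually is correct as a sufficient condition, but you give no argument that this actually holds (and Boshernitzan's own proof does not proceed by establishing this first-difference bound directly; it is more measure-theoretic). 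If you want a self-contained proof you must either reproduce Boshernitzan's argument or carry out the Rauzy-graph dimension count carefully enough to eliminate the slack, and neither is done here.
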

If $(X,T)$ is a Cantor system, by an asymptotic orbit we mean the orbit of a point $x$ for which there exists some $y \ne x$ such that $x$ and $y$ are asymptotic in $(X,T)$. Asymptotic orbits in $(X,T)$ correspond bijectively to asymptotic leaves in $\Sigma_{T}X$.
\begin{lemma}[Lemma 3.2 in \cite{DDMPlowcomp}]\label{lemma:sleepyfern}
If $(X,T)$ is a subshift for which
$$\liminf\limits_{n} \frac{P_{X}(n)}{n} < \infty$$
then the number of asymptotic orbits in $(X,T)$, and hence the number of asymptotic leaves in $\Sigma_{T}X$, is finite.
\end{lemma}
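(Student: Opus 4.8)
The plan is to convert forward-asymptotic pairs into left-special words that persist at every length, and then invoke the linear-complexity hypothesis to bound how many such pairs can occur. Since, as recalled above, asymptotic orbits of $(X,T)$ correspond bijectively to asymptotic leaves of $\Sigma_{T}X$, it suffices to bound the number of forward-asymptotic orbits. The first step is a normalization: if $x\neq y$ in $X$ are forward asymptotic they agree on a tail $[N,\infty)$, and setting $k=\max\{j\colon x_{j}\neq y_{j}\}$ the translates $T^{k+1}x$ and $T^{k+1}y$ agree on $[0,\infty)$ and differ at coordinate $-1$. I will call an unordered pair $\{a,b\}\subseteq X$ with $a_{[0,\infty)}=b_{[0,\infty)}$ and $a_{-1}\neq b_{-1}$ a \emph{branching pair}, and $\mathbf{w}=a_{[0,\infty)}$ its \emph{tail}. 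Every forward-asymptotic orbit meets some branching pair and each branching pair involves at most two orbits, so it is enough to prove that $X$ has only finitely many branching pairs. The point of the normalization is that every prefix $\mathbf{w}_{[0,n)}$ of the tail of a branching pair is left-special in the language of $X$, since it admits the two distinct left extensions $a_{-1}$ and $b_{-1}$.

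Next I would show there are only finitely many tails. Let $\mathcal{T}$ be the set of tails of branching pairs. If $\mathbf{w}\neq\mathbf{w}'$ lie in $\mathcal{T}$ they first differ at some coordinate, so for all sufficiently large $n$ the left-special words $\mathbf{w}_{[0,n)}$ and $\mathbf{w}'_{[0,n)}$ are distinct; since $P_{X}(n+1)-P_{X}(n)$ is at least the number of left-special words of length $n$, any $N$-element subset of $\mathcal{T}$ forces $P_{X}(n+1)-P_{X}(n)\ge N$ for all large $n$, hence $P_{X}(n)\ge Nn+O(1)$ and $\liminf_{n}P_{X}(n)/n\ge N$. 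As $\liminf_{n}P_{X}(n)/n<\infty$ by hypothesis, this bounds $N$, so $\mathcal{T}$ is finite (indeed $|\mathcal{T}|\le\liminf_{n}P_{X}(n)/n$).

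It then remains to bound the number of branching pairs with a fixed tail $\mathbf{w}\in\mathcal{T}$. Writing $L(\mathbf{w})=\{z\in X\colon z_{[0,\infty)}=\mathbf{w}\}$, which is closed, these pairs are exactly the $\{a,b\}\subseteq L(\mathbf{w})$ with $a_{-1}\neq b_{-1}$, so it is enough to show $L(\mathbf{w})$ is finite. If $L(\mathbf{w})$ were infinite it would have an accumulation point $z^{\ast}$ and points $z_{i}\to z^{\ast}$ in $L(\mathbf{w})\setminus\{z^{\ast}\}$; each $z_{i}$ agrees with $z^{\ast}$ on an ever longer window $[-m_{i},\infty)$ and differs at $-m_{i}-1$, so $\{T^{-m_{i}}z^{\ast},T^{-m_{i}}z_{i}\}$ is a branching pair with tail $(z^{\ast})_{[-m_{i},\infty)}$. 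Generically these tails are pairwise distinct, producing infinitely many elements of $\mathcal{T}$ and contradicting the previous step; hence $L(\mathbf{w})$ is finite, and therefore $X$ has finitely many branching pairs, finitely many asymptotic orbits, and finitely many asymptotic leaves.

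The one genuinely delicate point is the word ``generically'' in the last step: the tails $(z^{\ast})_{[-m_{i},\infty)}$ can fail to be distinct only if $z^{\ast}$ has an (eventually) periodic one-sided tail, i.e.\ only if $X$ contains a periodic point. For the minimal subshifts to which the lemma is applied this cannot happen, so the argument closes directly; to obtain the statement for a general subshift as in \cite{DDMPlowcomp} one argues separately that a periodic orbit contributes only boundedly to both $P_{X}(n)$ and to the set of asymptotic orbits, so the count still goes through. I expect this last case analysis, rather than the main line of argument, to be where the real care is needed.
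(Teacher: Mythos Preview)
The paper does not prove this lemma; it is quoted from \cite{DDMPlowcomp} without argument, so there is no proof here to compare against. For the infinite minimal subshifts to which the paper actually applies it, your argument is sound: the normalization to branching pairs, the bound $|\mathcal T|\le\liminf_n P_X(n)/n$ via left-special prefixes and the telescoping inequality $P_X(n+1)-P_X(n)\ge N$, and the compactness step showing each $L(\mathbf w)$ is finite all go through, the last because the obstruction you correctly isolate (periodicity of the accumulation point $z^{\ast}$) is ruled out by minimality.

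For the lemma as literally stated, however, your reduction has a real gap that a side case-analysis on periodic orbits does not repair. The intermediate quantity you bound --- the number of branching pairs --- can be infinite even when the conclusion holds. In $X=\{x\in\{0,1\}^{\mathbb Z}:x\text{ has at most one }1\}$ one has $P_X(n)=n+1$ and only two orbits, yet $L(0^{\infty})$ is infinite and the branching pairs $\{e_{-1},e_{-k}\}$ for $k\ge 2$ (where $e_j$ has its lone $1$ at coordinate $j$) are pairwise distinct. Thus the implication ``finitely many branching pairs $\Rightarrow$ finitely many asymptotic orbits'' is valid but its hypothesis fails here; covering the general case requires changing what is counted (for instance, bounding the number of \emph{orbits} meeting each $L(\mathbf w)$ rather than the number of pairs), and your closing remark does not yet supply this.
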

Recall $m(T)$ denotes the space of $T$-invariant finite Borel measures. By Theorem \ref{thm:finergodic} we may assume $(X,T)$ has $K$ ergodic probability measures  which we denote by $\{\mu_{i}\}_{i=1}^{K}$.\\
\indent Recall from Section \ref{section:states} there is a representation of $\mathcal{M}(T)$ on the space of positive homomorphisms
\begin{gather}
L_{T} \colon \mathcal{M}(T) \to \Aut(p(T))\\
\nonumber L_{T}([f]) = f_{*}
\end{gather}
Upon choosing an isomorphism between $\mathbb{R}^{K}$ and the linear space spanned by the collection $\{\mu_{i}\}_{i=1}^{K}$, the map $L_{T}$ gives rise to a matrix representation
$$\mathcal{L}_{T} \colon \mathcal{M}(T) \to GL_{K}(\mathbb{R}).$$
Moreover, for any $[f] \in \mathcal{M}(T)$, $L_{T}([f])$ must preserve the extremal elements of $p(T)$ and hence preserves the set of positive half-lines spanned by the $\tau_{\mu_{i}}$'s. It follows that the map $\mathcal{L}_{T}$ lands in the subgroup $GL_{K}^{perm}(\mathbb{R})$ of generalized permutation matrices, i.e. matrices which have exactly one non-zero entry in each row and column. \\
\indent Let $D \subset GL_{K}^{perm}(\mathbb{R})$ denote the subgroup of diagonal matrices. The subgroup $D$ is normal, and $GL_{K}^{perm}(\mathbb{R})$ fits in to an extension
$$1 \to D \to GL_{K}^{perm}(\mathbb{R}) \to S_{K} \to 1$$
where $S_{K}$ denotes the permutation group on $K$ symbols.\\
\indent Recall $\pi_{as} \colon \mathcal{M}(T) \to P(\asy)$ denotes the map defined in \eqref{eqn:asymrep} taking $\mathcal{M}(T)$ to the permutation group on asymptotic leaves. Let $F(T)$ denote the kernel of $\pi_{as}$. When $(X,T)$ has finitely many asymptotic equivalence classes, $F(T)$ is a finite index subgroup.
\begin{lemma}\label{lemma:thewiseturtle}
Let $(X,T)$ be a minimal Cantor system such that $\Inf \mathcal{G}_{T} = 0$. Suppose $f \in \homeo \SuspT$ fixes a leaf $l$, and satisfies $f_{*}\mu = \mu$ for all ergodic measures $\mu \in m(T)$. Then $f$ is isotopic to the identity.
\end{lemma}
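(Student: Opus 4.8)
The plan is to upgrade the two hypotheses into the single statement $f^{*}[1] = [1]$ in $\mathcal{G}_{T}$, apply Theorem \ref{thm:coinvariantskernel} to conclude $[f] \in \Im \Psi$, and then use the leaf-fixing assumption to pin $[f]$ down to the trivial class.

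First I would establish $f^{*}[1] = [1]$. Unwinding the bijection \eqref{eqn:measurestostates} between $T$-invariant probability measures and states, the assumption $f_{*}\mu = \mu$ for every ergodic $\mu$ says that $f_{*}$ fixes each extremal state $\tau_{\mu}$; since $f_{*}$ acts affinely on the (Choquet) simplex $\mathcal{S}(T)$ whose extreme points are exactly the $\tau_{\mu}$, it follows that $f_{*}\tau = \tau$ for \emph{every} state $\tau$. Then for each state $\tau$ we have $\tau(f^{*}[1]) = (f_{*}\tau)([1]) = \tau([1])$, so $f^{*}[1] - [1] \in \Inf \mathcal{G}_{T}$, and the hypothesis $\Inf \mathcal{G}_{T} = 0$ forces $f^{*}[1] = [1]$. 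This is exactly the several-ergodic-measures analogue of the condition $[f] \in \ker R_{\mu}$ used in Lemma \ref{lemma:amongferns}, and it is the one place where the no-infinitesimals hypothesis is invoked.

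By Theorem \ref{thm:coinvariantskernel}, $f^{*}[1] = [1]$ implies $[f] \in \Im \Psi$, so $f$ is isotopic to $\Psi(\phi)$ for some $\phi \in \Aut(T)$, where $\Psi(\phi)([x,t]) = [\phi(x),t]$. Now I bring in the leaf-fixing assumption: isotopic self-flow equivalences induce the same permutation of the set of leaves, so $\Psi(\phi)$ also fixes $\ell$. Writing $\ell$ as the leaf through $(x,0)$, the identity $\Psi(\phi)(x,0) = (\phi(x),0)$ shows $\phi(x)$ lies in the $T$-orbit of $x$, say $\phi(x) = T^{k}x$. Then $T^{-k}\phi \in \Aut(T)$ fixes $x$, hence fixes its dense $T$-orbit, hence is the identity; so $\phi = T^{k}$. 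Since $\ker \Psi = \langle T \rangle$ by Proposition \ref{prop:autembedding}, $\Psi(\phi) = \Psi(T^{k})$ is isotopic to the identity, and therefore so is $f$.

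The argument is short once these inputs are assembled, and I do not anticipate a serious obstacle; the step demanding the most care is the first one — making precise the sense in which ``$f_{*}$ fixes each ergodic measure'' translates into a statement about the action of $f^{*}$ on the order unit $[1]$, which is exactly where one needs the state--measure correspondence together with the vanishing of infinitesimals. Everything after that is formal: Theorem \ref{thm:coinvariantskernel}, the elementary rigidity of automorphisms of minimal Cantor systems (an automorphism fixing one orbit is a power of the shift), and the identification $\ker \Psi = \langle T \rangle$.
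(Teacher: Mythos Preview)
Your proof is correct and follows essentially the same route as the paper: deduce $f^{*}[1]-[1]\in\Inf\mathcal{G}_{T}=0$ from the measure-fixing hypothesis via the state--measure correspondence, invoke Theorem \ref{thm:coinvariantskernel} to get $[f]\in\Im\Psi$, and then use the fixed leaf to force the automorphism to be a power of $T$. The paper compresses the final step into a single sentence, whereas you spell out the rigidity argument (an automorphism of a minimal system fixing an orbit lies in $\langle T\rangle$) explicitly, but the logic is identical.
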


\begin{proof}
Since $f_{*}\mu = \mu$ for all ergodic measures $\mu$, it follows that $\tau_{\mu}(f^{*}[1]) = f^{*}([1])$ for all extremal states $\tau_{\mu}$, and hence $\tau(f^{*}([1])) = f^{*}([1])$ for all states $\tau \in \mathcal{S}(T)$. This implies $f^{*}([1]) - [1] \in \textnormal{Inf}\mathcal{G}_{T}$, and hence $f^{*}([1]) = [1]$ since we are assuming $\textnormal{Inf}\mathcal{G}_{T} = 0$. Then Theorem \ref{thm:coinvariantskernel} implies $[f] \in \Im \Psi$. Since $f$ fixes a leaf, this implies $f$ is isotopic to the identity.
\end{proof}
\begin{proof}[Proof of Theorem \ref{thm:virtuallyabelian}]
Let $N = \mathcal{L}_T^{-1}(D) \cap F(T)$. We first show that $N$ is a finite index normal subgroup of $F(T)$. Consider the map $Q$ defined to be the composition map
$$F(T)  \xrightarrow{\mathcal{L}_T} GL_{K}^{perm}(\mathbb{R}) \to S_{K}.$$
Since $D = \textnormal{ker}(GL_{K}^{perm}(\mathbb{R}) \to S_{K})$ and $N \subset \mathcal{L}_T^{-1}(D)$, we have $N \subset \textnormal{ker}Q$. By Lemma \ref{lemma:thewiseturtle}, $\mathcal{L}_T|_{F(T)}$ is injective, and it follows that if $a \in \textnormal{ker}Q$, then $\mathcal{L}_T(a) \in D$.
Thus $\textnormal{ker}Q = N$, and since $Q$ maps to a finite group, $N$ is finite index.\\
\indent Since $N$ is finite index in $F(T)$ and $F(T)$ is finite index in $\mathcal{M}(T)$, $N$ is finite index in $\mathcal{M}(T)$.\\
\indent Finally, since $\mathcal{L}_T|_{N}$ is injective and its image lies in $D$, $N$ must also be abelian.

\end{proof}

\begin{example}[Thue-Morse]
Consider the Thue-Morse substitution given by
\begin{equation*}
\mathrel{\raisebox{0.7ex}{$\xi \colon $}}
\begin{aligned}
1 \mapsto 12\\
2 \mapsto 21
\end{aligned}
\end{equation*}

By \cite{CovenSub}, $\Aut(\sigma_{\xi})$ is isomorphic to $\mathbb{Z}/2 \mathbb{Z} \times \mathbb{Z}$, where $\mathbb{Z}/2\mathbb{Z}$ is generated by the involution $\iota$ which permutes the letters and $\mathbb{Z}$ is generated by $\sigma_{\xi}$. Since $\xi$ is of type CR (see Definition \ref{def:typeCR}) by Theorem \ref{thm:splitting} we have
$$\mathcal{M}(\sigma_{\xi}) \cong \mathbb{Z}/2 \mathbb{Z} \rtimes \mathbb{Z}.$$
In this case the involution $\iota$ commutes with the substitution, so in fact
\begin{equation*}
\mathcal{M}(\sigma_{\xi}) \cong \mathbb{Z}/2 \mathbb{Z} \times \mathbb{Z}.
\end{equation*}
We note that, while $\Aut(\sigma_{\xi})$ and $\mathcal{M}(\sigma_{\xi})$ are abstractly isomorphic in this example, the generators for the $\mathbb{Z}$ components are very different:
\begin{equation*}
\begin{gathered}
\Aut(\sigma_{\xi}) \cong \mathbb{Z}/2\mathbb{Z} \times \mathbb{Z} = \langle \iota \rangle \times \langle \sigma_{\xi} \rangle\\
\mathcal{M}(\sigma_{\xi}) \cong \mathbb{Z}/2\mathbb{Z} \times \mathbb{Z} = \langle \Psi(\iota) \rangle \times \langle \tilde{\xi} \rangle.
\end{gathered}
\end{equation*}

We also remark that the involution $\Psi(\iota) \in \mathcal{M}(\sigma_{\xi})$ acts trivially on $(\mathcal{G}_\sigma, \mathcal{G}_\sigma^+)$, as can be checked directly.
\end{example}
The following example shows that $\Aut(T)$, and hence $\mathcal{M}(T)$, can permute the ergodic measures of $(X,T)$. The construction is based on techniques from \cite{KraGeneric}.
\begin{example}\label{example:automeasurepermute}
We construct a minimal subshift of linear complexity and an automorphism which permutes two ergodic measures. Define base words
$$w_0^1 = \underbrace{0 \cdots 0}_{N_1}1,\hspace{.17in}  w_1^1 = \underbrace{1 \cdots 1}_{N_1} 0.$$
Note that $w_0^1$ is the image of $w_1^1$ under the involution $0 \leftrightarrow 1$.
We define the second level words
$$w_0^2 = \underbrace{w_0^0 \cdots w_0^0}_{N_2}w_1^0, \hspace{.17in} w_1^2 = \underbrace{w_1^1 \cdots w_{1}^{1}}_{N_2}w_0^1.$$
and recursively define $(i+1)$st level words by
$$w_0^{i+1} = \underbrace{w_0^i \cdots w_0^i}_{N_{i+1}}w_1^i, \hspace{.17in}  w_1^{i+1} = \underbrace{w_1^i \cdots w_1^i}_{N_{i+1}}w_0^i.$$
At each stage, note that $w_0^i$ is image of $w_1^i$ under the involution $0 \leftrightarrow 1$.
Consider the infinite word $w = \lim_{i \to \infty} w_0^i$. By construction, $w$ is uniformly recurrent, so the subshift $(\overline{\mathcal{O}(w)},\sigma)$ is minimal. Since the involution of $w_0^i$ is contained in $w_0^{i+1}$, the involution of $w$ is in the orbit closure of $w$. Thus, the 0-block code which permutes 0 and 1 induces an automorphism $\pi \in \mathcal{O}(w)$.
The frequency of 0s which appear in $w_0^i$ is greater than $\dfrac{N_1}{N_1+1} \cdots \dfrac{N_i}{N_i+1}$. We can choose $\{N_i\}$ to grow sufficiently quickly (for example geometrically) to ensure that the frequency of 0s in each $w_0^i$ is greater than $1/2$ and that the complexity is linear. A standard construction then gives an ergodic measure $\mu_0$ such that $\mu_0[0] > 1/2$ (see \cite{KraGeneric}). Similarly, by considering $w_1^i$, we get a distinct ergodic measure $\mu_1$ with $\mu_1([1]) > 1/2$.
Since the automorphism $\pi$ maps the cylinder set $[0]$ to $[1]$, it must send $\mu_0$ to $\mu_1$.
\end{example}

\section{$\mathcal{M}(T)$ and the Picard group of $C(X) \rtimes_{T} \mathbb{Z}$} \label{sec:picard}
For a Cantor system $(X,T)$ we will consider the crossed product $C^{*}$-algebra $\mathcal{A} = C(X) \rtimes_{T} \mathbb{Z}$ (see \cite{Putnamcpalgebras} for definitions and more regarding $\mathcal{A}$). For any $C^{*}$-algebra $\mathcal{B}$, the collection of equivalence classes of $\mathcal{B}$-$\mathcal{B}$-imprimitivity bi-modules forms a group under tensor product. This group, denoted by $\textnormal{Pic}\mathcal{B}$, is called the Picard group of $\mathcal{B}$, and was introduced by Brown, Green and Rieffel in \cite{BGRmorita}. The algebra $\mathcal{B}$ itself, considered as a $\mathcal{B}$-$\mathcal{B}$ bi-module, serves as the identity element in $\textnormal{Pic}\mathcal{B}$. For example, if one considers the $C^{*}$-algebra $C(Y)$ of continuous functions on $Y$ where $Y$ is a compact Hausdorff space, then $\textnormal{Pic}C(Y)$ is isomorphic to the semidirect product of the multiplicative group of line bundles over $Y$ with the group of homeomorphisms of $Y$. We refer the reader to \cite[Section 3]{BGRmorita} for definitions and background regarding $\textnormal{Pic} \mathcal{B}$. \\
\indent The goal of this section is to construct, for a Cantor system $(X,T)$, a homomorphism $\Theta \colon \mathcal{M}(T) \to \textnormal{Pic}\mathcal{A}$. We then show that when $(X,T)$ is minimal, $\Theta$ is injective. \\
\indent The group $\textnormal{Pic}\mathcal{A}$ has been studied for crossed product algebras arising from various systems: for irrational rotations of the circle in \cite{Kodaka97}, and for uniquely ergodic Cantor minimal systems in \cite{Nawata2012} (see also \cite{NW2010}, \cite{NW2011}, \cite{Nawata2012B}). By \cite[Prop. 3.1]{BGRmorita}, there is an injective homomorphism $\Aut\mathcal{A} / \textnormal{Inn}\mathcal{A} = \textnormal{Out}\mathcal{A} \to \textnormal{Pic}\mathcal{A}$, which is often not surjective. In some cases, e.g. substitutions, the image of the map $\Theta$ constructed here will contain classes not in the image of $\textnormal{Out}\mathcal{A}$ in $\textnormal{Pic}\mathcal{A}$. \\
\indent Before beginning, we first introduce some notation. For a clopen $C \subset X$, we let $\mathcal{A}_{C}$ denote the algebra $\chi_{C}\mathcal{A}\chi_{C}$. Note that $\mathcal{A}_{C} \cong C(C) \rtimes_{r_{C}}\mathbb{Z}$ (see \cite[Prop. 3.9]{GPS1}). We denote the $\mathcal{A}$-$\mathcal{A}_{C}$ bi-module $\mathcal{A}\chi_{C}$ by $P_{C}$. Given another clopen $D$ and an isomorphism $\mathcal{A}_{D} \xrightarrow{\alpha} \mathcal{A}_{C}$, let $\mathcal{A}_{D}^{\alpha}$ be the vector space $\mathcal{A}_{D}$, and equip $\mathcal{A}^{\alpha}_D$ with the structure of an $\mathcal{A}_D$-$\mathcal{A}_C$ bi-module defined as follows: the left action is the natural multiplication, right action is given by $x \cdot a = x \alpha^{-1}(a)$, where $x \in \mathcal{A}_D, a \in \mathcal{A}_C$, and the inner products are defined by
\begin{equation*}
_{\mathcal{A}_D}\langle x, y \rangle = xy^*, \langle x,y \rangle_{\mathcal{A}_C} = \alpha(x^{*}y).
\end{equation*}
\indent Given a flow code $(\phi,C,D)$ there is an induced isomorphism $\mathcal{A}_{D} \xrightarrow{\phi^{*}} \mathcal{A}_{C}$, and we define an $\mathcal{A}$-$\mathcal{A}$-bi-module
\begin{equation*}
X_{\phi} = P_{D} \otimes \mathcal{A}_{D}^{\phi^{*}} \otimes P_{C}^{-1}.
\end{equation*}
\indent Before defining $\Theta$, we record a useful lemma.
\begin{lemma}\label{lemma:flowcoderestriction}
Let $(\phi,C,D)$ be a flow code, $E \subset C$ a discrete cross section, and let $F = \phi(E)$. Then $(\phi|_{E},E,F)$ is a flow code.
\end{lemma}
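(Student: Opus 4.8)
The plan is to unwind the two defining properties of a flow code—that $E$ and $F=\phi(E)$ are discrete cross sections of the relevant systems, and that $\phi|_E$ is a conjugacy of the associated return systems—and verify each directly from the corresponding facts for $(\phi,C,D)$. First I would record that a discrete cross section of $(C,T_C)$ is automatically a discrete cross section of $(X,T)$: if $E\subset C$ is clopen with the property that every $T_C$-orbit meets $E$, then since every $T$-orbit meets $C$ and the $T_C$-orbit of each point of $C$ meets $E$, every $T$-orbit meets $E$; and $E$ is clopen in $X$ because $C$ is. (In the minimal case this is immediate, but the statement should be argued without assuming minimality, using only that $C$ is a discrete cross section of $(X,T)$.) The return time function $r_E\colon E\to\mathbb{N}$ with respect to $T$ is then finite-valued and continuous, so $E$ is a discrete cross section of $(X,T)$ and $(E,T_E)=(E,(T_C)_E)$.

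Next I would apply the same reasoning on the target side to conclude that $F=\phi(E)$, being the image of the discrete cross section $E$ of $(C,T_C)$ under the conjugacy $\phi\colon(C,T_C)\to(D,T_D)$, is a discrete cross section of $(D,T_D)$, hence of $(X',T')$ (or of $(X,T)$ in the $(X,X)$-flow code case); conjugacies carry discrete cross sections to discrete cross sections and intertwine the induced return maps, so $(F,T'_F)$ is well defined and $\phi$ restricts to a bijection $E\to F$ intertwining $T_E$ with $T'_F$. The final point is that $\phi|_E\colon(E,T_E)\to(F,T'_F)$ is a conjugacy: it is a homeomorphism onto $F$ since $\phi$ is a homeomorphism and $E$ is clopen, and it intertwines the return maps because $\phi$ conjugates $T_C$ to $T_D$ and the induced (return) maps of a conjugacy on a common clopen set are again conjugated. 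Assembling these, $(\phi|_E,E,F)$ satisfies the definition of a flow code.

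The only mild subtlety—and the step I would treat most carefully—is the transitivity of the "discrete cross section" relation: namely that a discrete cross section of a return system $(C,T_C)$ is again a discrete cross section of the ambient system $(X,T)$, with the return times composing correctly ($r_E^{(X,T)}(x)=\sum_{i=0}^{k-1}r_C(T_C^i x)$ where $k=r_E^{(C,T_C)}(x)$). This is a routine but slightly fiddly bookkeeping argument about orbit segments, and it is exactly what makes $(E,T_E)$ agree whether one inducts from $X$ directly or in two stages through $C$. Everything else is formal from the fact that conjugacies respect the operations of inducing to a clopen subset.
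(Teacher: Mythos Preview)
Your proof is correct and morally the same as the paper's, though organized differently. The paper proceeds by a single direct return-time computation: writing $T_E(x)=T_C^{k}(x)$ for the appropriate $k$, it uses $\phi T_C^{k}=T_D^{k}\phi$ and then checks by a two-sided inequality that $r_F(\phi(x))=\sum_{i=0}^{k-1}r_D(T_D^{i}\phi(x))$, which yields $T_F(\phi(x))=T_D^{k}(\phi(x))=\phi(T_E(x))$. You instead factor the argument into two reusable facts---transitivity of discrete cross sections (so that $(E,T_E)=(E,(T_C)_E)$ and likewise on the $D$ side) and the observation that a conjugacy $(C,T_C)\to(D,T_D)$ carries the operation of inducing to a clopen subset to the corresponding operation on the target---and then compose them. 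The underlying bookkeeping (your formula $r_E^{(X,T)}(x)=\sum_{i=0}^{k-1}r_C(T_C^{i}x)$) is exactly the paper's computation; your modular framing has the minor advantage of making explicit that $F$ is a discrete cross section of the ambient system, which the paper leaves implicit.
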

\begin{proof}
It suffices to show that $\phi T_{E}(x) = T_{F}\phi(x)$ for any $x \in E$. Since $E \subset C$, $T_{E}(x) = T_{C}^{k}(x)$ for some $k$. Then $\phi T_{E}(x) = \phi T_{C}^{k}(x) = T_{D}^{k}\phi(x)$, so it suffices to show that $r_{F}(\phi(x)) = \sum_{i=0}^{k-1}r_{D}T_{D}^{i}(\phi(x))$.\\
\indent Since $T_{C}^{k}(x) \in E$, we have $T_{D}^{k}\phi(x) = \phi T_{C}^{k}(x) \in F$, and hence $r_{F}(\phi(x)) \le \sum_{i=1}^{k-1}r_{D}(T_{D}^{i}\phi(x))$.\\
\indent For the other direction, if $T^{j}\phi(x) \in F$, then $T^{j}\phi(x) = T_{D}^{\ell}\phi(x)$ for some $\ell$, since $F \subset D$. Then $T^{j}\phi(x) = T_{D}^{\ell}\phi(x) = \phi T_{C}^{\ell}(x)$, and hence $T_{C}^{\ell}(x) \in E$, so $\ell \ge k$. It follows that $r_{F}\phi(x)  \ge \sum_{i=0}^{k-1}r_{D}T_{D}^{i}(x)$.
\end{proof}
\indent Towards defining $\Theta$, we will show that if $(\phi,C,D)$ is any flow code, then the following hold:
\begin{enumerate}
\item
If $S_{\phi}$ is isotopic to the identity, then $X_{\phi}$ is equivalent to $_{\mathcal{A}}{\mathcal{A}}_{\mathcal{A}}$.
\item
If $E \subset C$ is a discrete cross section, the restriction $(\phi|_{E},E,\phi(E))$ is a flow code, and $X_{\phi}$ is equivalent to $X_{\phi|_{E}}$.
\item
If $(\psi,B,C)$ is another flow code, then $X_{\phi} \otimes X_{\psi}$ is equivalent to $X_{\phi \psi}$.
\end{enumerate}
\emph{Proof of (1). } Suppose $(\phi,C,D)$ is a flow code for which $S_{\phi}$ is isotopic to the identity. By Proposition \ref{prop:leafaction}, there exists $\alpha \in C(C,\mathbb{Z})$ such that $\phi(x) = T^{\alpha(x)}x$ for all $x \in C$. Let $C_{i} = \alpha^{-1}(i)$, so $\{C_{i}\}$ provides a (finite) partition of $C$, and let $v = \sum_{i \in \mathbb{Z}} u^{i} \chi_{C_{i}} \in \mathcal{A}$. Note that $v$ is a partial isometry with $v^{*}v = \chi_{C}, vv^{*} = \chi_{D}$, for which the isomorphism $\phi^{*} \colon \mathcal{A}_{D} \to \mathcal{A}_{C}$ is given by $Ad_{v^{*}}(x) = v^{*}xv$. Consider the linear map
$$\Phi \colon P_{D} \otimes \mathcal{A}_{D}^{\phi^{*}} \to P_{C}$$
given by $\Phi(a \chi_{D} \otimes \chi_{D} b \chi_{D}) = a \chi_{D} b \chi_{D} v$. For any $c \in \mathcal{A}$, we have $\Phi(cv^{*}\chi_{D} \otimes \chi_{D}) = cv^{*} \chi_{D} v= c \chi_{C}$, so $\Phi$ is onto. By \cite[Remark 3.27]{RaeburnWilliamsbook}, to show $\Phi$ is an equivalence of $\mathcal{A}$-$\mathcal{A}_{C}$-bi-modules, it is enough then to show that $\Phi$ preserves the inner products, which is a straightforward calculation.\\
\emph{Proof of (2). } First note that, by Lemma \ref{lemma:flowcoderestriction}, $\phi|_{E} \colon E \to F$ is also a flow code. We want to show that there is an equivalence of bi-modules
\begin{equation}\label{eqn:bimodulesfor2}
P_{D} \otimes \mathcal{A}_{D}^{\phi^{*}} \otimes P_{C}^{-1} \cong P_{F} \otimes \mathcal{A}_{F}^{\phi|_{E}^{*}} \otimes P_{E}^{-1}.
\end{equation}
For any clopens $V \subset U$, we may consider $P_{V}$ as an $A_{U}$-$\mathcal{A}_{V}$-bi-module, and the map $a \chi_{U} \otimes b \chi_{V} \mapsto a \chi_{U} b \chi_{V}$ induces an equivalence of bi-modules $P_{U} \otimes( _{\mathcal{A}_{U}}{P_{V}}_{\mathcal{A}_{V}}) \cong P_{V}$. Applying this to $E \subset C, F \subset D$, it follows that $P_{C}^{-1} \otimes P_{E}$ is equivalent to $ _{\mathcal{A}_{C}}{P_{E}}_{\mathcal{A}_{E}}$, and $P_{D}^{-1} \otimes P_{F}$ is equivalent to $ _{\mathcal{A}_{D}}{P_{F}}_{\mathcal{A}_{F}}$. Thus to show \eqref{eqn:bimodulesfor2} it is enough to show that $\mathcal{A}_{D}^{\phi^{*}} \otimes P_{E}$ and $P_{F} \otimes A_{F}^{\phi|_{E}^{*}}$ are equivalent. Consider the map
$$\Phi \colon \mathcal{A}_{D}^{\phi^{*}} \otimes P_{E} \to P_{F} \otimes A_{F}^{\phi|_{E}^{*}}$$
$$\Phi \colon \chi_{D} a \chi_{D} \otimes \chi_{C}b\chi_{E} \mapsto \chi_{D}a\chi_{F} \otimes (\phi|_{E}^{*})^{-1}(\chi_{E}b\chi_{E}).$$
One can check that $\Phi$ is indeed a bi-module isomorphism.\\
\emph{Proof of (3). } First observe that $\mathcal{A}_{D}^{\phi^{*}} \otimes \mathcal{A}_{C}^{\psi^{*}} \cong \mathcal{A}_{D}^{\psi^{*} \phi^{*}}$; this can be checked directly (or see \cite[Section 3]{BGRmorita}). Then we have
\begin{equation*}
\begin{gathered}
X_{\phi} \otimes X_{\psi} = P_{D} \otimes \mathcal{A}_{D}^{\phi^{*}} \otimes P_{C}^{-1} \otimes P_{C} \otimes \mathcal{A}_{C}^{\psi^{*}} \otimes P_{B}^{-1} \cong P_{D} \otimes \mathcal{A}_{D}^{\phi^{*}} \otimes \mathcal{A}_{C}^{\psi^{*}} \otimes P_{B}^{-1}\\
\cong P_{D} \otimes \mathcal{A}_{D}^{\psi^{*} \phi^{*}} \otimes P_{B}^{-1} \cong P_{D} \otimes \mathcal{A}_{D}^{(\phi \psi)^{*}} \otimes P_{B}^{-1}\\
= X_{\phi \psi}.
\end{gathered}
\end{equation*}

Now consider the map
$$\Theta_{\mathcal{M}} \colon \mathcal{M}(T) \to \textnormal{Pic}\mathcal{A}$$
defined as follows: given $[f] \in \mathcal{M}(T)$, by Theorem \ref{thm:parrysullivan} there exists a flow code $(\phi,C,D)$ such that $[S_{\phi}] = [f]$, and we set $\Theta_{\mathcal{M}}([f]) = [X_{\phi}]^{-1}$ (the choice of the inverse is to make $\Theta$ a homomorphism instead of an antihomomorphism). We will show that $\Theta$ is a well-defined homomorphism using $(1), (2)$ and $(3)$ above. First, we record the following lemma from \cite{BCMCG}, which shows how to compose flow codes (up to isotopy).
\begin{lemma}\cite[Prop. A.3]{BCMCG}\label{lemma:lemmacons}
If $(\phi,C,D), (\psi,E,F)$ are a pair of flow codes, there exists clopen subsets $K \subset C, L \subset E$ and a flow code $\eta \colon \phi(K) \to L$ such that $\eta$ is isotopic to the identity and $S_{\psi} \circ S_{\phi}$ is isotopic to $S_{\psi|_{L} \circ \eta \circ \phi|_{\phi^{-1}(K)}}$.
\end{lemma}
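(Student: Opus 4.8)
\emph{Strategy.} The plan is to strip the isotopy‑invariant data off the two flow codes until their composition collapses onto a single composite flow code, using two elementary facts. First, a \emph{restriction principle}: for a flow code $(\phi,C,D)$ and a discrete cross section $K\subset C$, the maps $S_{\phi}$ and $S_{\phi|_{K}}$ are isotopic. Indeed, by Lemma~\ref{lemma:flowcoderestriction} the triple $(\phi|_{K},K,\phi(K))$ is a flow code; moreover $S_{\phi}$ and $S_{\phi|_{K}}$ agree on the transversal $\{(k,0):k\in K\}$ (both send $(k,0)\mapsto(\phi k,0)$), and both carry the $T_{K}$-return block of each $k$ onto the $T_{\phi(K)}$-return block of $\phi k$ fixing its endpoints (for $S_{\phi}$ the block of $k$ is a concatenation of $C$-blocks mapped linearly to $D$-blocks, which reassemble correctly again by Lemma~\ref{lemma:flowcoderestriction}). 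Hence $S_{\phi|_{K}}\circ S_{\phi}^{-1}$ fixes every leaf, displaces points within a leaf by at most $\max r_{\phi(K)}$, and has continuous displacement, so it is isotopic to the identity by the criterion of \cite[Thm.~3.1 and Prop.~3.1]{BCEisotopy} (together with \cite{BCEisotopycorrigendum}). Second, if two flow codes have \emph{matching} cross sections --- the output cross section of the first equals the input cross section of the second --- then the composite flow code induces exactly the composition of the two maps $S_{(-)}$; this is immediate from the definitions, since both sides agree on transversals and are piecewise linear along leaves.

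\emph{Reduction.} Granting these, it suffices to produce clopen sets $K\subset C$, $L\subset E$, and a flow code $\eta\colon\phi(K)\to L$ with $S_{\eta}$ isotopic to the identity. For then the restriction principle gives $S_{\psi}\circ S_{\phi}\simeq S_{\psi|_{L}}\circ S_{\phi}\simeq S_{\psi|_{L}}\circ S_{\phi|_{K}}$; inserting $S_{\eta}\simeq\id$ yields $S_{\psi|_{L}}\circ S_{\phi|_{K}}\simeq S_{\psi|_{L}}\circ S_{\eta}\circ S_{\phi|_{K}}$; and now the cross sections match ($\phi(K)$ is the output of $\phi|_{K}$ and the input of $\eta$, and $L$ is the output of $\eta$ and the input of $\psi|_{L}$), so the second fact identifies the right‑hand side with $S_{\psi|_{L}\circ\eta\circ\phi|_{K}}$, the desired composite flow code.

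\emph{The crux.} To build $K,L,\eta$ I would match the two tower structures along leaves. Work in the suspension and view a fixed leaf as $\mathbb{R}$ marked by its $D$-points and its $E$-points, and set $N_{D}=\max r_{D}$. Choose a small clopen $L_{0}\subset E$ so that every $L_{0}$-return block $[\ell_{j},\ell_{j+1})$ along every leaf contains at least one $D$-point; this is possible because shrinking a clopen set forces its return time up (in the aperiodic case, which is the one relevant here; the remaining care in full generality is as in \cite{BCMCG}). Let $G\subset D$ be the clopen set of leftmost $D$-points of the $L_{0}$-blocks; then $G$ is a discrete cross section, so $\phi^{-1}(G)\subset C$ is one too (pulling back along the conjugacy $\phi$), and with $K:=\phi^{-1}(G)$ the restriction $\phi|_{K}\colon K\to G=\phi(K)$ is a flow code. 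Put $L=L_{0}$ and define $\eta\colon\phi(K)\to L$ by sending the chosen $D$-point of each block to the left endpoint $\ell_{j}$ of that block. Since consecutive $G$-points lie in consecutive $L_{0}$-blocks, $\eta$ is a conjugacy $(G,T_{G})\to(L,T_{L})$ that moves each point backward within its $T$-orbit by less than $N_{D}$; the induced $S_{\eta}$ maps each $G$-block linearly onto the matching $L$-block, so its displacement interpolates linearly between the two bounded displacements at the endpoints. Thus $S_{\eta}(z)=z+\beta(z)$ with $\beta$ continuous and bounded, and the isotopy criterion again gives $S_{\eta}\simeq\id$, completing the crux.

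\emph{Main obstacle.} The heart of the argument is the crux: reconciling the a priori incompatible $D$- and $E$-tower structures by a single bounded orbit‑shift. It matters which side one refines --- one must coarsen the $E$-section relative to $D$, not conversely --- and one must rule out a $D$-block so long that it swallows an $E$-return block in a way that obstructs the pairing (and, in a non‑minimal setting, handle short periodic orbits). By contrast the restriction principle, the composition‑of‑flow‑codes identity, and the implication ``bounded continuous displacement $\Rightarrow$ isotopic to $\id$'' are all routine consequences of the definitions together with the isotopy theorem of \cite{BCEisotopy}.
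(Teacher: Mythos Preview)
The paper does not prove this lemma; it is quoted verbatim from \cite[Prop.~A.3]{BCMCG} and used as a black box in Section~\ref{sec:picard}, so there is no in‑paper argument to compare against.

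Your sketch is a sound reconstruction of the standard argument. The restriction principle and the ``matched cross sections compose on the nose'' fact are routine, and your construction of $\eta$ is correct: once $L_{0}\subset E$ has minimum return time at least $\max r_{D}$, the first‑hitting‑time of $D$ from $L_{0}$ is locally constant, so $G=\{T^{k(x)}x:x\in L_{0}\}$ is clopen, each $L_{0}$‑block contains exactly one $G$‑point, and the backward slide $G\to L_{0}$ is a conjugacy with displacement bounded by $\max r_{D}$, hence $S_{\eta}$ is isotopic to the identity by \cite[Prop.~3.1]{BCEisotopy}. The only genuine caveat --- which you correctly flag --- is that producing such an $L_{0}$ that is still a discrete cross section requires aperiodicity (or a separate treatment of periodic orbits); this is automatic in the minimal setting where the lemma is actually applied, and handled in full generality in \cite{BCMCG}. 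Note also that the paper's statement contains a typo: ``$\phi|_{\phi^{-1}(K)}$'' should read ``$\phi|_{K}$'', as your proof implicitly assumes.
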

Note that by $(3)$ and $(1)$ we have $[X_{\phi^{-1}}] = [X_{\phi}]^{-1}$. Now to show that $\Theta_{\mathcal{M}}$ is well-defined, if $(\phi_{1},C_{1},D_{1}), (\phi_{2},C_{2},D_{2})$ are flow codes for which $[S_{\phi_{1}}] = [S_{\phi_{2}}]$, then $S_{\phi_{2}^{-1}} \circ S_{\phi_{1}}$ is isotopic to $S_{\phi_{2}}^{-1} \circ S_{\phi_{1}}$ which is isotopic to the identity. Applying Lemma \ref{lemma:lemmacons} gives $\eta, K, L$ for which $S_{\phi_{2}^{-1}|_{L} \circ \eta \circ \phi_{1}|_{\phi^{-1}(K)}}$ is isotopic to the identity. Then using $(1), (2), $ and $(3)$ above, in $\textnormal{Pic}\mathcal{A}$ we have $1 = [X_{\phi_{2}^{-1}} \otimes X_{\phi_{1}}] = [X_{\phi_{2}}]^{-1}[X_{\phi_{1}}]$, so $[X_{\phi_{1}}] = [X_{\phi_{2}}]$. That $\Theta_{\mathcal{M}}$ is a homomorphism also follows similarly using $(1), (2)$ and $(3)$.
\begin{proposition}\label{prop:picinjectivity}
If $(X,T)$ is a minimal Cantor system then $\Theta_{\mathcal{M}} \colon \mathcal{M}(T) \to \textnormal{Pic}\mathcal{A}$ is injective.
\end{proposition}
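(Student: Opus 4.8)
The plan is to show $\ker \Theta_{\mathcal{M}}$ is trivial. Let $[f] \in \ker \Theta_{\mathcal{M}}$ and fix a flow code $(\phi,C,D)$ representing $[f]$, so that $X_{\phi}$ is equivalent to $_{\mathcal{A}}\mathcal{A}_{\mathcal{A}}$. The first step is to pass this information into the coinvariants picture. For a minimal Cantor system $(X,T)$ the group $K_{0}(\mathcal{A})$ is naturally isomorphic, as an ordered group with order unit, to $(\mathcal{G}_{T},\mathcal{G}_{T}^{+},[1])$ (via the Pimsner--Voiculescu sequence, using that $X$ is totally disconnected; cf. \cite{GPS1,Putnamcpalgebras}). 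Moreover, the action of $\textnormal{Pic}\mathcal{A}$ on $K_{0}(\mathcal{A})$ by Morita transport, pulled back along $\Theta_{\mathcal{M}}$ and transported to $\mathcal{G}_{T}$, recovers the coinvariants representation $\pi_{T}$ of \eqref{eqn:dimrep} up to inversion: indeed, the three tensor factors $P_{D}$, $\mathcal{A}_{D}^{\phi^{*}}$, $P_{C}^{-1}$ defining $X_{\phi}$ induce on $K_{0}$ precisely the three maps $res_{D}$, $S_{\phi}^{*}$, $res_{C}^{-1}$ of \eqref{eqn:moritaiso}. Since $X_{\phi}$ is trivial, $\pi_{T}([f]) = \id$; in particular $f^{*}[1] = [1]$, so by Theorem \ref{thm:coinvariantskernel} there is some $\varphi \in \Aut(T)$ with $[f] = [\Psi(\varphi)]$.

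Next I would recompute $\Theta_{\mathcal{M}}([f])$ using the flow code $(\varphi,X,X)$, which also represents $[\Psi(\varphi)] = [f]$; since $\Theta_{\mathcal{M}}$ is well-defined this yields the same Picard class. For this flow code $C = D = X$, so $\mathcal{A}_{X} = \mathcal{A}$ and $P_{X} = {}_{\mathcal{A}}\mathcal{A}_{\mathcal{A}}$, whence $X_{\varphi} \cong \mathcal{A}^{\varphi^{*}}$, the bi-module obtained by twisting the right action of $\mathcal{A}$ by the automorphism $\widehat{\varphi} \in \Aut\mathcal{A}$ induced by $\varphi$ (normalized so that $\widehat{\varphi}$ fixes the canonical unitary $u$ and acts on $C(X)$ by $g \mapsto g \circ \varphi^{-1}$). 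Thus $[\mathcal{A}^{\varphi^{*}}] = \Theta_{\mathcal{M}}([f])^{-1} = 1$ in $\textnormal{Pic}\mathcal{A}$, and by the Brown--Green--Rieffel embedding $\textnormal{Out}\mathcal{A} \hookrightarrow \textnormal{Pic}\mathcal{A}$ of \cite[Prop. 3.1]{BGRmorita}, the automorphism $\widehat{\varphi}$ is inner.

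It remains to show that an inner $\widehat{\varphi}$ forces $\varphi \in \langle T \rangle$; this is the substantive step. Write $\widehat{\varphi} = \mathrm{Ad}_{w}$ for a unitary $w \in \mathcal{A}$. From $\widehat{\varphi}(u) = u$ we get $wu = uw$, so if $w = \sum_{n} w_{n}u^{n}$ with $w_{n} \in C(X)$ its Fourier coefficients, each $w_{n}$ is $T$-invariant, hence constant by minimality; thus $w$ lies in the copy of $C^{*}(u) \cong C(S^{1})$ inside $\mathcal{A}$. Then for $g \in C(X)$ one computes $wgw^{*} = \sum_{k}\big(\sum_{n} w_{n}\bar{w}_{n-k}\,(g\circ T^{-n})\big)u^{k}$, and the requirement $wgw^{*} \in C(X)$ for all $g$, using that $(X,T)$ has no periodic points, forces $w_{n}\bar{w}_{n-k} = 0$ for all $n$ and all $k \neq 0$; hence the expansion of $w$ collapses to a single term $w = \lambda u^{n}$ with $\lambda \in S^{1}$. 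Therefore $\widehat{\varphi} = \mathrm{Ad}_{u^{n}}$, and restricting to $C(X)$ gives $g\circ\varphi^{-1} = u^{n}gu^{-n} = g\circ T^{-n}$ for all $g$, so $\varphi = T^{n} \in \langle T \rangle$ and $[f] = [\Psi(\varphi)] = 1$ in $\mathcal{M}(T)$. The bookkeeping in the first two paragraphs --- identifying the $K_{0}$-action of $X_{\phi}$ with $\pi_{T}([f])$, and computing $X_{\varphi}$ for $(\varphi,X,X)$ --- is routine given the bi-modules $P_{C}$ and $\mathcal{A}_{D}^{\phi^{*}}$ already introduced, so the one genuinely delicate point is this last normalizer computation inside the crossed product.
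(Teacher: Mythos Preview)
Your argument is correct and follows the same overall strategy as the paper: trivial Picard class $\Rightarrow$ trivial action on $K_{0}(\mathcal{A})\cong\mathcal{G}_{T}$ $\Rightarrow$ (via Theorem~\ref{thm:coinvariantskernel}) $[f]\in\Im\Psi$, and then one shows that $\widehat\varphi$ inner forces $\varphi\in\langle T\rangle$. The only genuine difference is in this last step. The paper quotes a known structural fact (Putnam's Lemma~5.1, cf.\ the discussion before Prop.~2.4 in \cite{GPSfullgroup}): any inner automorphism normalizing $C(X)$ acts on $C(X)$ via an element of the topological full group, so $\varphi(x)=T^{\delta(x)}x$ for some continuous $\delta$; combined with $\varphi T=T\varphi$ and minimality this gives $\varphi=T^{n}$. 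You instead exploit the additional information $\widehat\varphi(u)=u$ to first force $w$ into $C^{*}(u)$, and then use aperiodicity to collapse $w$ to a monomial $\lambda u^{n}$. Your route is more self-contained but needs a little care you glossed over: the passage from ``each Fourier coefficient $w_{n}$ is constant'' to ``$w\in C^{*}(u)$'' requires a Fej\'er/Ces\`aro approximation (the formal series $\sum w_{n}u^{n}$ need not converge in norm), and the implication ``$\sum_{n}w_{n}\bar w_{n-k}(g\circ T^{-n})=0$ for all $g$ forces each summand to vanish'' is most cleanly handled by instead comparing Fourier coefficients in the identity $wg=(wgw^{*})w$, which yields $c_{n}(g\circ T^{-n})=h\,c_{n}$ for every $n$, so two nonzero $c_{n}$'s would give $T^{n_{1}-n_{2}}=\id$.
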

The group $\textnormal{Pic}\mathcal{A}$ acts on $K_{0}(\mathcal{A})$; in general, this action may be defined using the linking algebra, as in the discussion preceding Theorem 2.5 in \cite{Rieffelirrational}. Letting $\Aut(K_{0}(\mathcal{A}))$ denote the group of order-preserving automorphisms of $K_{0}(\mathcal{A})$ (not necessarily preserving $[1]$), there is then a homomorphism $\mathcal{P} \colon \textnormal{Pic}\mathcal{A} \to \Aut(K_{0}(\mathcal{A}))$. The groups $\mathcal{G}_{T}$ and $K_{0}(\mathcal{A})$ are isomorphic (see \cite[Theorem 1.1]{Putnamcpalgebras}), and upon identifying $\mathcal{G}_{T}$ and $K_{0}(\mathcal{A})$, the composition map $\mathcal{P} \circ \Theta_{\mathcal{M}}$
\begin{equation}\label{eqn:k0mapfactor}
\begin{gathered}
\xymatrix{
\mathcal{M}(T) \ar[r]^{\Theta_{\mathcal{M}}} \ar[dr]_{\mathcal{P} \circ \Theta_{\mathcal{M}}} & \textnormal{Pic}\mathcal{A} \ar[d]^{\mathcal{P}} \\
 & \Aut(K_{0}(\mathcal{A})) \\
 }
\end{gathered}
\end{equation}
agrees with $\pi_{T}$.\\

\begin{proof}[Proof of Proposition \ref{prop:picinjectivity}]
If $[f]$ is in the kernel of $\Theta_{M}$ then it is in the kernel of $\mathcal{P} \circ \Theta_{M}$. It follows from the above discussion that $[f]$ is then in the kernel of $\pi_{T}$, so by Theorem \ref{thm:coinvariantskernel}, $[f] = \Psi(\alpha)$ for some $\alpha \in \Aut(T)$. The composition $\Aut(T) \to \mathcal{M}(T) \to \textnormal{Pic}\mathcal{A}$ coincides with the composition map $\Aut(T) \to \textnormal{Aut}\mathcal{A} \to \textnormal{Pic}A$. If $\Theta_{\mathcal{M}} \Psi (\alpha)$ is trivial in $\textnormal{Pic}\mathcal{A}$, then $\alpha$ gives rise to an inner automorphism of $\mathcal{A}$. Since $\alpha_{*}$ preserves $C(X)$ in $\mathcal{A}$, there exists some continuous function $\delta \colon X \to \mathbb{Z}$ such that for $g \in C(X)$, $g \circ \alpha = \alpha_{*}(g) = g(T^{-\delta(x)}x)$ (this follows from \cite[Lemma 5.1]{Putnamcpalgebras}; for details, we refer the reader to the discussion preceding Proposition 2.4 in \cite{GPSfullgroup}). But this implies $\alpha$ maps a $T$-orbit to itself (in fact every $T$-orbit), and hence must be in $\langle T \rangle$, giving $[f] = [\alpha] = 1$ in $\mathcal{M}(T)$.
\end{proof}

\begin{remark}
Proposition \ref{prop:picinjectivity} also holds in the case where $(X,T)$ is a (non-trivial) irreducible shift of finite type. This follows from \eqref{eqn:k0mapfactor}, and injectivity of the map $\mathcal{M}(T) \to \Aut(K_{0}(\mathcal{A}))$, which itself can be deduced using Corollary 3.3 in \cite{BCMCG}.
\end{remark}
Recall by \cite[Prop.3.1]{BGRmorita} the kernel of the map $\Aut\mathcal{A} \to \textnormal{Pic}\mathcal{A}$ is the subgroup $\textnormal{Inn}\mathcal{A}$ of inner automorphisms of $A$, giving  an injective map $\textnormal{Out}\mathcal{A} \to \textnormal{Pic}\mathcal{A}$. It follows from Theorem \ref{thm:coinvariantskernel} that
\begin{equation}
\Im \Theta_{\mathcal{M}} \cap \textnormal{Out}\mathcal{A} = \Im \large(\Psi \colon \Aut(T) / \langle T \rangle \to \textnormal{Out}\mathcal{A}\large)
\end{equation}
It would be interesting to know how large the image of $\mathcal{M(T)}$ is under the composition
\begin{equation}
\mathcal{M}(T) \xrightarrow{\Theta_{\mathcal{M}}} \textnormal{Pic}\mathcal{A} \to \textnormal{Pic}\mathcal{A} / \textnormal{Out}\mathcal{A}
\end{equation}
and how it relates to the work in \cite{Nawatafundgroup}, \cite{NWfundgroup}.

\bibliographystyle{plain}
\bibliography{sskymcgarxiv}

\end{document}